\newtheorem{Theorem}{Theorem}[section]
\newtheorem*{Theorem A}{Theorem A}
\newtheorem{Definition}[Theorem]{Definition}
\newtheorem{Lemma}[Theorem]{Lemma}
\newtheorem{Sublemma}[Theorem]{Sublemma}
\newtheorem*{Remark}{Remark}
\newtheorem{Corollary}[Theorem]{Corollary}
\newtheorem*{Claim}{Claim}
\newtheorem*{Acknowledgements}{Acknowledgements}
 \def\NN{{\mathbb N}} 
 \def\RR{{\mathbb R}}
\def\sv{{\rm sv}}
   \def\cN{{\cal N}} 
   \def\cP{{\cal P}}
\def\dim{\operatorname{dim}}
\def\Sing{\operatorname{Sing}}
\def\supp{\operatorname{supp}}
\begin{document}

\title{On the singular hyperbolicity of star flows}

\author{Yi Shi \and Shaobo Gan \and Lan Wen\footnote{2010 Mathematics Subject Classification: 37D30, 37D50
\newline Key words and phrases: Singular hyperbolicity, star flow, Lyapunov stable class, shadowing
}}
\date{}

\maketitle

\begin{abstract}
We prove for a generic star vector field $X$ that if, for every
chain recurrent class $C$ of $X$, all singularities in $C$ have the
same index, then the chain recurrent set of $X$ is singular
hyperbolic. We also prove that every Lyapunov stable chain recurrent
class of a generic star vector field is singular hyperbolic. As a
corollary, we prove that the chain recurrent set of a generic
4-dimensional star flow is singular hyperbolic.
\end{abstract}

\section{Introduction}

Let $M^d$ be a $d$-dimensional $C^\infty$ compact Riemannian
manifold without boundary. Denote by ${\cal X}^1(M^d)$ the space of
$C^1$ vector fields on $M^d$, endowed with the $C^1$ topology. A
vector field $X\in{\cal X}^1(M^d)$ generates a $C^1$ flow
$\phi_t=\phi^X_t$ on $M^d$, as well as the tangent flow $\Phi_t={\rm
d}\phi_t$ on $TM^d$. Denote by ${\rm Sing}(X)$ the set of
singularities of $X$, and ${\rm Per}(X)$ the set of periodic points
of $X$. A singularity or a periodic orbit of $X$ are both called a
{\em critical orbit} or a {\em critical element} of $X$.

A compact invariant set $\Lambda$ of $X$ is {\em hyperbolic} if there are two constants
 $C\ge 1, \lambda>0$, and a continuous $\Phi_t$-invariant splitting
$$
T_\Lambda M^d= E^s\oplus \langle X\rangle\oplus E^u
$$
such that for every $x\in \Lambda$ and $t\ge 0$,
\begin{eqnarray*}
\|\Phi_t|_{E^s(x)}\|&\le& C{\rm e}^{-\lambda t},\\
\|\Phi_{-t}|_{E^u(x)}\|&\le& C{\rm e}^{-\lambda t}.
\end{eqnarray*}
Here  $\langle X(x)\rangle$ denotes the space spanned by  $X(x)$, which  is
0-dimensional if $x$ is a singularity, or 1-dimensional if $x$ is
regular. If $\Lambda$ consists of a critical element, denote  the
{\it index} of $\Lambda$ by ${\rm Ind}(\Lambda)={\rm dim} E^s$.

Let $\phi_t$ be the flow generated by a vector field $X$. For any
$\varepsilon>0, T>0$, a finite sequence $\{x_i\}_{i=0}^n$ on $M$ is
called \emph{$(\varepsilon,T)$-chain} of $X$ if there are $t_i\ge T$
such that $d(\phi_{t_i}(x_i),x_{i+1})<\varepsilon$ for any $0\le
i\le n-1$. For $x,y\in M^d$, one says that $y$ is \emph{chain
attainable} from $x$ if there exists $T>0$ such that for any
$\varepsilon>0$, there is an $(\varepsilon,T)$-chain
$\{x_i\}_{i=0}^n$ with $x_0=x$ and $x_n=y$. If $x$ is chain
attainable from itself, then $x$ is called a \emph{chain recurrent
point}. The set of chain recurrent points is called \emph{chain
recurrent set} of $X$, denoted by ${\rm CR}(X)$.

Chain attainability is a closed equivalence relation on ${\rm
CR}(X)$. For each $x\in{\rm CR}(X)$, the equivalence class $C(x)$
(which is compact) containing $x$ is called the chain recurrent
class of $x$. A chain recurrent class is called {\it trivial} if it
consists of a single critical element. Otherwise it is called {\it
nontrivial}. Since every hyperbolic critical element $c$ of $X$ has
a well-defined continuation $c_Y$ for $Y$ close to $X$, the chain
recurrent class $C(c)$ also has a well-defined continuation $C(c_Y,
Y)$.

A compact invariant set $\Lambda$ is called {\it chain transitive}
if for every pair of points $x,y\in \Lambda$,  $y$ is chain
attainable from $x$, where all chains are chosen in $\Lambda$. Thus
a chain recurrent class is just a maximal chain transitive set, and
every chain transitive set is contained in a unique chain recurrent
class.

A vector field $X\in {\cal X}^1(M^d)$ is called a {\em star vector
field} or a {\em star flow}, if it satisfies the {\em star
condition}, i.e.,  there exists a $C^1$ neighborhood $\cal U$ of $X$
such that every critical element of every $Y\in \cal U$ is
hyperbolic. The set of $C^1$ star vector field on $M^d$ is denoted
by ${\cal X}^*(M^d)$.

The notion of star system came up from the study of the famous
stability conjecture. Recall that a classical theorem of Smale
\cite{Sma70} (for diffeomorphisms) and Pugh-Shub \cite{PS70} (for
flows) states that Axiom A plus the no-cycle condition implies the
$\Omega$-stability. Palis and Smale \cite{PaSm70} conjectured that
the converse also holds, which has been known as the
$\Omega$-stability conjecture. In the study of the conjecture,
Pliss, Liao and Ma\~n\'e noticed an important condition called  (by
Liao) the {\it star condition}. As defined above, the star condition
looks quite weak because, though involving perturbations, it
concerns critical elements only, and the hyperbolicity considered is
in an individual but not uniform way. Indeed, the $\Omega$-stability
implies the star condition easily (Franks \cite{Fra71} and Liao
\cite{Lia79}). Thus whether the star condition could give back Axiom
A plus the no-cycle condition became a striking problem, raised by
 Liao \cite{Lia81} and Ma\~n\'e \cite{Man82}. An affirmative answer to the problem
would, of course, contain the $\Omega$-stability conjecture. For
diffeomorphisms, Aoki \cite{Aok92} and Hayashi \cite{Hay92} proved
that the star condition indeed implies Axiom A plus the no-cycle
condition. For flows, there are counterexamples if the flow has a
singularity. For instance, the geometric Lorenz attractor
\cite{Guc76}, which has a singularity, is a star flow but fails to
satisfy Axiom A. In fact, Liao \cite{Lia81} and Ma\~n\'e
\cite{Man82} raised this problem for {\it nonsingular} star flows,
and hence it was known as the {\it nonsingular star flow problem}.
The problem was solved by Gan-Wen \cite{GaW06} proving that
nonsingular star flows do satisfy Axiom A and the no-cycle
condition.

These give rise to a  new problem
--- to understand {\it singular star flows}, of which the geometric Lorenz attractor is one of the
typical models. Note that, while being not structurally stable, the
Lorenz attractor is quite robust under perturbations. Analytically,
 while being not hyperbolic, it exhibits quite some contractions and expansions. How to
describe such a dynamics? Morales, Pacifico and Pujals \cite{MPP04}
have given an appropriate notion about it, called singular
hyperbolicity, which is of central importance to the subject. Their
definition is for dimension 3, and the following higher dimensional
version can be found in \cite{ZGW08, MeM08}.

\begin{Definition} {\em (Positive singular hyperbolicity)}
Let $\Lambda$ be a compact invariant set of $X\in {\cal X}^1(M^d)$.
We say that $\Lambda$ is {\em positively singular hyperbolic} of $X$
if there are constants $C\ge 1$ and $\lambda>0$, and a continuous
invariant splitting
$$
T_\Lambda M=E^{ss}\oplus E^{cu}
$$
 w.r.t. $\Phi_t$ such that, for all $x\in \Lambda$ and $t\geq0$, the
 following three conditions are satisfied:

 $(1)$  $E^{ss}$ is $(C,\lambda)-$dominated by $E^{cu}$, i.e., $\|\Phi_t|_{E^{ss}(x)}\|\cdot\|\Phi_{-t}|_{E^{cu}(\phi_t(x)}\|\leq C{\rm e}^{-\lambda t}$.

 $(2)$  $E^{ss}$ is uniformly contracting, i.e., $\|\Phi_t|_{E^{ss}(x)}\|\leq C{\rm e}^{-\lambda t}$.

 $(3)$  $E^{cu}$ is {\em sectionally expanding}, i.e., for any 2 dimensional subspace $L\subset E^{cu}(x)$,
        $$
        |\det \left(\Phi_t|_L\right)|\ge C^{-1}{\rm e}^{\lambda t}.
        $$

\end{Definition}

 We say that
$\Lambda$ is {\em negatively singular hyperbolic} of $X$ if
$\Lambda$ is positively singular hyperbolic of $-X$.

 A union of
finitely many positively singular hyperbolic sets is positively
singular
  hyperbolic. Likewise for the negative case.

\begin{Definition} {\em (Singular hyperbolicity)}
We say that $\Lambda$ is {\em singular hyperbolic} of $X$ if it is
either positively singular hyperbolic of $X$, or negatively singular
hyperbolic of $X$, or a disjoint union of a positively singular
hyperbolic set of $X$ and a negatively singular hyperbolic set of
$X$.
\end{Definition}

Using the notion of singular hyperbolicity, the following conjecture
was formulated in \cite{ZGW08}:

\vskip 5mm \noindent{\bf Conjecture.\ }\cite{ZGW08} {\em For every
star vector field $X\in {\cal X}^*(M^d)$, the chain recurrent set
${\rm CR}(X)$ is singular hyperbolic and consists of finitely many
chain recurrent classes.} \vskip 5mm

\begin{Remark}
The conjecture is open even in 2-dimensional case.
\end{Remark}

In this paper we obtain some partial results to this conjecture. Let us
 say that a set $C$ {\it has a homogeneous index for
singularities} if all the singularities in $C$ have the same index.
Here are the main theorems of this paper.

\vskip 5mm \noindent{\bf Theorem A.\ } {\em There is a dense
$G_\delta$ set ${\mathcal G}_A\subset{\cal X}^*(M^d)$ such that, for
every $X\in{\mathcal G}_A$, if a chain recurrent class $C$ of $X$
has a homogeneous index for singularities, then $C$ is positively or negatively singular
hyperbolic.} \vskip 5mm

\begin{Remark}
The homogeneity requirement here looks restrictive.
However, we will prove that, for generic star vector fields, any
chain recurrent class can have at most two different indices for its
singularities.
\end{Remark}

A direct consequence  is the following

\vskip 5mm \noindent{\bf Theorem B.\ } {\em There is a dense
$G_\delta$ set ${\mathcal G}_B\subset{\cal X}^*(M^d)$ such that, for
every $X\in{\mathcal G}_B$, if every chain recurrent class $C$ of
$X$ has a homogeneous index for singularities, then the chain
recurrent set ${\rm CR}(X)$ is singular hyperbolic.} \vskip 5mm

The next theorem states that,  for generic star vector fields, if a
chain recurrent class is Lyapunov stable, then it is singular
hyperbolic.

\vskip 5mm \noindent{\bf Theorem C.\ } \footnote{\noindent Theorem C is claimed in \cite{AMS12} under the assumption of the homogeneous property, i.e., the conclusion of our Theorem \ref{Thm:Loc-Homoge}.}{\em There is a dense
$G_\delta$ set ${\mathcal G_C}\subset{\cal X}^*(M^d)$ such that, for
every $X\in{\mathcal G_C}$,  every Lyapunov stable chain recurrent
class of $X$ is positively singular hyperbolic.} \vskip 5mm

These theorems allow us to achieve the singular hyperbolicity of
chain recurrent set in the 4 dimensional case.

\vskip 5mm \noindent{\bf Theorem D.\ } {\em There is a dense
$G_\delta$ set ${\mathcal G_D}\subset{\cal X}^*(M^4)$ such that, for
every $X\in{\mathcal G_D}$, the chain recurrent set ${\rm CR}(X)$ is
singular hyperbolic.} \vskip 5mm

We also obtain a  description of ergodic measures of star flows,
which could be thought as the counterpart of hyperbolic measures for
diffeomorphisms. The following theorem is derived from a powerful
shadowing lemma of Liao \cite{Lia85} and the estimation of size of
invariant manifolds of Liao \cite{Lia89}.

\vskip 5mm \noindent{\bf Theorem E.\ } {\em If $\mu$ is an ergodic
measure of a star flow, then $\mu$ is a hyperbolic measure.} \vskip
5mm

Theorem A, C and D are proved  in Section 3 by admitting two
technical theorems that will be proved in Section 4 and 5
respectively. A detailed version of Theorem E will be proved in
Section 5 too.

\begin{Acknowledgements}
We are very grateful for the invaluable suggestions of the anonymous referee.
This work is partially supported by the Balzan research Project of J. Palis.
YS is supported by Chinese Scholarship Council.
SG is supported by 973 project 2011CB808002, NSFC 11025101 and 11231001.
LW is supported by NSFC 11231001.
\end{Acknowledgements}

\section{Preliminaries}

\subsection{Flows associated to a vector field}

Given $X\in{\cal X}^1(M^d)$, $X$ generates a $C^1$ flow $\phi_t:M^d\to M^d$, and the tangent flow
$\Phi_t={\rm d}\phi_t:TM^d\to TM^d$.


The usual linear Poincar\'e flow $\psi_t$ is defined as following. Denote the normal bundle of $X$ by
$$
{\cal N}={\cal N}^X=\bigcup_{x\in M^d\setminus\Sing (X)} {\cal N}_x,
$$
where ${\cal N}_x$ is the orthogonal complement of the flow direction $X(x)$, i.e.,
$$
{\cal N}_x=\{v\in T_xM^d: v\perp X(x)\}.
$$
Given $v\in {\cal N}_x$, $x\in M^d\setminus\Sing(X)$, $\psi_t(v)$ is the orthogonal projection of $\Phi_t(v)$ on
${\cal N}_{\phi_t(x)}$ along the flow direction, i.e.,
$$
\psi_t(v)=\Phi_t(v)-\frac{\langle \Phi_t(v), X(\phi_t(x))\rangle}{\|X(\phi_t(x))\|^2}X(\phi_t(x)),
$$
where $\langle\cdot, \cdot\rangle$ is the inner product on $T_xM$ given by the Riemannian metric.

We will need another flow $\psi_t^*:{\cal N}\to{\cal N}$, which is called {\em scaled linear Poincar\'e flow}. Given $v\in {\cal N}_x$, $x\in M^d\setminus\Sing(X)$,
$$
\psi_t^*(v)=\frac{\|X(x)\|}{\|X(\phi_t(x))\|}\psi_t(v)=\frac{\psi_t(v)}{\|\Phi_t|_{\langle X(x)\rangle}\|},
$$
where $\langle X(x)\rangle$ is the 1-dimensional subspace of $T_xM^d$ spanned by the vector $X(x)\in T_xM^d$. In a shadowing lemma of Liao (see Theorem \ref{Thm:Liao-Shadowing}), it is required some hyperbolicity with respect to this scaled linear Poincar\'e flow on the orbit arc.

The next lemma states the basic properties of star flows, proved in
\cite{Lia79}.

\begin{Lemma}\label{Lem:Basic-Property}(\cite{Lia79})
For any $X\in{\cal X}^*(M^d)$, there is a $C^1$ neighborhood $\cal U $ and numbers $ \eta>0$ and $T>0$ such that for any periodic orbit $\gamma$ of $Y\in\cal U$ with period $\pi(\gamma)\ge T$, if ${\cal N}_{\gamma}=N^s\oplus N^u$ is the hyperbolic splitting with respect to $\psi^Y_t$ then
\begin{itemize}
\item For every $x\in\gamma$ and $t\ge T$, one has
$$
\frac{\|\psi^Y_t|_{N^s(x)}\|}{m(\psi^Y_t|_{N^u(x)})}\le {\rm e}^{-2\eta t};
$$

\item For every $x\in\gamma$, then
$$
\prod_{i=0}^{[\pi(\gamma)/T]-1}\|\psi^Y_T|_{N^s(\phi^Y_{iT}(x))}\|\le {\rm e}^{-\eta\pi(\gamma)},
$$
$$
\prod_{i=0}^{[\pi(\gamma)/T]-1}m(\psi^Y_T|_{N^u(\phi^Y_{iT}(x))})\ge {\rm e}^{\eta\pi(\gamma)}.
$$
Here $m(A)$ is the mini-norm of $A$, i.e., $m(A)=\|A^{-1}\|^{-1}$.
\end{itemize}
\end{Lemma}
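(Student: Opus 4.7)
The plan is to argue by contradiction, using the star condition plus a Franks-type perturbation lemma to rule out each possible failure of uniformity. Since $X \in \cX^*(M^d)$, fix a small $C^1$ neighborhood $\cU_0$ of $X$ in which every critical element of every $Y\in\cU_0$ is hyperbolic; in particular each periodic orbit $\gamma$ of $Y$ admits a $\psi_t^Y$-invariant hyperbolic splitting $\cN_\gamma = N^s\oplus N^u$. What we need is that the contraction, expansion, and domination rates are bounded away from $0$ by constants depending only on a possibly smaller neighborhood $\cU\subset\cU_0$, not on $Y$ or $\gamma$.

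I would first prove the two product estimates (uniform contraction of $N^s$ and uniform expansion of $N^u$ along the whole period). Suppose instead there were a sequence $Y_n \to X$ in $\cU_0$ and periodic orbits $\gamma_n$ of $Y_n$ (with periods $\ge T_n\to\infty$) such that the geometric mean contraction $\bigl(\prod_{i=0}^{[\pi(\gamma_n)/T]-1}\|\psi^{Y_n}_T|_{N^s(\phi^{Y_n}_{iT}(\cdot))}\|\bigr)^{T/\pi(\gamma_n)}$ tends to $1$. Applying a Franks-type perturbation lemma adapted to flows (realizing a prescribed small perturbation of the linear Poincaré cocycle by an actual $C^1$ perturbation of the vector field in a thin tube around $\gamma_n$), I would push the weakest negative Lyapunov exponent of $\psi^{Y_n}_{\pi(\gamma_n)}$ on $\gamma_n$ to zero. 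The resulting $\widetilde Y_n$ lies in $\cU_0$ for large $n$ and has a non-hyperbolic periodic orbit, contradicting the star condition. The symmetric argument on $N^u$ (or on $-Y_n$) yields the expansion estimate.

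Next I would derive the domination estimate. Once contraction of $N^s$ and expansion of $N^u$ are uniformly controlled with exponents $\ge 3\eta$ say, the bounded-distortion Pliss selection argument produces, on every sufficiently long orbit segment, plenty of ``hyperbolic times'' where the bunch ratio $\|\psi_t^Y|_{N^s(x)}\|/m(\psi_t^Y|_{N^u(x)})$ is exponentially small. If the pointwise domination inequality failed uniformly along some $\gamma_n$, a Pliss argument would locate a long segment on which $N^s$ and $N^u$ are not dominated, and a rotational Franks perturbation mixing $N^u$ into $N^s$ on that segment could be chosen to produce a closed orbit whose Poincaré first-return map has a pair of eigenvalues of equal modulus --- again violating the star condition. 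Choosing $T$ large enough to absorb the multiplicative distortion constants, one obtains the rate $e^{-2\eta t}$ in the final statement.

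The main obstacle is the first step: translating the abstract linear-cocycle perturbation into an honest $C^1$ perturbation of the vector field $X$ supported in a tube around $\gamma$, without destroying $\gamma$ itself as a periodic orbit and while keeping the perturbed field inside $\cU_0$. This is the heart of Liao's technique: one must work with the linear Poincaré flow $\psi_t^Y$ on the transverse normal bundle, show that any desired small adjustment of its periodic cocycle is realizable by a tangent-flow perturbation of the form $d\phi_t^{\widetilde Y}$, and control the $C^1$ size of the resulting perturbation by the cocycle perturbation. Once this machinery is in place, the rest of the proof is a standard combination of Pliss's lemma and the three perturbation arguments sketched above.
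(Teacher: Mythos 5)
The paper does not prove this lemma; it is quoted directly from Liao \cite{Lia79} without proof, so there is no internal proof to compare your proposal against. That said, your outline is a faithful reconstruction of the Liao--Ma\~n\'e strategy that does underlie the cited result: assume a failure of the uniform estimate, take a sequence of periodic orbits realizing the failure, pass to an honest $C^1$ perturbation of the vector field via a flow version of Franks' lemma supported in a tube around the orbit, and deduce a non-hyperbolic critical element for some nearby field, contradicting the star condition.

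One place where your wording papers over the real technical content: when the product of the operator norms $\prod_i\|\psi^{Y_n}_T|_{N^s(\phi_{iT}(x_n))}\|$ is close to $1$, this does \emph{not} directly say the weakest stable Lyapunov exponent of the return cocycle is close to $0$ (the composed map can contract much faster than the product of top singular values suggests). The classical argument inserts small rotations between consecutive pieces of the cocycle so as to align the ``most-expanded'' directions, making the composed map genuinely track that product; only after this alignment does the near-neutral eigenvalue appear, and then a final small perturbation makes it exactly of modulus one. Your phrase ``push the weakest negative Lyapunov exponent to zero'' compresses exactly this step, which, together with the tube-perturbation machinery you flag as the main obstacle, is the true heart of Liao's proof. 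Similarly, the domination estimate is usually obtained by composing such rotations to transfer a weakly-expanded $N^u$ direction into $N^s$, not merely by a Pliss-time selection; Pliss's lemma is typically used downstream (e.g.\ in the paper's Lemma~\ref{Lem:Index-Estimation}), not to produce the domination constant itself. With these adjustments understood, your approach is the standard one and would succeed if carried out in full.
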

Let $E$ be a finitely dimensional vector space. Denote $\wedge^2 E$ the second exterior power of $E$.
Given a linear isomorphism: $A:E\to F$ between finitely dimensional vector spaces $E$ and $F$, denote $\wedge^2 A: \wedge^2 E\to\wedge^2 F$ the linear isomorphism induced by $A$. Now the second item of last theorem has the following consequence:

\begin{Corollary}\label{Cor:2-Form-Basic-Property}
For any $X\in{\cal X}^*(M^d)$, there is a $C^1$ neighborhood $\cal U $ and numbers $ \eta>0$ and $T>0$ such that for any periodic orbit  $\gamma$ of $Y\in\cal U$ with period $\pi(\gamma)\ge T$, if ${\cal N}_{\gamma}=N^s\oplus N^u$ is the hyperbolic splitting with respect to $\psi^Y_t$,  $E^{cs}=N^s\oplus\langle X\rangle$ and $E^{cu}=N^u\oplus\langle X\rangle$ which are invariant subbundles of $\Phi^Y_t$, then we have for any $x\in\gamma$,
$$
\prod_{i=0}^{[\pi(\gamma)/T]-1}\|\wedge^2\Phi^Y_T|_{E^{cs}(\phi^Y_{iT}(x))}\|\le {\rm e}^{-\eta\pi(\gamma)},
$$
$$
\prod_{i=0}^{[\pi(\gamma)/T]-1}m(\wedge^2\Phi^Y_T|_{E^{cu}(\phi^Y_{iT}(x))})\ge {\rm e}^{\eta\pi(\gamma)}.
$$
\end{Corollary}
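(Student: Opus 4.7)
The plan is to deduce the corollary from Lemma~\ref{Lem:Basic-Property} by a direct calculation on $2$-vectors. At every regular point $y$ the orthogonal decomposition $E^{cs}(y)=N^s(y)\oplus\langle X(y)\rangle$ induces the orthogonal splitting
\[
\wedge^2 E^{cs}(y)\;=\;\bigl(\langle X(y)\rangle\wedge N^s(y)\bigr)\,\oplus\,\wedge^2 N^s(y).
\]
Since $\Phi^Y_T$ sends $X(y)$ to $X(\phi^Y_T(y))$, the first summand is $\wedge^2\Phi^Y_T$-invariant and $\wedge^2\Phi^Y_T|_{E^{cs}(y)}$ is block upper triangular in this splitting. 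Writing $\Phi^Y_T(v)=\psi^Y_T(v)+h(v)X(\phi^Y_T(y))$ for $v\in N^s(y)$, a direct computation shows that on the invariant summand the map sends $X(y)\wedge v$ to $X(\phi^Y_T(y))\wedge\psi^Y_T(v)$ (the $X\wedge X$ contribution cancels), with operator norm exactly $\|\Phi^Y_T|_{\langle X(y)\rangle}\|\cdot\|\psi^Y_T|_{N^s(y)}\|$; on the quotient $\wedge^2 N^s$ the induced map is $\wedge^2\psi^Y_T|_{N^s(y)}$, of norm at most $\|\psi^Y_T|_{N^s(y)}\|^2$.

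Combining the two diagonal blocks with a bound on the off-diagonal mixing (governed by the slip coefficients $h(v)$) I would establish a pointwise estimate
\[
\|\wedge^2\Phi^Y_T|_{E^{cs}(y)}\|\;\le\; C(T)\cdot\|\Phi^Y_T|_{\langle X(y)\rangle}\|\cdot\|\psi^Y_T|_{N^s(y)}\|,
\]
with a constant $C(T)$ independent of $\gamma$ and $y$. Taking the product over $i=0,\ldots,n-1$ with $n=[\pi(\gamma)/T]$, the factor $\prod_i\|\Phi^Y_T|_{\langle X(\phi^Y_{iT}(x))\rangle}\|$ telescopes to $\|X(\phi^Y_{nT}(x))\|/\|X(x)\|$; since on a periodic orbit $nT$ lies within $T$ of $\pi(\gamma)$, this ratio is bounded by a constant depending only on $T$ and $\cU$. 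The factor $\prod_i\|\psi^Y_T|_{N^s(\phi^Y_{iT}(x))}\|$ is bounded by $e^{-\eta_0\pi(\gamma)}$ via Lemma~\ref{Lem:Basic-Property}, and $C(T)^n\le C(T)^{\pi(\gamma)/T}$. Enlarging $T$ so that $(\log C(T))/T<\eta_0$ and setting $\eta:=\eta_0-(\log C(T))/T>0$ yields the desired $\prod_i\|\wedge^2\Phi^Y_T|_{E^{cs}}\|\le e^{-\eta\pi(\gamma)}$. The companion bound for $E^{cu}$ follows by applying the argument to $-X$---which is also a star vector field, and for which $E^{cu}$ of $X$ plays the role of $E^{cs}$ of $-X$---or, equivalently, by running the argument with the minimum-norm bounds from the second item of Lemma~\ref{Lem:Basic-Property}.

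The main technical obstacle is to control the slip coefficients $h(v)=\langle\Phi^Y_T(v),X(\phi^Y_T(y))\rangle/\|X(\phi^Y_T(y))\|^2$ well enough that $C(T)$ grows only sub-exponentially in $T$, so that $(\log C(T))/T$ can be made strictly smaller than $\eta_0$. The naive estimate $|h(v)|\le\|\Phi^Y_T\|/\|X(\phi^Y_T(y))\|$ allows $C(T)$ to grow like $e^{LT}$, which is not enough. A flow-adapted norm on $\wedge^2 E^{cs}$ that absorbs the $\|X\|$-scaling reduces the mixing term to the scaled linear Poincar\'e flow $\psi^{*,Y}_T$; the uniform estimates available for $\psi^{*,Y}$ on star flows then produce the needed uniform bound on $C(T)$ and close the argument.
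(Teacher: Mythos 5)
The algebraic skeleton of your proof is sound: the orthogonal splitting $\wedge^2 E^{cs}=\bigl(\langle X\rangle\wedge N^s\bigr)\oplus\wedge^2 N^s$ does give a triangular block structure for $\wedge^2\Phi^Y_T|_{E^{cs}}$, the norm of the invariant block is exactly $\|\Phi^Y_T|_{\langle X\rangle}\|\cdot\|\psi^Y_T|_{N^s}\|$, and the quotient block is $\wedge^2\psi^Y_T|_{N^s}$. The telescoping of $\prod_i\|\Phi^Y_T|_{\langle X(\phi_{iT}x)\rangle}\|$ and the invocation of Lemma~\ref{Lem:Basic-Property} to handle $\prod_i\|\psi^Y_T|_{N^s}\|$ are also the right moves.

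The gap is in the pointwise estimate $\|\wedge^2\Phi^Y_T|_{E^{cs}(y)}\|\le C(T)\,\|\Phi^Y_T|_{\langle X(y)\rangle}\|\,\|\psi^Y_T|_{N^s(y)}\|$ with a constant $C(T)$ that you can eventually make sub-exponential in $T$. You locate the obstacle in the off-diagonal slip coefficients $h(v)$, but there is a second, equally serious source of exponential growth which your plan does not address: the quotient block contributes $\|\psi^Y_T|_{N^s}\|^2$, so to absorb it into $C(T)\,\|\Phi^Y_T|_{\langle X\rangle}\|\,\|\psi^Y_T|_{N^s}\|$ you need $\|\psi^Y_T|_{N^s}\|\le C(T)\,\|\Phi^Y_T|_{\langle X\rangle}\|$, i.e.\ $\|\psi^{*,Y}_T|_{N^s}\|\le C(T)$. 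The only uniform bound available for the scaled linear Poincar\'e flow is $\|\psi^{*,Y}_T\|\le {\rm e}^{cT}$ with $c$ comparable to $\sup\|DY\|$, which is genuinely exponential in $T$ and generally much larger than the $\eta$ produced by Lemma~\ref{Lem:Basic-Property}. So $(\log C(T))/T$ does not go to zero, and ``enlarging $T$'' cannot close the argument. The ``flow-adapted norm'' you propose does not help either: rescaling the summand $\langle X\rangle\wedge N^s$ by $\|X\|^{-1}$ removes the telescoping factor from the invariant block, but it introduces a factor $\|X(\phi_T(y))\|^{-1}$ into the off-diagonal term (whose raw size is governed by $\|\Phi^Y_T|_{N^s}\|$, not by $\psi^{*,Y}_T$), and that factor is unbounded as regular orbits pass near singularities. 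In short, the strategy of a pointwise, per-step estimate with a $T$-controlled constant, followed by multiplying along the orbit, does not suffice here; as currently written, the proposal reduces the corollary to an unproved and, I believe, false uniform pointwise bound on the scaled Poincar\'e flow and on the slip coefficients.
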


\begin{Remark}
For simplicity, we will assume the constant $T=1$.
\end{Remark}

\subsection{$C^1$ connecting and generic results for flows}
We need the following two versions of connecting lemmas.
\begin{Lemma}\label{Lem:WX-Connecting}(\cite{WeX00})
For any vector field $X\in{\cal X}^1(M^d)$ and any neighborhood ${\cal U}$ of $X$, for any point $z\notin{\rm Per}(X)\cup{\rm Sing}(X)$, there exist $L>0, \rho>1, \delta_0>0$ such that for any $\delta\in (0, \delta_0]$,  for any $p$ and $q$ in $M\setminus \Delta$ ($\Delta=\cup_{0\le t\le L}\phi_t^X(B_\delta(z)$), if both the positive orbit of $p$ and the negative orbit of $q$ enter into $B_{\delta/\rho}(z)$, then there is $Y\in{\cal U}$ such that
\begin{itemize}
\item $q$ is on the positive orbit of $p$ with respect to the flow $\phi_t^Y$ generated by $Y$.

\item $Y(x)=X(x)$ for any $x\in M\setminus \Delta$.
\end{itemize}
\end{Lemma}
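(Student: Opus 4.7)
The plan is to prove this $C^1$ connecting lemma of Wen-Xia by combining a flow-box construction with Pugh's fundamental perturbation adapted to flows. Since $z$ is a regular, non-periodic point, $X$ admits a flow-box neighborhood of the orbit arc through $z$ of any prescribed length $L$: I would choose a small cross-section $\Sigma$ transverse to $X$ at $z$ and use the map $(x,t)\mapsto\phi_t^X(x)$ from $\Sigma\times[0,L]$ to trivialize the flow as a translation. The set $\Delta$ in the statement is then, up to reparametrization, the image of $(B_\delta(z)\cap\Sigma)\times[0,L]$ under this trivialization, and any $C^1$-small bump vector field supported in $\Delta$ yields a perturbation still in ${\cal U}$.

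Next I would establish the quantitative perturbation tool. Inside the flow box I write $Y=X+V$ where $V$ is a vertical bump in flow-box coordinates; the support and amplitude of $V$ are controlled so that $\|Y-X\|_{C^1}$ stays in a prescribed neighborhood ${\cal U}$, while the time-$\tau$ map of $Y$ applied to a point on $\Sigma\times\{0\}$ shifts its cross-sectional coordinate by an amount proportional to the amplitude times $\tau$. By stacking finitely many such disjoint bumps along the flow direction over the total length $L$, one can achieve a cross-sectional displacement up to $\delta$. Choosing $\rho>1$ large (depending only on ${\cal U}$ and local geometry at $z$) ensures that the reachable set inside $\Sigma$ under such admissible perturbations, starting from any point of $B_{\delta/\rho}(z)\cap\Sigma$, contains all of $B_\delta(z)\cap\Sigma$, uniformly in $\delta\le\delta_0$.

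With these tools, the connecting step is clean. Let $p_*$ be the first point where the forward $X$-orbit of $p$ enters $B_{\delta/\rho}(z)$ and $q_*$ the last point where the backward $X$-orbit of $q$ enters $B_{\delta/\rho}(z)$. Projecting along the flow into $\Sigma$, they give $\bar p_*,\bar q_*\in B_{\delta/\rho}(z)\cap\Sigma$; by the preceding step there is an admissible perturbation $Y\in{\cal U}$, supported in $\Delta$, whose flow sends $p_*$ to $q_*$ in time at most $L$. Since $Y=X$ off $\Delta$, the $\phi^Y$-orbit of $p$ coincides with the $X$-orbit up to $p_*$, performs the perturbed excursion to $q_*$ inside $\Delta$, and then continues as the $X$-orbit to reach $q$, proving the lemma.

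The main obstacle is the quantitative perturbation construction in the second step: one must simultaneously control the $C^1$-size of each elementary bump, the total displacement achievable, and the requirement that the disjoint bumps can be composed to cover every target in $B_\delta(z)\cap\Sigma$ while keeping the overall perturbation inside ${\cal U}$. Making the three constants $L,\rho,\delta_0$ depend only on ${\cal U}$ and on the local geometry of the orbit through $z$ — and not on $\delta$ — is the heart of the argument; naive gluing does not suffice, and one must use the careful tile-and-chain construction of Wen-Xia, with a pigeonhole selection of which stack of bumps to activate for each given pair $(\bar p_*,\bar q_*)$.
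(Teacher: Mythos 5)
The paper does not prove this lemma; it is quoted directly from Wen--Xia \cite{WeX00}, so there is no in-paper proof to compare against. Judged on its own merits, your reconstruction has a serious gap at the connecting step.

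Your argument culminates in the claim ``Since $Y=X$ off $\Delta$, the $\phi^Y$-orbit of $p$ coincides with the $X$-orbit up to $p_*$, performs the perturbed excursion to $q_*$, and then continues as the $X$-orbit to reach $q$.'' This is false in general. The hypothesis places $p$ and $q$ outside $\Delta$ and says the forward orbit of $p$ enters the \emph{inner} ball $B_{\delta/\rho}(z)$; it says nothing about the much larger tube $\Delta$. In the typical application (a chain-recurrent point, a dense orbit, etc.) the $X$-orbit of $p$ crosses $\Delta$ many times before it first reaches $B_{\delta/\rho}(z)$, and it may re-cross $\Delta$ between $p_*$ and $q$ as well. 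Any perturbation supported in $\Delta$ alters those earlier and later passages, so the $\phi^Y$-orbit of $p$ peels away from the $\phi^X$-orbit at the \emph{first} visit to $\Delta$, long before it reaches $p_*$. The same problem recurs on the backward side with $q$. This re-entry phenomenon is precisely the obstruction that made the $C^1$ connecting lemma an open problem until Hayashi, and it is the entire content of the Wen--Xia construction: one tiles the transversal, lists the successive returns of the forward orbit of $p$ and of the backward orbit of $q$ to the perturbation box, applies a pigeonhole argument to those \emph{returns} (not merely to where the bumps sit), and then cuts and re-glues segments of the two orbits across several returns so that the concatenated pseudo-orbit is a genuine orbit of a perturbed vector field. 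A single push from $p_*$ to $q_*$, however carefully tiled, cannot succeed, because it ignores all the other passages of the two orbits through $\Delta$.

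Your closing paragraph acknowledges that ``naive gluing does not suffice'' and invokes ``tile-and-chain'' and a ``pigeonhole selection,'' but you attribute the difficulty to achieving enough transverse displacement while staying $C^1$-small, i.e.\ to the quantitative size of the kick. That quantitative control is indeed needed (and it is where $\rho$ enters), but it is the secondary issue. The primary difficulty, which your third paragraph silently assumes away, is the interference between the perturbation and the multiple uncontrolled returns of both orbits to $\Delta$. Any correct proof must explicitly select a finite family of return times for $p$ and for $q$, and must construct the perturbation so that the resulting $\phi^Y$-orbit traverses a prescribed subfamily of those return segments in order. Without that mechanism, the final sentence of your argument does not hold, and the lemma is not proved.
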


The connecting lemma of chains is also true for all the star flows, since all the critical elements of star flows are hyperbolic (see \cite{BoC04}).
\begin{Lemma}\label{Lem:Pseudo-Connecting}(\cite{BoC04})
Let $X\in{\cal X}^*(M^d)$. For any $C^1$ neighborhood $\cal U$ of $X$ and $x,y\in M^d$, if y is chain attainable from x, then there exists $Y\in \cal U$ and $t>0$ such that $\phi^Y_t(x)=y$. Moreover, for every $k\ge 1$, let $\{x_{i,k}, t_{i,k}\}_{i=0}^{n_k}$ be a $(1/k, T)$-chain from $x$ to $y$ and denote by
$$
\Lambda_k=\bigcup_{i=0}^{n_k-1}\phi_{[0,t_{i,k}]}(x_{i,k}).
$$
Let $\Lambda$ be the upper Hausdorff limit of $\Lambda_k$, i.e., $\Lambda$ consists of points $z$ such that there exist $z_k\in \Lambda_k$ and $\lim_{k\to\infty}z_k=z$. Then for any neighborhood $U$ of $\Lambda$, there exists $Y\in {\cal U}$ with $Y=X$ on $M\setminus U$ and $t>0$ such that $\phi^Y_t(x)=y$.
\end{Lemma}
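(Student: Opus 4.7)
The plan is to adapt the Bonatti--Crovisier connecting lemma of pseudo-orbits to star flows, using the Wen--Xia connecting lemma (Lemma~\ref{Lem:WX-Connecting}) as the single-jump tool. The star condition replaces the genericity hypothesis present in the diffeomorphism version: every critical element of $X$ is hyperbolic in a $C^1$ neighborhood, so critical elements are finite in any compact set and admit well-defined local stable and unstable manifolds. This uniform hyperbolicity is what allows each jump to be closed by a perturbation supported in the prescribed neighborhood $U$ of $\Lambda$.

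First I would fix a $C^1$ neighborhood $\mathcal{V}\subset\mathcal{U}$ of $X$ small enough that a composition of arbitrarily many disjointly supported perturbations, each chosen from $\mathcal{V}$, still lies in $\mathcal{U}$. Since $\Lambda$ is the upper Hausdorff limit of the sets $\Lambda_k$, for $k$ large enough the entire pseudo-orbit $\{x_{i,k}, t_{i,k}\}$ sits inside $U$, and in particular every exit point $\phi^X_{t_{i,k}}(x_{i,k})$ and every landing point $x_{i+1,k}$ lie in $U$. It therefore suffices to close each of the finitely many jumps by a perturbation from $\mathcal{V}$ whose support lies in $U$ and is disjoint from the supports attached to the other jumps.

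Second, at each jump I would try to locate a point $z_i$ on the pseudo-orbit between $x_{i,k}$ and $x_{i+1,k}$ which is neither a singularity nor a periodic point. The Wen--Xia lemma then furnishes constants $L_i,\rho_i,\delta_{0,i}$ and a tube $\bigcup_{0\le t\le L_i}\phi^X_t(B_{\delta}(z_i))$ inside which a single $C^1$-small perturbation $Y_i\in\mathcal{V}$ concatenates the forward orbit of $x_{i,k}$ to a point whose backward orbit contains $x_{i+1,k}$. Taking $\delta$ small keeps this tube inside $U$ and disjoint from the tubes attached to the other jumps. Jumps that happen near a critical element $c$ of $X$ are handled by subdivision: using that the corresponding pseudo-orbit segment approaches $c$ along $W^s(c)$ and leaves along $W^u(c)$, I would split the single jump near $c$ into two jumps located at genuinely regular points of $W^s(c)$ and of $W^u(c)$. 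A Wen--Xia perturbation then attaches the incoming segment to $W^s_{\mathrm{loc}}(c)$ and another attaches $W^u_{\mathrm{loc}}(c)$ to the outgoing segment, so the new orbit passes by $c$ along its invariant manifolds (sliding along the periodic orbit in the periodic case, and passing arbitrarily close to the singularity in the singular case).

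The main obstacle is precisely the critical jumps near a hyperbolic singularity: the pseudo-orbit can come arbitrarily close to a Lorenz-type singularity, where the Wen--Xia hypothesis is violated, and the reduction to two regular jumps is delicate. Making this rigorous requires an inclination lemma for the hyperbolic critical element to guarantee that, once the perturbed forward orbit is placed on $W^s_{\mathrm{loc}}(c)$ and some later iterate on $W^u_{\mathrm{loc}}(c)$, the connecting piece of orbit stays inside $U$ and does not pick up extra jumps; the uniform hyperbolicity supplied by the star condition is exactly what makes the corresponding local stable and unstable laminations available. Once this step is in place the assembly is routine: a finite family of $C^1$-small, disjointly supported perturbations inside $U$ concatenates to a single $Y\in\mathcal{U}$ with $Y\equiv X$ on $M\setminus U$, and a suitable $t>0$ satisfies $\phi^Y_t(x)=y$, which yields both the existence statement and the support refinement.
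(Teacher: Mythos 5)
The paper does not prove Lemma~\ref{Lem:Pseudo-Connecting}; it is cited directly from Bonatti--Crovisier \cite{BoC04}, with the remark that the diffeomorphism version carries over to star flows because all critical elements are hyperbolic. So there is no in-paper proof to compare against, and your proposal must be judged on its own terms.

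There is a genuine gap, and it is not where you locate it. Your scheme assigns to each jump $i$ of the $(1/k,T)$-chain its own Wen--Xia tube $\Delta_i\subset U$, pairwise disjoint from the others, and then superposes the resulting perturbations. This cannot work in general. As $k\to\infty$ the number of jumps $n_k$ is unbounded, and nothing prevents the jump points $\phi_{t_{i,k}}(x_{i,k})$ from clustering inside an arbitrarily small region or from visiting the same region of $M$ many times; in that situation there simply do not exist pairwise disjoint tubes of any definite size, one per jump. Worse, the Wen--Xia lemma requires $p,q\notin\Delta$ and says nothing about what happens when the (old or new) orbit re-enters $\Delta$ at a later time --- but a chain-recurrent pseudo-orbit typically does return, and a return through a tube is redirected by the corresponding perturbation in an uncontrolled way, destroying the concatenation. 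These two facts --- unbounded, possibly accumulating jump count and recurrence through the support of an earlier perturbation --- are precisely the central obstacles that the Bonatti--Crovisier argument is built to handle. Their proof does not apply a connecting lemma once per jump; it tiles a neighborhood of $\Lambda$ by finitely many ``perturbation boxes'' (of a fixed combinatorial type, with a long-time closing property uniform in the box), performs a careful selection of a finite sub-family of intermediate points at which the pseudo-orbit is closed, and uses a single coordinated perturbation per box so that all entries and exits of the pseudo-orbit through that box are rewired consistently. Hyperbolicity of critical elements enters at the point of constructing boxes near singularities and periodic orbits, but it is an input to that machinery, not a substitute for it.

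Your paragraph identifying Lorenz-type singularities as ``the main obstacle'' and sketching a reduction to two regular jumps along $W^s_{\mathrm{loc}}(c)$ and $W^u_{\mathrm{loc}}(c)$ is a reasonable heuristic for the local passage near a hyperbolic critical element, and it is roughly how \cite{BoC04} builds perturbation boxes near such elements. But it is a secondary issue compared with the combinatorial one above, and it does not salvage the per-jump disjoint-tube scheme. As written, the proposal assumes away exactly the difficulty that makes the connecting lemma for pseudo-orbits a theorem rather than a routine corollary of the single-orbit connecting lemma.
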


\begin{Remark}
According to the proof of the above connecting lemma for chain (\cite{BoC04}), the conclusion can be strengthened as following:  for any neighborhood $U$ of $\Lambda$, and for any finitely many (hyperbolic) critical elements $c_i, i=1,2,\cdots, j$, there exist a neighborhood $V_i$ of $c_i (i=1,2,\cdots, j)$ and $Y\in {\cal U}$ with $Y=X$ on $(\cup_{i=1}^j V_j)\cup(M\setminus U)$ and $t>0$ such that $\phi^Y_t(x)=y$.
This strong version will be used in the proof of Lemma \ref{Lem:Expon-Sing}.
\end{Remark}

We need the following generic properties for star vector fields.

\begin{Lemma}\label{Lem:Generic-Property}
There is a dense $G_\delta$ set ${\mathcal G}\subset{\cal X}^*(M^d)$ such that for any $X\in{\mathcal G}$, one has
\begin{enumerate}
\item For every critical element $p$ of $X$, the chain recurrent class $C(p)=C(p_X,X)$ is continuous at $X$ in the Hausdorff topology.
\item If $p$ and $q$ are two different critical elements of $X$ with $C(p)=C(q)$, then there exists a $C^1$ neighborhood $\cal U$ of $X$ such that for any $Y\in{\cal U}$, one has $C(p_Y,Y)=C(q_Y,Y)$.
\item For any hyperbolic critical element $p$ of $X$, if $W^u(p)\subset C(p)$, then there is a $C^1$ neighborhood $\cal U$ of $X$ such that for any $Y\in{\cal U}$, $C(p_Y,Y)$ is Lyapunov stable.
\item For any nontrivial chain recurrent class $C$ of $X$, there exists a sequence of periodic orbits $Q_n$ such that $Q_n$ tends to $C$ in the Hausdorff topology.
\end{enumerate}
\end{Lemma}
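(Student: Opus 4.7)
The plan is to derive all four items from two ingredients: upper semi-continuity of the continuation map $Y\mapsto C(p_Y,Y)$ combined with Baire category, and the chain connecting lemma (Lemma~\ref{Lem:Pseudo-Connecting}). First I would establish the basic semi-continuity: for any $X_0\in{\cal X}^*(M^d)$ and any hyperbolic critical element $p$ of $X_0$, on a $C^1$-neighborhood ${\cal U}$ where $p$ has continuation $p_Y$, the assignment $Y\mapsto C(p_Y,Y)$ is upper semi-continuous in the Hausdorff metric on compact subsets of $M^d$. This is a routine pseudo-orbit limiting argument: any $(\e,T)$-chain realizing $y\in C(p_X,X)$ arises as the limit of $(\e,T)$-chains realizing $y_n\in C(p_{Y_n},Y_n)$ for $Y_n\to X$ and $y_n\to y$. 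Item (1) then follows from Fort's theorem (every upper semi-continuous map from a Baire space into a metric space of compact sets is continuous at a residual set). Fixing a countable dense $\{X_\ell\}\subset{\cal X}^*(M^d)$ and enumerating the (at most countably many) hyperbolic critical elements of each $X_\ell$ yields, via a gluing of Fort's residual sets over the various ${\cal U}_{X_\ell,p}$, a single dense $G_\delta$ set ${\cal G}_1\subset{\cal X}^*(M^d)$.

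For item (2), fix a pair $(p,q)$ of hyperbolic critical elements of some $X_\ell$ persisting on ${\cal U}$, and consider the integer-valued function $n_{p,q}(Y)=\#\{C(p_Y,Y),C(q_Y,Y)\}\in\{1,2\}$. The set $\{n_{p,q}=2\}$ is open, because if $C(p_Y,Y)$ and $C(q_Y,Y)$ are disjoint compact sets then the upper semi-continuity of each keeps them disjoint under small $C^1$-perturbations. Thus $n_{p,q}$ is lower semi-continuous with discrete range, hence locally constant on a residual subset of ${\cal U}$. Intersecting over the countably many such pairs $(p,q)$ coming from the family $\{X_\ell\}$ produces a dense $G_\delta$ set ${\cal G}_2$.

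For item (4), I would adapt the standard periodic-approximation machinery (Crovisier's method for $C^1$ diffeomorphisms) to the flow setting using Lemma~\ref{Lem:Pseudo-Connecting}. Fix a countable basis $\{V_n\}$ of the topology of $M^d$. For each finite tuple $\bar V=(V_{n_1},\ldots,V_{n_k})$ and each integer $m\ge 1$, define ${\cal W}_{\bar V,m}$ to consist of those $X$ such that whenever a nontrivial chain recurrent class $C$ of $X$ meets each $V_{n_i}$, there is a periodic orbit $Q$ of $X$ with $Q\cap V_{n_i}\neq\emptyset$ for every $i$ and $Q$ contained in the $1/m$-neighborhood of $C$. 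This set is open by the continuation of hyperbolic periodic orbits (the star condition guarantees hyperbolicity) and dense because from a long $(\e,T)$-chain inside $C$ visiting each $V_{n_i}$ one can close up to a periodic orbit of a nearby $Y$ by Lemma~\ref{Lem:Pseudo-Connecting}. The countable intersection yields ${\cal G}_4$; for $X\in{\cal G}_4$ and any nontrivial $C$, successively finer finite covers of $C$ by basic open sets produce a sequence $Q_n$ of periodic orbits with $d_H(Q_n,C)\to 0$.

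Item (3) is the most delicate and reuses everything else; I would work in ${\cal G}_3:={\cal G}_1\cap{\cal G}_2\cap{\cal G}_4$. Given $X\in{\cal G}_3$ and $p$ hyperbolic with $W^u(p)\subset C(p)$: item (4) supplies periodic orbits $Q_n\subset C(p)$ converging to $C(p)$ in Hausdorff distance, and item (2) makes the identifications $C(Q_n)=C(p)$ robust under $C^1$-perturbation. Combining the $C^1$-continuation of the local unstable manifold $W^u_{\mathrm{loc}}(p_Y)$ with item (1) gives $W^u_{\mathrm{loc}}(p_Y)\subset C(p_Y,Y)$ for $Y$ close to $X$, and iterating forward along the flow together with the ``trapping'' by the persisting periodic orbits $Q_n(Y)\subset C(p_Y,Y)$ upgrades this to the global inclusion $W^u(p_Y)\subset C(p_Y,Y)$. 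Lyapunov stability of $C(p_Y,Y)$ then follows from the standard fact that a chain recurrent class equal to the closure of an unstable manifold admits a filtrating attracting neighborhood. The main obstacle I expect is precisely this upgrade from local to global control of $W^u(p_Y)$ under perturbation: the local continuation is far from sufficient, and item (4) is essential in order to pin $W^u(p_Y)$ inside $C(p_Y,Y)$ at every scale.
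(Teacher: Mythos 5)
The paper does not actually prove this lemma: items (1)--(3) are cited from Bonatti--Gan--Yang \cite{GaY11} and item (4) from Crovisier \cite{Cro06}, both of which are substantial standalone works, so there is no ``paper's own proof'' to compare against. Your outlines for items (1) and (2) are sound in spirit --- upper semi-continuity of $Y\mapsto C(p_Y,Y)$, Fort's theorem, and a countable Baire gluing over a dense sequence of reference vector fields and their critical elements is indeed the standard mechanism (though note your description of upper semi-continuity is phrased backwards: the correct direction is that accumulation points of $C(p_{Y_n},Y_n)$ as $Y_n\to X$ land in $C(p_X,X)$, not that chains for $X$ arise as limits of chains for $Y_n$).

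Items (3) and (4), however, have genuine gaps. For (4), the set ${\cal W}_{\bar V,m}$ you define is not obviously open: the clause ``whenever a nontrivial chain recurrent class of $X$ meets each $V_{n_i}$'' is not a robust hypothesis, since chain recurrent classes can merge, split, or newly meet the $V_{n_i}$ under $C^1$-perturbation (upper semi-continuity of ${\rm CR}$ controls only one side), so a $Y$ near $X$ may acquire a class meeting each $V_{n_i}$ with no witnessing periodic orbit; the continuation of hyperbolic periodic orbits does not address this. Crovisier's actual argument is an intricate multi-scale scheme and is far more than one application of Lemma~\ref{Lem:Pseudo-Connecting} plus continuation. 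For (3), you correctly identify the crux --- promoting $W^u_{\rm loc}(p_Y)\subset C(p_Y,Y)$ to the global inclusion robustly in $Y$ --- but the proposed ``trapping by $Q_n(Y)$'' does not close it: item (4) provides $Q_n$ converging to $C(p)$ in Hausdorff topology for the \emph{single} generic $X$ (and does not even assert $Q_n\subset C(p)$, nor that $C(Q_n)=C(p)$), not a robustly controlled family $Q_n(Y)\subset C(p_Y,Y)$ that could pin $W^u(p_Y)$ at every scale for all nearby $Y$. That robustness is exactly the nontrivial content of \cite{GaY11}; it should be imported, not reproved in a few lines.
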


\begin{Remark}
Item 1, 2 and 3 is from \cite{GaY11} and item 4 is from \cite{Cro06}.
\end{Remark}
\section{Reducing the main theorems to two technical results}

In this section we reduce the proofs of the main theorems to two
technical theorems, Theorem 3.4 and 3.5. First we define the saddle
value of a singularity, a crucial value for the analysis of
singularities whose chain recurrent class is nontrivial.

\begin{Definition}\label{Def:Saddle-Value}
Let $X\in{\cal X}^1(M^d)$ and $\sigma$ a hyperbolic singularity of $X$. Assume the Lyapunov exponents of $\Phi_t(\sigma)$ are
$$
\lambda_1\leq\cdots\leq\lambda_s<0<\lambda_{s+1}\leq\cdots\leq\lambda_d,
$$
then the saddle value ${\rm sv}(\sigma)$ of $\sigma$ is defined as
$$
{\rm sv}(\sigma)=\lambda_s+\lambda_{s+1}.
$$
\end{Definition}

\begin{Definition}\label{Def:Lorenz-like}
Let $X\in{\cal X}^1(M^d)$ and $\sigma$ a hyperbolic singularity of $X$. Assume that $C(\sigma)$ is nontrivial and the Lyapunov exponents of $\Phi_t(\sigma)$ are
$$
\lambda_1\leq\cdots\leq\lambda_s<0<\lambda_{s+1}\leq\cdots\leq\lambda_d.
$$
We say $\sigma$ is {\em Lorenz-like}, if the following conditions are satisfied:
\begin{itemize}
    \item $\sv(\sigma)\neq0$.
    \item If $\sv(\sigma)>0$, then  $\lambda_{s-1}<\lambda_s$, and $W^{ss}(\sigma)\cap C(\sigma)=\{\sigma\}$. Here $W^{ss}(\sigma)$ is the invariant manifold corresponding to the bundle $E^{ss}_\sigma$ of the partially hyperbolic splitting $T_\sigma M=E^{ss}_\sigma\oplus E^{cu}_\sigma$, where $E^{ss}_\sigma$ is the invariant space corresponding to the Lyapunov exponents $\lambda_1, \lambda_2, \cdots, \lambda_{s-1}$ and $E^{cu}_\sigma$ corresponding to the Lyapunov exponents $\lambda_s, \lambda_{s+1}, \cdots, \lambda_{d}$.

    \item If $\sv(\sigma)<0$, then  $\lambda_{s+1}<\lambda_{s+2}$, and $W^{uu}(\sigma)\cap C(\sigma)=\{\sigma\}$. Here $W^{uu}(\sigma)$ is the invariant manifold corresponding to the bundle $E^{uu}_\sigma$ of the partially hyperbolic splitting $T_\sigma M=E^{cs}_\sigma\oplus E^{uu}_\sigma$, where $E^{cs}_\sigma$ is the invariant space corresponding to the Lyapunov exponents $\lambda_1, \lambda_2, \cdots, \lambda_{s+1}$ and $E^{uu}_\sigma$ corresponding to  the Lyapunov exponents $\lambda_{s+2}, \lambda_{s+3}, \cdots, \lambda_{d}$.
\end{itemize}
\end{Definition}

\begin{Remark}
If the singularity $\sigma$ is Lorenz-like, then the splitting (say, $T_\sigma M=E^{ss}_\sigma\oplus E^{cu}_\sigma$ in the case ${\rm sv}(\sigma)>0$) is a singular hyperbolic splitting over $\{\sigma\}$.
\end{Remark}

Although in the definition of Lorenz-like singularity (and singular hyperbolicity) it is allowed that $E^{uu}_\sigma$ is trivial (for ${\rm sv}(\sigma)<0$), i.e., $E^{uu}_\sigma=\{0\}$, we will show that for $C^1$ generic star vector field $X$, if $C(\sigma)$ is nontrivial, then $E_\sigma^{uu}$ should be nontrivial (see Theorem \ref{Thm:degenerate} below). We need the important Main Theorem of Liao in \cite{Lia89} (see \cite{YaZ13} for a generalization):

\begin{Theorem}(\cite[Main Theorem]{Lia89})\label{Thm:finite}
Given $X\in{\cal X}^*(M)$, there exists a neighborhood $\cal U$ of $X$ such that
$$
\sup_{Y\in{\cal U}}\#\{P\subset M: P \mbox{ is a periodic sink of } Y\}<\infty.
$$
\end{Theorem}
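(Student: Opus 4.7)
The plan is to bound, uniformly for $Y$ in a sufficiently small $C^1$ neighborhood $\mathcal{U}$ of $X$, the number of periodic sinks of $Y$. I fix the neighborhood $\mathcal{U}$ and constants $\eta>0$, $T>0$ from Lemma \ref{Lem:Basic-Property}. A periodic sink is a hyperbolic periodic orbit of index $d-1$, so its normal bundle coincides with the stable subbundle $N^s$ of the linear Poincar\'e flow, and $N^u=\{0\}$. The argument splits sinks according to whether their period is smaller than or at least $T$.

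For sinks of period $<T$, I use compactness together with the star condition. A sequence of such sinks $P_n$ of $Y_n\to X$ has a Hausdorff subsequential limit $\Lambda$ which is a compact invariant set of $X$ parametrized by a loop of total length $\le T$; hence $\Lambda$ is either a singularity or a periodic orbit. The singularity case is excluded because a hyperbolic singularity of $X$ has a small neighborhood containing no periodic orbit of any $Y$ close to $X$. If $\Lambda$ is a periodic orbit, then $\Lambda$ is hyperbolic by the star condition, and the continuation of $\Lambda$ in a $C^1$ neighborhood is locally unique; so all $P_n$ eventually coincide with this continuation and cannot be pairwise distinct. Consequently, after shrinking $\mathcal{U}$ if necessary, the number of sinks of period $<T$ of any $Y\in\mathcal{U}$ is bounded by a constant $N_1$.

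For sinks $P$ of period $\pi(P)\ge T$, I will exhibit a tubular neighborhood of $P$ contained in the basin $B(P)$ whose volume is uniformly bounded below. Since $\mathcal{N}_P=N^s$, the second item of Lemma \ref{Lem:Basic-Property} specializes, for any $x\in P$, to
$$
\prod_{i=0}^{[\pi(P)/T]-1}\|\psi^Y_T|_{\mathcal{N}(\phi^Y_{iT}(x))}\|\le e^{-\eta\pi(P)}.
$$
A Pliss-type selection applied to the logarithms of these factors produces a point $x_0\in P$ along which every forward subproduct of the scaled Poincar\'e-flow norms is dominated exponentially at rate $\eta/2$. Liao's stable-manifold estimate for the scaled linear Poincar\'e flow $\psi^*_t$, which is the technical core of \cite{Lia89}, then yields a stable disc $D^s(x_0)\subset \mathcal{N}_{x_0}$ of uniform radius $\delta>0$ transverse to the flow, whose points are exponentially attracted to $P$. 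Flowing $D^s(x_0)$ once around $P$ produces an open tubular neighborhood $U_P\subset B(P)$ of volume bounded below by a uniform constant $v_0>0$ depending only on $\mathcal{U}$, $\eta$, $T$, and the geometry of $M$. Since the basins of distinct sinks are disjoint and $M$ is compact, at most $\mathrm{vol}(M)/v_0$ long-period sinks of $Y$ can coexist.

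Combining the two regimes yields the uniform bound $N_1+\mathrm{vol}(M)/v_0$ on the number of sinks of any $Y\in\mathcal{U}$. The main obstacle is the uniform-size step for long-period sinks: one must show that $\delta$ depends only on the hyperbolicity constants $(\eta,T)$ of Lemma \ref{Lem:Basic-Property}, and not on the possibly enormous period $\pi(P)$. This is precisely why the scaled flow $\psi^*_t$, rather than $\psi_t$ or $\Phi_t$, is the right object: rescaling by $\|X\|$ neutralizes the slowdown that the orbit of $P$ may experience while passing near a singularity, so that the Pliss-selected subarcs genuinely drive a local graph transform with a size controlled only by the ambient hyperbolicity constants, rather than one that degenerates as $\pi(P)\to\infty$.
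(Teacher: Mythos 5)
The paper does not prove this theorem; it quotes it as the Main Theorem of \cite{Lia89}, so there is no in-text argument to compare your proposal against. Taken on its own terms, your short-period compactness step is sound, and the overall scheme for long periods (uniform-volume trapping region for each sink, disjoint basins, count by volume) is the classical Pliss-type approach. The gap sits in the long-period step, and it falls exactly on the point that makes the singular case hard.

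You assert that Liao's estimate yields a stable disc $D^s(x_0)\subset\cN_{x_0}$ of \emph{uniform} radius $\delta$. That is not what the estimate gives. Theorem \ref{Thm:Inv-Mfd-Size} produces a disc of radius $\delta\,|X(x_0)|$, and the remark following it in the paper stresses that for singular flows one can only expect the stable-manifold size to be \emph{proportional to the flow speed}. The rescaling by $\|X\|$ does not restore a uniform radius; it shifts the degeneracy from the hyperbolicity constants into a disc size that vanishes as $x_0\to\Sing(X)$. A periodic sink $P$ can pass arbitrarily close to a singularity, and nothing in your Pliss selection — applied to the scaled cocycle, which sees contraction rates but not flow speed — forces the chosen point $x_0$ to lie where $|X|$ is bounded below. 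Consequently the volume of the tube around $P$ carries an uncontrolled factor on the order of $|X(x_0)|^{d}$, and the count $\mathrm{vol}(M)/v_0$ collapses. To close the argument one must additionally show that a Pliss point can always be taken a uniform distance from $\Sing(X)$ — for instance, by bounding away from one the proportion of time a long-period sink can spend near singularities. That extra estimate is the substantive content of \cite{Lia89} and is absent from the sketch; the graph-transform/stable-manifold part, by contrast, is the routine part once the right point has been located.
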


\begin{Theorem}\label{Thm:degenerate}
There exists a dense $G_\delta$ subset ${\cal G}_0\subset {\cal X}^*(M)$ such that for any $X\in{\cal G}_0$ and any singularity $\sigma$ of $X$, if $T_\sigma M$ is sectional contracting or sectional expanding, then $C(\sigma)$ is trivial.
\end{Theorem}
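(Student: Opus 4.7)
Take $\mathcal{G}_0 := \mathcal{G}$, the generic set from Lemma \ref{Lem:Generic-Property}. After replacing $X$ with $-X$ (which swaps sectional contraction with sectional expansion), assume $T_\sigma M$ is sectionally contracting, i.e., $\lambda_{d-1}(\sigma)+\lambda_d(\sigma)<0$; this forces $\ind(\sigma)\ge d-1$. If $\ind(\sigma)=d$ then $\sigma$ is a sink and $C(\sigma)=\{\sigma\}$ automatically. Otherwise $\ind(\sigma)=d-1$ and $W^u(\sigma)$ is a $1$-dim curve; suppose for contradiction that $C(\sigma)$ is nontrivial. By Lemma \ref{Lem:Generic-Property}(4) I get periodic orbits $Q_n$ with $Q_n\to C(\sigma)$ in the Hausdorff topology. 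Since the flow slows down arbitrarily near $\sigma$, the total time $Q_n$ spends in any fixed neighborhood $U$ of $\sigma$ grows like $\log(1/d(Q_n,\sigma))$ while the time outside $U$ (between returns to a neighborhood of $\sigma$) remains bounded, so $\pi(Q_n)\to\infty$ and the fraction $\alpha_n$ of the period that $Q_n$ spends inside $U$ tends to $1$.

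The heart of the proof is to show that $Q_n$ is a periodic sink for all large $n$; this will produce infinitely many sinks and contradict Theorem \ref{Thm:finite}. Suppose instead that $\ind(Q_n)\le d-2$. Then $E^{cu}(Q_n)=N^u(Q_n)\oplus\langle X\rangle$ has dimension $\ge 2$, and Corollary \ref{Cor:2-Form-Basic-Property} yields the lower bound
$$
\prod_{i=0}^{\pi(Q_n)-1} m\bigl(\wedge^2\Phi_1|_{E^{cu}(\phi_i(p_n))}\bigr)\ge e^{\eta\pi(Q_n)}.
$$
On the other hand, sectional contraction at $\sigma$ provides $\lambda_0>0$ and $C_0\ge 1$ with $\|\wedge^2 e^{tA}\|_{T_\sigma M}\le C_0 e^{-\lambda_0 t}$, where $A=DX(\sigma)$. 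Fixing $t_0$ so large that $C_0 e^{-\lambda_0 t_0}<e^{-\lambda_0 t_0/2}$ and using continuity of $\wedge^2\Phi_t$, I choose a neighborhood $U$ of $\sigma$ such that whenever the orbit segment $\phi_{[0,t_0]}(p)$ lies in $U$ and $L\subset T_p M$ is any $2$-dim subspace, $|\det\Phi_{t_0}|_L|\le e^{-\lambda_0 t_0/2}$. Picking any $2$-plane inside $E^{cu}(p)$ and using the supermultiplicativity $m(\wedge^2\Phi_{t_0}(p))\ge\prod_{i=0}^{t_0-1}m(\wedge^2\Phi_1(\phi_i(p)))$ shows that each length-$t_0$ block of $Q_n$'s orbit inside $U$ contributes at most $e^{-\lambda_0 t_0/2}$ to the product above. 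Combined with the trivial bound $m(\wedge^2\Phi_1)\le K$ outside $U$, this gives
$$
\prod_{i=0}^{\pi(Q_n)-1}m\bigl(\wedge^2\Phi_1|_{E^{cu}(\phi_i(p_n))}\bigr)\le \exp\bigl(\pi(Q_n)\bigl[-\tfrac{\lambda_0}{2}\alpha_n+(1-\alpha_n)\log K\bigr]\bigr).
$$
As $\alpha_n\to 1$ the bracketed quantity tends to $-\lambda_0/2<0<\eta$, contradicting the lower bound for all large $n$. Hence $\ind(Q_n)=d-1$ eventually, i.e., $Q_n$ is a sink.

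The orbits $Q_n$ are distinct (since $\pi(Q_n)\to\infty$), so we obtain infinitely many periodic sinks of $X$, contradicting the uniform finiteness bound of Theorem \ref{Thm:finite}. Therefore $C(\sigma)$ must be trivial, proving the theorem. The main technical obstacle lies in the estimate $m(\wedge^2\Phi_{t_0}|_{E^{cu}(p)})\le e^{-\lambda_0 t_0/2}$ for $p\in Q_n$ close to $\sigma$: since the bundle $E^{cu}(Q_n)$ is only defined along each periodic orbit (not at $\sigma$ itself), one must carefully transfer the linear sectional contraction at $\sigma$ to a uniform bound on the nonlinear tangent flow $\Phi_{t_0}$ along orbit segments inside $U$, using flow-box coordinates and the continuity of $\wedge^2\Phi_{t_0}$ on compact subsets of the Grassmannian of $2$-planes.
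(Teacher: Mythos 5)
There is a genuine gap: the claim that the fraction $\alpha_n$ of the period that $Q_n$ spends in a fixed neighborhood $U$ of $\sigma$ tends to $1$ is not justified, and it is false in general. The orbits $Q_n$ from Lemma \ref{Lem:Generic-Property}(4) converge to $C(\sigma)$ in the \emph{Hausdorff} topology, which means they must sweep out the entire class $C(\sigma)$, not just the singularity. If $C(\sigma)$ is a large set (containing other recurrence such as periodic orbits or further singularities), the time $Q_n$ spends away from $\sigma$ tracking the rest of $C(\sigma)$ grows along with the period, so the empirical measures on $Q_n$ need not concentrate at $\sigma$ at all; they converge (subsequentially) to \emph{some} invariant measure on $C(\sigma)$, not necessarily $\delta_\sigma$. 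Your assertion that the return time to $U$ stays bounded implicitly assumes there is no other recurrence in $C(\sigma)$, which is exactly what you are trying to prove. Without $\alpha_n\to 1$, the sectional-contraction estimate does not overwhelm the sectional-expansion bound from Corollary \ref{Cor:2-Form-Basic-Property}, so you cannot conclude that $Q_n$ is a sink.

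The paper's proof sidesteps this precisely. Instead of taking periodic orbits converging to the whole class $C(\sigma)$, it first applies the connecting lemma for chains (Lemma \ref{Lem:Pseudo-Connecting}) to perturb $X$ to a nearby $Y$ (agreeing with $X$ near $\sigma$) that has a single homoclinic orbit $\Gamma$ to $\sigma$, and then produces periodic orbits $P_n$ of flows $Y_n\to Y$ converging to the compact invariant set $\Gamma\cup\{\sigma\}$. The crucial point is that $\Gamma\cup\{\sigma\}$ supports a \emph{unique} invariant probability measure, namely $\delta_\sigma$ (since $\Gamma$ consists of a single wandering orbit). Therefore the invariant measures on $P_n$ are forced to converge to $\delta_\sigma$, and the sectional contraction at $\sigma$ makes $P_n$ a sink for large $n$. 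The contradiction is then obtained by choosing $\mathcal{G}_0$ to be the set of continuity points of the sink-counting function $N$ of Theorem \ref{Thm:finite}, rather than the generic set $\mathcal{G}$ you used. If you want to repair your argument, you would need this homoclinic-loop reduction (or some substitute that forces the empirical measures to concentrate at $\sigma$); without it, the key step fails.
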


\begin{proof}
We only consider sectional contracting singularities.
Define a map
$$
N: {\cal X}^*(M)\to\mathbb{N}
$$
by
$$
N(X)=\#\{P\subset M: P \mbox{ is a periodic sink of } X\}.
$$
According to Theorem \ref{Thm:finite}, $N(X)$ is well-defined. Since $N(\cdot)$ is lower semi-continuous, there exists a dense $G_\delta$ subset ${\cal G}_0\subset {\cal X}^*(M)$ such that $N(\cdot)$ is continuous on ${\cal G}_0$. Given $X\in{\cal G}_0$, take a small neighborhood ${\cal U}\subset {\cal X}^*(M)$ of $X$ such that $N(\cdot)$ is constant on ${\cal U}$.

We will prove that for any singularity $\sigma$ of $X\in{\cal G}_0$, if $T_\sigma M$ is sectional contracting, then $C(\sigma)$ is trivial. Otherwise, assume that $C(\sigma)$ is nontrivial. Then according to $C^1$ connecting lemma (Lemma \ref{Lem:Pseudo-Connecting}), there exists $Y\in{\cal U}$ such that $Y\equiv X$ in a neighborhood of $\sigma$, which implies that $T_\sigma M$ is still sectional contracting for $Y$, and $Y$ has a homoclinic loop $\Gamma$ associated to $\sigma=\sigma_Y$. $\Gamma\cup\{\sigma\}$ is sectional contracting since the unique invariant measure is the atomic measure $\delta_\sigma$ supported on $\sigma$. It is easy to see that there is a sequence $Y_n$ tending $Y$ and periodic orbit $P_n$ of $Y_n$ tending to $\Gamma\cup\{\sigma\}$ in the Hausdorff topology. Since the invariant measure supported on $P_n$ converges to $\delta_\sigma$, $P_n$ is a sink of $Y_n$ for $n$ large enough and hence $N(Y_n)\ge N(Y)+1$. This contradicts that $N(\cdot)$ is constant on ${\cal U}\ni Y$.
\end{proof}

From now on, we will only consider singularities which are neither sectional contracting nor sectional expanding.

\begin{Definition}\label{Def:Periodic-Index}
Let $X\in{\cal X}^*(M^d)$ and $\sigma\in {\rm Sing}(X)$ such that $C(\sigma)$ is nontrivial. Then the periodic index ${\rm Ind}_p(\sigma)$ of $\sigma$ is defined as
$$
{\rm Ind}_p(\sigma)=\left\{\begin{array}{ll} s, & {\rm if\ } \sv(\sigma)<0,\\
s-1, & {\rm if\ } \sv(\sigma)>0.\end{array}\right.
$$
For a periodic orbit $P$ of $X$, we define ${\rm Ind}_p(P)={\rm Ind}(P)$.
\end{Definition}

\begin{Remark} The notion of
periodic index of singularity is to describe the index of periodic
orbits derived from the perturbation of homoclinic loop associated
to the corresponding singularity. Our definition does not concern
the case that the saddle value of singularity is zero, which could
not occur if we admit the generic assumptions. However, we will
prove in Lemma \ref{Lem:Expon-Sing} that for every $X\in{\cal
X}^*(M^d)$ and $\sigma\in{\rm Sing}(X)$, if $C(\sigma)$ is
nontrivial, then $\sv(\sigma)\neq0$. This result justifies our
definition.
\end{Remark}

The next theorem  studies the singularities of a nontrivial  chain
recurrent class for a generic star flow. We show that these
singularities are all Lorenz-like, that is, the tangent space of the
singularity admits a partially hyperbolic splitting, and the strong
stable/unstable manifold intersects the chain recurrent class only
at the singularity. The proof will be given in Section 4.

\begin{Theorem}\label{Thm:Anal-sing}
For any $X\in{\cal X}^*(M^d)$ and $\sigma\in {\rm Sing}(X)$, if the chain recurrent class $C(\sigma)$ is nontrivial, then any singularity $\rho\in C(\sigma)$ is Lorenz-like. Moreover,
there is a dense $G_\delta$ subset ${\mathcal G_1}\subset{\cal X}^*(M^d)$ and if we further assume that $X\in{\mathcal G_1}$, then ${\rm Ind}_p (\rho)={\rm Ind}_p (\sigma)$.
\end{Theorem}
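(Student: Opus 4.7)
My plan is to produce, via the connecting lemmas, hyperbolic periodic orbits of nearby star fields that accumulate on $\rho$ along a homoclinic loop, and then transfer the uniform estimates of Lemma \ref{Lem:Basic-Property} and Corollary \ref{Cor:2-Form-Basic-Property} to information about the eigenvalues at $\rho$ by passing to the limit. Since $C(\sigma)$ is nontrivial and contains $\rho$, Lemma \ref{Lem:Pseudo-Connecting} allows me to perturb $X$ to some $Y\in{\cal X}^*(M^d)$ arbitrarily close, with $Y\equiv X$ on a fixed neighborhood of $\rho$, having a homoclinic loop $\Gamma$ at $\rho$. A further arbitrarily small perturbation produces $Y_n\to Y$ admitting periodic orbits $P_n$ whose Hausdorff limit is $\Gamma\cup\{\rho\}$ and whose period tends to infinity. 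Because $Y_n$ is star, Lemma \ref{Lem:Basic-Property} provides a hyperbolic splitting ${\cal N}_{P_n}=N_n^s\oplus N_n^u$ with uniform rates, and Corollary \ref{Cor:2-Form-Basic-Property} gives the corresponding strict $2$-form contraction of $E^{cs}_n=N_n^s\oplus\langle Y_n\rangle$ and strict $2$-form expansion of $E^{cu}_n=N_n^u\oplus\langle Y_n\rangle$.

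The fraction of the period that $P_n$ spends in any fixed neighborhood of $\rho$ tends to $1$, so the normalized invariant measures on $P_n$ converge weakly to $\delta_\rho$. Passing to the limit in the multiplicative estimates of Corollary \ref{Cor:2-Form-Basic-Property} yields a $\Phi_t$-invariant splitting $T_\rho M=E^{cs}_\rho\oplus E^{cu}_\rho$ in which $E^{cs}_\rho$ is $2$-form contracting and $E^{cu}_\rho$ is $2$-form expanding, both with rate at least the uniform $\eta>0$ of Lemma \ref{Lem:Basic-Property}. Matching the dimensions of the factors, which are determined by $\dim N_n^s$ and $\dim N_n^u$ together with the collapse of the flow direction at $\rho$, against the eigenvalue decomposition of $\Phi_t(\rho)$, and exploiting the strict rate, the splitting is forced to be either $E^{ss}_\rho\oplus E^{cu}_\rho$ with $\dim E^{ss}_\rho=s-1$ (whence $\lambda_{s-1}<\lambda_s$ and $\sv(\rho)=\lambda_s+\lambda_{s+1}>0$) or $E^{cs}_\rho\oplus E^{uu}_\rho$ with $\dim E^{uu}_\rho=d-s-1$ (whence $\lambda_{s+1}<\lambda_{s+2}$ and $\sv(\rho)<0$); in either case $\sv(\rho)\neq 0$ and the strict gap required by Definition \ref{Def:Lorenz-like} holds. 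For the remaining non-intersection property $W^{ss}(\rho)\cap C(\sigma)=\{\rho\}$ in the case $\sv(\rho)>0$, I argue by contradiction: an additional intersection point would allow, via Lemma \ref{Lem:Pseudo-Connecting}, the construction of periodic orbits carrying tangent directions asymptotic to $E^{ss}_\rho$, which the $(C,\lambda)$-dominated splitting of Lemma \ref{Lem:Basic-Property} could not dominate from above.

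For the generic statement, define ${\cal G}_1$ as the intersection of the residual set ${\cal G}$ of Lemma \ref{Lem:Generic-Property} with ${\cal G}_0$. On ${\cal G}_1$ the Hausdorff continuity of chain classes and persistence of the equality $C(\rho)=C(\sigma)$ allow the above construction to be applied at $\rho$ and at $\sigma$ simultaneously, producing periodic orbits $P_n,Q_n$ of $Y_n\to X$ inside the continuation of $C(\sigma)$ with indices ${\rm Ind}_p(\rho)$ and ${\rm Ind}_p(\sigma)$ respectively. Both families lie, after perturbation, in a single chain recurrent class of a star flow; by the structure of chain classes of star flows established in Gan--Wen \cite{GaW06}, two periodic orbits inside one chain class of a star flow necessarily have the same index (robust heterodimensional cycles are ruled out by the star condition). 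Hence ${\rm Ind}_p(\rho)={\rm Ind}_p(\sigma)$.

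The main technical obstacle is the convergence step in the second paragraph. Although the normalized measures on $P_n$ manifestly converge to $\delta_\rho$, the splittings ${\cal N}_{P_n}=N_n^s\oplus N_n^u$ live on normal bundles that degenerate as the orbit approaches $\rho$, so one has to control the angles between $N_n^s$, $N_n^u$, and the collapsing flow direction $\langle Y_n\rangle$ in order to identify a genuine subspace of $T_\rho M$ in the limit. Making this Grassmannian limit rigorous, and verifying that it lands at the correct cut in the spectrum of $\Phi_t(\rho)$, is where the star condition, the uniform $2$-form estimates, and the geometry of the accumulating homoclinic loop must be combined carefully.
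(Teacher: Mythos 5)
Your first two paragraphs are, in spirit, the paper's Lemmas \ref{Lem:Index-Estimation} and \ref{Lem:Expon-Sing}: produce a homoclinic loop at $\rho$ via Lemma \ref{Lem:Pseudo-Connecting}, approximate it by periodic orbits of nearby star fields, transport the uniform estimates of Lemma \ref{Lem:Basic-Property}/Corollary \ref{Cor:2-Form-Basic-Property} to a splitting at $\rho$, and use sectional (2-form) contraction/expansion to force a spectral gap and $\sv(\rho)\neq 0$. You rightly flag that the hard point is making the Grassmannian limit of the normal-bundle splittings rigorous as the orbit collapses onto $\rho$; that is exactly what the extended linear Poincar\'e flow and the sets $B^j(\Lambda)$ do in the paper. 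But beyond that technical caveat you also need to track which index the accumulating periodic orbits have (is $l=s$ or $l=s-1$?), and this is what pins down whether the gap opens at $\lambda_{s-1}<\lambda_s$ or at $\lambda_{s+1}<\lambda_{s+2}$ and whether $\sv(\rho)$ is positive or negative; a dimension count alone does not decide this. The paper's Lemma \ref{Lem:Index-Estimation} gives precisely the two-sided bound ${\rm Ind}(\sigma)-1\le l\le {\rm Ind}(\sigma)$ and Lemma \ref{Lem:Expon-Sing} then treats the two cases $\beta(B^{s-1}(\Lambda))=\Lambda$ and $\beta(B^{s}(\Lambda))=\Lambda$ separately. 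The strong-manifold noncrossing $W^{ss}(\rho)\cap C(\sigma)=\{\rho\}$ is also correctly identified as an argument by contradiction with the domination, parallel to Lemma \ref{Lem:Strong-Connection}.

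The genuine gap is in your last paragraph, proving ${\rm Ind}_p(\rho)={\rm Ind}_p(\sigma)$ generically. You invoke \cite{GaW06} to conclude that two periodic orbits in a single chain class of a star flow have the same index. That theorem concerns \emph{nonsingular} star flows (where the chain class is hyperbolic), and it simply does not apply to a chain class containing singularities. In fact, the very existence of a uniform periodic index across a singular chain recurrent class is a substantial part of what Theorem \ref{Thm:Anal-sing} is asserting, so the citation is circular. There is also a subsidiary problem: your families $P_n$ and $Q_n$ live on \emph{different} perturbed vector fields $Y_n$, so one cannot simply declare that ``both families lie in a single chain recurrent class of a star flow.'' The paper's route (Lemma \ref{Lem:Homogeneous}, via the Sublemma constructing a heteroclinic cycle between $\rho$ and $\sigma$) is designed exactly to avoid this: one builds a continuous two-parameter family $Y_{r,t}$ of star vector fields with periodic orbits $\Gamma_{r,t}$, uses Lemma \ref{Lem:Homoclinic-Loop} to read off ${\rm Ind}(\Gamma_{r,t})={\rm Ind}_p(c)$ near $t=0$ and ${\rm Ind}(\Gamma_{r,t})={\rm Ind}_p(\sigma)$ near $r=0$, and if those differ, continuity of eigenvalues along the family produces a nonhyperbolic periodic orbit, contradicting the star condition. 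Your proposal needs this (or an equivalent) intermediate-value argument; without it, the homogeneity of periodic index is unjustified.
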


\begin{Remark} From this
theorem and the definition of periodic index of singularity, it
follows that, for a generic star vector field $X$ and any nontrivial
chain recurrent class $C(\sigma)$ of $X$, if $\rho\in
C(\sigma)\cap{\rm Sing}(X)$, then the index of $\rho$ can only be
${\rm Ind}_p (\sigma)+1$ if $\sv(\rho)>0$, or ${\rm Ind}_p
(\sigma)$ if $\sv(\rho)<0$.
\end{Remark}

The next theorem states that if the singularities of a chain
recurrent class are all Lorenz-like and have the same index, then
the chain recurrent class is singular hyperbolic. The proof will be
given in Section 5.

\begin{Theorem}\label{Thm:Sing-Hyper}
There is a dense $G_\delta$ subset ${\mathcal G_2}\subset{\cal X}^*(M^d)$ such that for any $X\in{\mathcal G_2}$ and $\sigma\in{\rm Sing}(X)$, if $C(\sigma)$ is nontrivial and for any singularity $\rho\in C(\sigma)$, ${\rm Ind} (\rho)={\rm Ind} (\sigma)$, then $C(\sigma)$ is positively or negatively singular hyperbolic.
\end{Theorem}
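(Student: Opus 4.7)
The plan is to take $\mathcal G_2$ to be the intersection of the generic sets $\mathcal G$ from Lemma~\ref{Lem:Generic-Property}, $\mathcal G_0$ from Theorem~\ref{Thm:degenerate}, and $\mathcal G_1$ from Theorem~\ref{Thm:Anal-sing}. Fix $X\in\mathcal G_2$ and a nontrivial chain recurrent class $C(\sigma)$ all of whose singularities have common index $s:={\rm Ind}(\sigma)$. By Theorem~\ref{Thm:Anal-sing}, every $\rho\in C(\sigma)\cap{\rm Sing}(X)$ is Lorenz-like and satisfies ${\rm Ind}_p(\rho)={\rm Ind}_p(\sigma)$; since ${\rm Ind}_p$ equals $s$ when $\sv<0$ and $s-1$ when $\sv>0$, all these singularities share a common sign of $\sv$. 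After replacing $X$ by $-X$ if necessary, I assume $\sv(\rho)>0$ for every such $\rho$, so that $C(\sigma)$ should be shown positively singular hyperbolic; the Lorenz-like splitting then reads $T_\rho M=E^{ss}_\rho\oplus E^{cu}_\rho$ with $\dim E^{ss}_\rho=s-1$, $\dim E^{cu}_\rho=d-s+1$, and $W^{ss}(\rho)\cap C(\sigma)=\{\rho\}$.

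Next, Lemma~\ref{Lem:Generic-Property}(4) gives periodic orbits $Q_n\to C(\sigma)$ in the Hausdorff topology, and a standard star-flow argument (using Lemma~\ref{Lem:Pseudo-Connecting} and the star condition to exclude other indices) shows ${\rm Ind}(Q_n)=s-1$ for all large $n$. Applying Lemma~\ref{Lem:Basic-Property} to the $Q_n$ yields, uniformly in $n$, a dominated hyperbolic splitting $\mathcal N|_{Q_n}=N^s_n\oplus N^u_n$ of the linear Poincar\'e flow $\psi_t$ with $\dim N^s_n=s-1$ and uniform contraction of $N^s_n$, while Corollary~\ref{Cor:2-Form-Basic-Property} gives uniform sectional expansion of $\wedge^2\Phi_T$ on $E^{cu}_n:=N^u_n\oplus\langle X\rangle$. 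Passing to subsequential Grassmannian limits over the regular part of $C(\sigma)$ produces a continuous $\Phi_t$-invariant $(d-s+1)$-plane bundle $E^{cu}$ and a $\psi_t$-invariant $(s-1)$-plane subbundle $N^s\subset\mathcal N$ inheriting the same uniform estimates. The crucial step is then the continuous extension of $E^{cu}$ across each $\rho\in C(\sigma)\cap{\rm Sing}(X)$: for any sequence $x_k\to\rho$ of regular points in $C(\sigma)$, every Grassmannian limit $F$ of $E^{cu}(x_k)$ must equal $E^{cu}_\rho$. Indeed, domination inherited from Lemma~\ref{Lem:Basic-Property} forces $F$ to be transverse to $E^{ss}_\rho$, while the Lorenz-like condition $W^{ss}(\rho)\cap C(\sigma)=\{\rho\}$ rules out the remaining degenerate alignments; combined with $\Phi_t$-invariance this pins $F=E^{cu}_\rho$, and the complementary dominated subbundle then defines $E^{ss}$ on all of $C(\sigma)$, matching $E^{ss}_\rho$ at each singularity.

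With the continuous splitting $TM|_{C(\sigma)}=E^{ss}\oplus E^{cu}$ in hand, positive singular hyperbolicity reduces to three verifications. Domination with uniform constants is inherited from the first item of Lemma~\ref{Lem:Basic-Property}, passed to the limit and extended by continuity across singularities. Uniform contraction of $E^{ss}$ follows from the second item of Lemma~\ref{Lem:Basic-Property} on the regular part, together with the spectral gap $\lambda_{s-1}<\lambda_s<0$ at each Lorenz-like singularity. Sectional expansion of $E^{cu}$ follows from Corollary~\ref{Cor:2-Form-Basic-Property} on the regular part and, at each singularity $\rho$, from $\lambda_s+\lambda_{s+1}=\sv(\rho)>0$ together with $\lambda_{s+1}\le\lambda_{s+j}$ for $j\ge 1$, which makes every $2$-plane in $E^{cu}_\rho$ sectionally expanded. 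The hardest step I anticipate is the continuous extension of $E^{cu}$ through the singularities: this is where the Lorenz-like geometry from Theorem~\ref{Thm:Anal-sing} must interact delicately with the limit of the dominated splitting coming from Lemma~\ref{Lem:Basic-Property}, and where the absence of a well-defined normal bundle at the singularities makes the limiting argument subtle.
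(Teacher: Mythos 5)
Your setup is right (replacing $X$ by $-X$ to make all saddle values positive, passing through periodic orbits $Q_n\to C(\sigma)$, using the Lorenz-like splitting at each singularity, extending a dominated splitting over $C(\sigma)$), but there are two genuine gaps, and they are precisely the two places where the paper's proof needs the heavy machinery of Section~5.

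First, the step ``${\rm Ind}(Q_n)=s-1$ for all large $n$ by a standard star-flow argument'' is not standard at all. The $Q_n$ only converge to $C(\sigma)$ in the Hausdorff topology; they are not contained in $C(\sigma)$, so Lemma~\ref{Lem:Homogeneous} does not apply to them directly. Pinning their index is exactly the content of the homogeneous property (Theorem~\ref{Thm:Loc-Homoge}), whose proof goes through the ergodic-measure description of star flows (Theorem~\ref{Thm:Erg-Measure}), which in turn relies on Liao's shadowing lemma (Theorem~\ref{Thm:Liao-Shadowing}) and the invariant-manifold size estimate (Theorem~\ref{Thm:Inv-Mfd-Size}). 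None of this is in your $\mathcal G_2$; you cannot take $\mathcal G_2=\mathcal G\cap\mathcal G_0\cap\mathcal G_1$ and get this for free.

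Second, and more seriously, ``uniform contraction of $E^{ss}$ follows from item~2 of Lemma~\ref{Lem:Basic-Property} on the regular part'' and ``sectional expansion of $E^{cu}$ follows from Corollary~\ref{Cor:2-Form-Basic-Property} on the regular part'' do not survive the passage to the limit. Item~2 of Lemma~\ref{Lem:Basic-Property} and Corollary~\ref{Cor:2-Form-Basic-Property} are \emph{averaged} estimates over one period of $Q_n$; in the Hausdorff limit the period goes to infinity and you only inherit \emph{measure-theoretic} averages, not pointwise-uniform rates. Moreover, even if you had pointwise-uniform contraction of $N^s$ for the linear Poincar\'e flow $\psi_t$, that would not give uniform contraction of $E^{ss}$ for the tangent flow $\Phi_t$: near a singularity the flow speed $\|X(x)\|$ degenerates and the reparametrization factor between $\psi_t$ and $\Phi_t$ blows up. The paper's resolution is the mixed dominated splitting claim $(N^s,\psi_t)\prec(\cP,\Phi_t)$ --- equivalently, uniform contraction of $N^s$ for the \emph{scaled} linear Poincar\'e flow $\psi^*_t$ --- proven by contradiction: if it fails, there is an ergodic measure on $C(\sigma)$ whose stable dimension is too small, and Theorem~\ref{Thm:Erg-Measure} then produces a periodic orbit of the wrong index in $C(\sigma)$, violating the homogeneous property. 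Your proposal has no analogue of this argument (no scaled linear Poincar\'e flow, no ergodic closing lemma, no Liao shadowing), and without it the transition from average rates on periodic orbits to uniform singular hyperbolicity on $C(\sigma)$ does not go through. Your observation about the extension of $E^{cu}$ across singularities using $W^{ss}(\rho)\cap C(\sigma)=\{\rho\}$ is essentially correct in spirit and corresponds to the paper's computation in the extended bundle $\beta^*(TM)|_{B^k(C(\sigma))}$, but that is the easier part; the missing ergodic-measure bridge is the real content of the theorem.
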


\begin{Remark}
Notice that Theorem \ref{Thm:Anal-sing} talks about the {\em
periodic index} of singularities, while Theorem \ref{Thm:Sing-Hyper}
talks about the {\em index} (not periodic index) of singularities.
\end{Remark}

Now we give the proofs of Theorem A, C and D by assuming Theorem
\ref{Thm:Anal-sing} and \ref{Thm:Sing-Hyper}. A detailed version of
Theorem E (Theorem \ref{Thm:Erg-Measure}) will be proved in section
5. \vskip 5mm

\noindent{\bf Proof of Theorem A.\ } Let ${\mathcal G_A}={\mathcal G_2}$, which is  a dense $G_\delta$ subset of
${\cal X}^*(M^d)$. Let $X\in{\mathcal G_A}$, and $C$ be a chain
recurrent class  of $X$. If $C\cap{\rm Sing}(X)=\varnothing$, then
we apply \cite{GaW06} to conclude that $C$ is a hyperbolic set,
which is of course singular hyperbolic. Now, assume that there exists some
singularity $\sigma\in C$. If $C=\{\sigma\}$, from star condition, $C$ is hyperbolic and hence singular hyperbolic. If $C$ is nontrivial, Theorem \ref{Thm:Sing-Hyper} tells us that $C$ is positively or negatively singular hyperbolic. This proves Theorem A.\hfill $\Box$

\vskip 5mm

\noindent{\bf Proof of Theorem C.\ } We let ${\mathcal
G_C}={\mathcal G_0}\cap{\mathcal G_1}\cap{\mathcal G_2}$. Consider any $X\in{\mathcal
G_C}$ and any Lyapunov stable chain recurrent class $C$ of $X$. If
$C\cap{\rm Sing}(X)=\varnothing$, then \cite{GaW06} guarantees that
$C$ is a hyperbolic attractor. So we only need to consider the case
when $C$ contains  some singularity. Since $C$ is Lyapunov
stable, we must have $W^u(\sigma)\subset C$ for any $\sigma\in C\cap{\rm
Sing}(X)$.

\begin{Claim}
For any $\sigma\in C\cap{\rm Sing}(X)$, we have that $\sv(\sigma)>0$.
\end{Claim}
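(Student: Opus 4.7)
The plan is a proof by contradiction. Suppose $\sv(\sigma)\le 0$. I would first dispose of the trivial case $C=\{\sigma\}$: Lyapunov stability together with the star condition forces $\sigma$ to be a sink, for which $\sv(\sigma)$ is not even defined (and such a singleton class is singular hyperbolic directly), so I may assume $C=C(\sigma)$ is nontrivial. The Remark following Definition~\ref{Def:Periodic-Index} (Lemma~\ref{Lem:Expon-Sing}) guarantees $\sv(\sigma)\neq 0$, so it suffices to rule out $\sv(\sigma)<0$.

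Under that assumption, Theorem~\ref{Thm:Anal-sing} (using $X\in\mathcal{G}_1$) tells me that $\sigma$ is Lorenz-like. In the case $\sv(\sigma)<0$ this supplies the partially hyperbolic splitting $T_\sigma M=E^{cs}_\sigma\oplus E^{uu}_\sigma$ together with the crucial topological restriction $W^{uu}(\sigma)\cap C(\sigma)=\{\sigma\}$. To exploit this I must verify that $E^{uu}_\sigma$ is positive dimensional. If instead $E^{uu}_\sigma=\{0\}$, then $s=d-1$, and $\sv(\sigma)=\lambda_{d-1}+\lambda_d<0$ is precisely the statement that the two largest exponents sum to a negative number, i.e.\ $T_\sigma M$ is sectional contracting. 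But Theorem~\ref{Thm:degenerate} (for $X\in\mathcal{G}_0$) then forces $C(\sigma)$ to be trivial, contradicting our reduction. Hence $\dim E^{uu}_\sigma\ge 1$ and $W^{uu}(\sigma)$ is a genuine nontrivial local invariant manifold through $\sigma$.

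To close the argument I would invoke Lyapunov stability of $C$. Since every neighborhood of $C$ contains a positively invariant neighborhood, $W^{u}(\sigma)\subset C$, and because $W^{uu}(\sigma)\subset W^{u}(\sigma)$, also $W^{uu}(\sigma)\subset C=C(\sigma)$. Combined with the Lorenz-like restriction $W^{uu}(\sigma)\cap C(\sigma)=\{\sigma\}$, this forces $W^{uu}(\sigma)=\{\sigma\}$, contradicting the nontriviality established in the previous paragraph. The only step that requires real care is the verification that $E^{uu}_\sigma$ is nontrivial: the Lorenz-like definition explicitly tolerates $E^{uu}_\sigma=\{0\}$ a priori, so it is essential that one uses $X\in\mathcal{G}_0$ through Theorem~\ref{Thm:degenerate} to eliminate the sectional contracting scenario here. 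Everything else is a direct consequence of Theorem~\ref{Thm:Anal-sing}, the Lorenz-like definition, and the topological meaning of Lyapunov stability.
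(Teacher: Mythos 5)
Your proof is correct and follows essentially the same line as the paper's: assume $\sv(\sigma)<0$, invoke Theorem~\ref{Thm:Anal-sing} to get the Lorenz-like splitting $T_\sigma M=E^{cs}_\sigma\oplus E^{uu}_\sigma$ with $W^{uu}(\sigma)\cap C(\sigma)=\{\sigma\}$, use Theorem~\ref{Thm:degenerate} (hence $X\in\mathcal{G}_0$) to rule out sectional contraction and conclude $E^{uu}_\sigma\neq\{0\}$, and then contradict $W^u(\sigma)\subset C$ forced by Lyapunov stability. Your extra handling of the trivial class $C=\{\sigma\}$ and the explicit appeal to $\sv(\sigma)\neq 0$ (Corollary~\ref{Cor:Saddle-Value}) are harmless tidying of details the paper treats as implicit.
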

\begin{proof}[Proof of Claim: ]
Otherwise, assume that there exists $\sigma\in C\cap{\rm Sing}(X)$, $\sv(\sigma)<0$.
By Theorem \ref{Thm:Anal-sing}, $\sigma$ is Lorenz-like, i.e., there exists a negatively singular hyperbolic splitting $T_\sigma M=E^{cs}_\sigma\oplus E^{uu}_\sigma$. According to Theorem \ref{Thm:degenerate}, $T_\sigma M$ is not sectional contracting. So, $E^{uu}_\sigma$ is nontrivial. Hence, $W^{uu}(\sigma)\setminus C\not=\emptyset$, which contradicts $W^{u}(\sigma)\subset C$.
\end{proof}
Now the singularities in $C$ have the same index. Applying
Theorem \ref{Thm:Sing-Hyper} we conclude that $C$ is positively singular
hyperbolic. This proves Theorem C.\hfill $\Box$ \vskip 5mm

Combining these results, we could show that the
singular hyperbolicity of chain recurrent set for generic star flows
in dimension 4. \vskip 5mm

 \noindent{\bf Proof of Theorem D.\ } We
assume ${\rm dim}(M)=4$ and ${\mathcal G_D}={\mathcal G_0}\cap{\mathcal
G_1}\cap{\mathcal G_2}\cap{\mathcal G}$ which is a dense $G_\delta$
subset of ${\cal X}^*(M^4)$, where ${\cal G}$ is the dense
$G_\delta$ set in Lemma \ref{Lem:Generic-Property}. As in the proofs
of the above theorems, for any $X\in{\mathcal G_D}$, we only need to
consider a nontrivial chain recurrent class $C$ of $X$ such that
there exists $\sigma\in C\cap{\rm Sing}(X)$.

If there exists some singularity $\rho\in C$ such that ${\rm
Ind}(\rho)=3$, then ${\rm dim}(E^u(\rho))=1$ and $W^u(\rho)$ has two
separatrices. Since we assume $X\in{\mathcal G}$, $C(\rho_X,X)=C$
depends continuously on $X$ and hence is robustly nontrivial.

\begin{Claim}
$W^u(\rho)\subset C$ and, consequently, $C$ is Lyapunov stable.
\end{Claim}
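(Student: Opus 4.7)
The plan is to pin down the partially hyperbolic structure at $\rho$ from the generic hypotheses, argue that each separatrix of $W^u(\rho)$ must lie in $C$ by combining chain recurrence with the chain connecting lemma, and finally invoke Lemma~\ref{Lem:Generic-Property}(3). Since $\mathrm{Ind}(\rho)=3$ and $\dim M=4$, the Lyapunov exponents at $\rho$ satisfy $\lambda_1\le\lambda_2\le\lambda_3<0<\lambda_4$. The nontriviality of $C(\rho)$ together with $X\in{\mathcal G}_1$ invokes Theorem~\ref{Thm:Anal-sing}: $\rho$ is Lorenz-like. The case $\sv(\rho)<0$ is ruled out because the defining Lorenz-like condition $\lambda_{s+1}<\lambda_{s+2}$ would need a nonexistent fifth exponent, and equivalently it would make $T_\rho M$ sectional contracting, whence $C(\rho)$ would be trivial by Theorem~\ref{Thm:degenerate} (as $X\in{\mathcal G}_0$). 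Therefore $\sv(\rho)>0$, yielding the splitting $T_\rho M=E^{ss}_\rho\oplus E^{cu}_\rho$ with both factors two-dimensional and $W^{ss}(\rho)\cap C=\{\rho\}$; in particular $W^u(\rho)$ is one-dimensional, consisting of $\rho$ and two separatrices $\ell_1,\ell_2$.

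Fix a separatrix $\ell$ and any $p\in\ell$. Since $\alpha(p)=\{\rho\}$, for every $\varepsilon>0$ and $T>0$ one can find $t^{*}>T$ with $d(\phi_{-t^{*}}(p),\rho)<\varepsilon$, giving an $(\varepsilon,T)$-chain $\rho\to\phi_{-t^{*}}(p)\to p$, so $p$ is chain attainable from $\rho$. The $\omega$-limit set $\omega(p)$ is chain transitive, hence lies in a single chain recurrent class $C'$. If $C'\cap C\neq\varnothing$ then $C'=C$, and concatenating a short chain from $p$ to some $q\in\omega(p)\cap C$ with a chain in $C$ from $q$ to $\rho$ produces the return chain $p\to\rho$, so $p\in C$. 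The argument thus reduces to excluding the alternative $\omega(p)\subset C'$ with $C'\cap C=\varnothing$.

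Assume this alternative. Both $C$ and $C'$ are nontrivial, so Lemma~\ref{Lem:Generic-Property}(4) supplies periodic orbits of $X$ Hausdorff-approximating each, all of periodic index $\mathrm{Ind}_p=2$ by Theorem~\ref{Thm:Anal-sing}. Applying the chain connecting lemma (Lemma~\ref{Lem:Pseudo-Connecting}, in the strengthened form from the Remark that reserves small neighborhoods of prescribed critical elements) to a narrow tube around the heteroclinic arc $\ell\cup\omega(p)$ yields $Y\in{\mathcal X}^{*}(M^{4})$ arbitrarily $C^{1}$-close to $X$, with $Y\equiv X$ near $\rho$ and near chosen critical elements of $C$ and $C'$, realizing a real orbit from $C'_Y$ into $W^s_{\mathrm{loc}}(\rho_Y)$; concatenated with $\ell$ this yields a cycle, so $\rho_Y$ and $C'_Y$ share a chain recurrent class in $Y$ and $C(\rho_Y,Y)\supset C\cup C'_Y$ strictly. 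This contradicts Lemma~\ref{Lem:Generic-Property}(1). Hence $\ell\subset C$; running the same argument on the other separatrix gives $W^u(\rho)\subset C$, and Lemma~\ref{Lem:Generic-Property}(3) then delivers the Lyapunov stability of $C$. The main obstacle is this perturbation step, which needs the connecting lemma applied simultaneously near two disjoint chain classes while preserving the star condition; the homogeneous periodic-index information from Theorem~\ref{Thm:Anal-sing} and the strengthened Remark after Lemma~\ref{Lem:Pseudo-Connecting} are precisely what make the construction feasible.
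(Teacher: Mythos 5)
Your setup is sound: the argument that $\sv(\rho)>0$ (via the vacuousness of the $\sv<0$ alternative in dimension 4, together with Theorem~\ref{Thm:degenerate}) and the reduction to showing each separatrix $\ell$ of $W^u(\rho)$ lies in $C$ are both correct, and your first case ($\omega(p)\subset C'$ with $C'\cap C\neq\varnothing$ forces $p\in C$) is fine.

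The gap is in the exclusion of the alternative $\omega(p)\subset C'$ with $C'\cap C=\varnothing$. Your plan is to use Lemma~\ref{Lem:Pseudo-Connecting} to ``realize a real orbit from $C'_Y$ into $W^s_{\mathrm{loc}}(\rho_Y)$'' and close up a cycle. But the connecting lemma for chains only produces an orbit from $x$ to $y$ when $y$ is chain attainable from $x$, and here $\rho$ is \emph{not} chain attainable from any point of $C'$: if it were, then combined with the chain from $\rho$ to $C'$ along $\ell$ we would already have $\rho\sim C'$, i.e.\ $C'=C$, exactly the case you are trying to exclude. So there is nothing to feed the connecting lemma with, and the tube around $\ell\cup\omega(p)$ only carries the connection in the wrong direction ($\rho\to C'$), which you already have. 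A side issue: you assert that $C'$ is nontrivial in order to use Lemma~\ref{Lem:Generic-Property}(4), but $C'$ could perfectly well be a trivial critical element (e.g.\ a hyperbolic sink), and in fact that is the harder case to exclude.

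The paper's argument avoids this entirely by running the contradiction through the \emph{other} separatrix. Suppose $\mathrm{Orb}(x_1)\not\subset C$; this persists under small perturbation by upper semi-continuity of chain classes. Nontriviality forces $\mathrm{Orb}(x_2)\subset C$, so $\rho$ \emph{is} chain attainable from $x_2$, and Lemma~\ref{Lem:Pseudo-Connecting} legitimately turns $\mathrm{Orb}(x_2)$ into a homoclinic orbit of $\rho$. The $\lambda$-lemma then lets a further tiny perturbation steer the forward orbit out along $W^u(\rho_Y)$ on the $x_1$-side, which robustly leaves $C$; hence $W^u(\rho_Y)\cap C(\rho_Y,Y)=\{\rho_Y\}$, making $C(\rho_Y,Y)$ trivial and contradicting the robust nontriviality guaranteed by $X\in\mathcal G$. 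Note the key structural difference: the paper only ever applies the connecting lemma in the direction where chain attainability is available, and gets the contradiction from forcing \emph{both} separatrices to escape, rather than from fabricating a cycle.
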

\begin{proof}[Proof of Claim: ]
In fact, suppose on the contrary that one separatrix ${\rm
Orb}(x_1)$ of $W^u(\rho)$ is not contained in $C$. By the upper
semi-continuity of chain recurrent class, we know this holds
robustly. The non-triviality of $C(\rho)$ implies the other
separatrix ${\rm Orb}(x_2)$ of $W^u(\rho)$ is contained in $C$.
Using the connecting lemma for chains, you can perturb ${\rm
Orb}(x_2)$ to be the homoclinic orbit associated to $\rho$. Then
applying the $\lambda$-lemma, an arbitrarily small perturbation
could make the positive orbit of $x_2$ arbitrarily close to $x_1$,
which is no longer contained in $C(\rho)$. Combining all these
perturbations together, we get a vector field $Y$ arbitrarily
$C^1$ close to $X$, such that
$$
W^u(\rho_Y)\cap C(\rho_Y,Y)={\{\rho_Y}\},
$$
contradicting the fact that $C(\rho_X,X)$ is robustly nontrivial.
\end{proof}

From the claim and Theorem C, $C$ is positively singular hyperbolic.

If there are some singularity $\rho\in C$ such that ${\rm Ind}(\rho)=1$, we just need to consider $-X$. Then following the analysis above directly, $C$ is Lyapunov stable for $-X$, which is negatively singular hyperbolic for $X$. So we can reduce to the case that all the singularities contained in $C$ have the same index 2, which allows us to applying Theorem A. As a result, $C$ is singular hyperbolic.

Now we have proved that every chain recurrent class of $X$ is
singular hyperbolic. And hence,  ${\rm CR}(X)$ is
singular hyperbolic. This proves Theorem D. \hfill $\Box$ \vskip 5mm

\section{Analysis of singularities}

In this section, we will analyze the singularities contained in a nontrivial chain recurrent class for some $X\in{\cal X}^*(M^d)$. Our main technique is the extended linear Poincar\'e flow introduced in $\cite{LGW05}$, which has been proved to be a useful tool in the analysis of non-isolated singularities (e.g., see \cite{ZGW08, GaY13, AMS12}).

First we state a lemma on the estimation of index of periodic orbits
which accumulate on singularities and their homoclinic orbits. Then
we use the dominated splitting of the extended linear Poincar\'e
flow to achieve the properties of Lyapunov exponents of
singularities. Especially, we will conclude that all the
singularities whose chain recurrent class are nontrivial  are
Lorenz-like.
\begin{Lemma}\label{Lem:Index-Estimation}
Let $X\in{\cal X}^*(M^d)$,  $\sigma\in{\rm Sing}(X)$ and
$\Gamma={\rm Orb}(x)$ be a homoclinic orbit associated to $\sigma$.
Assume that there exists a sequence of star vector fields $\{X_n\}$
converging to $X$ in the $C^1$ topology and periodic orbit $P_n$ of
$X_n$ with index $l$ such that $\{P_n\}$ converges to $\Gamma\cup\{\sigma\}$ in the
Hausdorff topology. Then there exist two subspaces $E, F\subset T_\sigma M$ such that
\begin{enumerate}
\item  $E$ is $(l+1)$-dimensional and sectional contracting:
$$
\frac{1}{k}\sum_{i=0}^{k-1}\log\|\wedge^2\Phi_1^X|_{\Phi_i^X(E)}\|\leq-\eta,\ k=1,2,\cdots
$$
\item  $F$ is $(d-l)$-dimensional and sectional expanding:
$$
\frac{1}{k}\sum_{i=0}^{k-1}\log m(\wedge^2\Phi_1^X|_{\Phi_i^X(F)})\geq\eta,\ k=1,2,\cdots
$$
\end{enumerate}
Here the constant $\eta$ comes from corollary \ref{Cor:2-Form-Basic-Property}.

Moreover, we have the following estimation of the index of periodic orbits:
$$
{\rm Ind}(\sigma)-1\leq l={\rm Ind}(P_n)\leq{\rm Ind}(\sigma)~.
$$

\end{Lemma}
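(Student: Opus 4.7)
The plan is to construct $E$ and $F$ as Grassmannian limits at $\sigma$ of the periodic-orbit bundles produced by Corollary~\ref{Cor:2-Form-Basic-Property}, and to promote the cyclic sectional estimate on $P_n$ to a sectional estimate at the singularity via a Pliss-type selection of base point. For $n$ large, $X_n$ lies in a fixed $C^1$ neighborhood of $X$ and $P_n$ has period $\pi_n\to\infty$; Lemma~\ref{Lem:Basic-Property} then furnishes the hyperbolic splitting $\cN_{P_n}=N_n^s\oplus N_n^u$ with $\dim N_n^s=l$. Setting $E_n^{cs}=N_n^s\oplus\langle X_n\rangle$ (dimension $l+1$) and $E_n^{cu}=N_n^u\oplus\langle X_n\rangle$ (dimension $d-l$), Corollary~\ref{Cor:2-Form-Basic-Property} provides, for every $x\in P_n$,
$$\sum_{i=0}^{\pi_n-1}\log\bigl\|\wedge^2\Phi_1^{X_n}|_{E_n^{cs}(\phi_i^{X_n}(x))}\bigr\|\le -\eta\,\pi_n,$$
together with the symmetric lower bound on the mini-norm for $E_n^{cu}$.

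Next I would apply a Pliss-type selection. Since the integrands above are uniformly bounded on the neighborhood of $X$, for any prescribed $0<\eta'<\eta$ the Pliss lemma applied to the cyclic sum yields a subset of ``$\eta'$-Pliss points'' of definite positive density in $P_n$ at each of which the one-sided partial sum from $k=1$ up to the full period is $\le -\eta'k$, and analogously with $+\eta'$ for $E_n^{cu}$. On the other hand, the Hausdorff convergence $P_n\to\Gamma\cup\{\sigma\}$ combined with the hyperbolicity of $\sigma$ implies that the portion of $P_n$ inside any fixed neighborhood $U$ of $\sigma$ takes up an asymptotically full fraction of $P_n$, and the ``slow middle'' of this portion -- points whose forward and backward iterates both remain in $U$ for a time $\to\infty$ with $n$ -- still has positive density. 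Intersecting these positive-density sets I obtain $y_n\in P_n$ that is $\eta'_n$-Pliss for both bundles (with $\eta'_n\nearrow\eta$), satisfies $y_n\to\sigma$, and has the property that for each fixed $k$ the first $k$ iterates $\phi_i^{X_n}(y_n)$ all lie in $U$ once $n$ is sufficiently large.

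Passing to a subsequence I let $E_n^{cs}(y_n)\to E$ and $E_n^{cu}(y_n)\to F$ in the respective Grassmannians of $T_\sigma M$; then $\dim E=l+1$ and $\dim F=d-l$. For each fixed $k\ge 1$ and $n$ large, $\phi_i^{X_n}(y_n)\to\sigma$ for $0\le i\le k$, so by $\Phi^{X_n}$-invariance $E_n^{cs}(\phi_i^{X_n}(y_n))=\Phi_i^{X_n}E_n^{cs}(y_n)\to\Phi_i^X(E)$, and continuity of the second exterior power in both the base point and the subspace argument gives
$$\frac{1}{k}\sum_{i=0}^{k-1}\log\bigl\|\wedge^2\Phi_1^X|_{\Phi_i^X(E)}\bigr\|\le\lim_{n\to\infty}(-\eta'_n)=-\eta,$$
proving (1); item (2) is obtained identically from $E_n^{cu}$ and the mini-norm. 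For the index bound, the sectional-contracting property of the $(l+1)$-dimensional $E$ at the hyperbolic zero $\sigma$ forces the top two Lyapunov exponents of $\Phi_t^X|_E$ to sum to $\le -\eta<0$, so the remaining $l$ exponents are strictly negative; this yields an $l$-dimensional stable subspace of $T_\sigma M$ and hence ${\rm Ind}(\sigma)\ge l$. The symmetric argument applied to $F$ gives ${\rm Ind}(\sigma)\le l+1$.

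The hard part will be the degeneracy of the flow direction at the singularity. As $y_n\to\sigma$ the unit vector $X_n(y_n)/\|X_n(y_n)\|$ does not vanish but converges to some nonzero line in $T_\sigma M$, so the ``center'' component of $E_n^{cs}$ and $E_n^{cu}$ persists in the limit and must be tracked at the Grassmannian level rather than as a Poincar\'e-flow object -- this is precisely where the extended linear Poincar\'e flow of \cite{LGW05} enters. Equally delicate is arranging $y_n$ to be Pliss for both bundles while lying deep in the near-$\sigma$ slow segment of $P_n$, so that enough iterates stay close to $\sigma$ for the sectional estimates to descend cleanly to the limiting subspaces $E$ and $F$; here the Hausdorff hypothesis $P_n\to\Gamma\cup\{\sigma\}$ is what guarantees the three positive-density conditions have a common point.
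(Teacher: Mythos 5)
Your overall strategy -- use Corollary~\ref{Cor:2-Form-Basic-Property} on $P_n$, run a Pliss-type selection, and pass to the Grassmannian limit at $\sigma$ -- is the same as the paper's, and the continuity argument that transfers the estimate to $E$ and $F$ is fine. But there are two concrete gaps, and the route you chose makes the second one worse than it needs to be.

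First, the claim that you can choose a single $y_n\in P_n$ that is $\eta'_n$-Pliss simultaneously for $E_n^{cs}$ and for $E_n^{cu}$ and lies deep in the slow middle is not justified and is generally false. If $c_1(\eta'), c_2(\eta')$ are the Pliss densities for the two bundles and $1-\delta_n$ is the density of your slow middle, the inclusion--exclusion bound requires $c_1(\eta')+c_2(\eta')>1+\delta_n$; there is no reason for $c_1+c_2>1$, and in fact $c_i(\eta')\to 0$ as $\eta'\nearrow\eta$, so this degenerates precisely in the limit you want to take. Fortunately the lemma does not ask $E$ and $F$ to arise from the same sequence of base points; the paper constructs $E$ and then says ``for the second item, we only need to consider $-X$,'' producing $F$ by a completely separate (and independent) run of the argument. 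You should drop the simultaneity. Related to this: combining $\eta'_n\nearrow\eta$ with a \emph{fixed} positive Pliss density is not possible in one shot; the paper instead works with a fixed margin $\epsilon>0$, produces $E_\epsilon$ with estimate $-\eta+2\epsilon$, and only at the very end lets $\epsilon\to 0$ and takes a Grassmannian subsequential limit. Your one-shot version can be repaired along the same lines (fix $\eta'$, get $E_{\eta'}$, then let $\eta'\to\eta$), but as written the order of quantifiers is wrong. Incidentally, the paper's proof of this step is a genuinely different two-stage Pliss: first Pliss on $P_n$ to get $p_n\to y\in\Gamma\cup\{\sigma\}$, then a second Pliss along the forward $X$-orbit of $y$, exploiting $\phi_{n_j}(y)\to\sigma$; this neatly avoids the slow-middle density bookkeeping.

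Second, and more seriously, your index-bound argument is broken. You write that the sectional contraction of the $(l+1)$-dimensional $E$ ``forces the top two Lyapunov exponents of $\Phi_t^X|_E$ to sum to $\le -\eta$,'' but this presupposes that $E$ is a $\Phi_t$-invariant subspace of $T_\sigma M$. The Grassmannian limit $E=\lim_j\Phi_{n_j}^X(E(y))$ is not invariant in general (invariance would require convergence of the full sequence $\Phi_n(E(y))$, not merely a subsequence), so ``Lyapunov exponents of $\Phi_t^X|_E$'' is not well-defined and the conclusion about $l$ negative exponents of $DX(\sigma)$ does not follow. The paper instead uses a dimension count that avoids invariance entirely: if ${\rm Ind}(\sigma)<l$, then $\dim(E\cap E^u(\sigma))\ge (l+1)+(d-{\rm Ind}(\sigma))-d\ge 2$, so there is a two-plane $L\subset E\cap E^u(\sigma)$; but the sectional estimate gives $|\det(\Phi_k|_L)|\le e^{-k(\eta-2\epsilon)}$ because $L\subset E$, while $|\det(\Phi_k|_L)|$ grows exponentially because $L\subset E^u(\sigma)$ -- a contradiction. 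The reverse inequality comes from $F$ (equivalently $-X$). You should replace your Lyapunov-exponent paragraph with this dimension count, which uses $E$ and $F$ exactly as the non-invariant objects they are.
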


\begin{proof}
Let the hyperbolic splitting of $P_n$ be
$$
T_{P_n}M=E^s(P_n)\oplus  \langle X_n(P_n)\rangle\oplus E^u(P_n).
$$

Consider the $X_n$-invariant subspace
$$E_n=E^s(P_n)\oplus\langle X_n(P_n)\rangle$$
on $P_n$. Since $P_n$ tends to the homoclinic loop associated to $\sigma$, their periods must tend to infinity as $n\to\infty$. For $n$ large enough, you can apply Corollary \ref{Cor:2-Form-Basic-Property} to get the following estimations
$$
\prod_{i=0}^{[\pi(x_n)]-1}\|\wedge^2\Phi^{X_n}_1|_{E_n(\phi^{X_n}_{i}(x_n))}\|\leq
{\rm e}^{-\eta\pi(x_n)}
$$
for any $x_n\in P_n={\rm Orb}(x_n)$. Then for any $\epsilon>0$, Pliss Lemma (\cite{Pli72}) gives some point $p_n\in P_n$ satisfying
$$
\frac 1k \sum_{i=0}^{k-1}\log \|\wedge^2\Phi^{X_n}_1|_{\Phi^{X_n}_i(E_n(p_n))}\|\leq -\eta+\epsilon,\ k=1,2,\cdots
$$

Assume $p_n$ tends to $y\in\Gamma\cup\{\sigma\}$. Taking some subsequence if necessary, one can assume $E_n(p_n)\to E(y)$, then we have
$$
\frac 1k \sum_{i=0}^{k-1}\log \|\wedge^2\Phi^{X}_1|_{\Phi^{X}_i(E(y))}\|\leq -\eta+\epsilon,\ k=1,2,\cdots
$$
Now the Pliss Lemma $\cite{Pli72}$ allows us to find $n_j\to\infty$ such that
$$
\frac 1k \sum_{i=0}^{k-1}\log \|\wedge^2\Phi^{X}_1|_{\Phi^{X}_{i+n_j}(E(y))}\|\leq -\eta+2\epsilon,\ k=1,2,\cdots
$$
Since $\phi_{n_j}(y)$ tends to $\sigma$ as $j\to\infty$, we derive a subspace $E\subset T_{\sigma}M$ with ${\rm dim}E={\rm dim}E_n(p_n)=l+1$ and
$$
\frac 1k \sum_{i=0}^{k-1}\log \|\wedge^2\Phi^{X}_1|_{\Phi^{X}_{i}(E)}\|\le -\eta+2\epsilon,\ k=1,2,\cdots
$$
So, $E$ is sectional contracting under $\Phi^X_t$. Notice that we can choose the constant $\epsilon$ arbitrarily small, this give us the proof of first item.

For the second item, we only need to consider $-X$.

Now for the estimation of the index of $P_n$, if we assume ${\rm Ind}(\sigma)<l={\rm Ind}(P_n)$, then
$$
{\rm dim}(E\cap E^u(\sigma))\geq{\rm dim}E+{\rm dim}E^u(\sigma)-d\geq l+1+d-(l-1)-d=2~.
$$

However, since $E$ is sectional contracting and $E^u(\sigma)$
is sectional expanding, this is absurd. So  $l={\rm Ind}(P_n)\leq{\rm Ind}(\sigma)$.
For the other side of the inequality, we
only need to consider $-X$, and the same argument as above will show
that $l={\rm Ind}(P_n)\geq{\rm Ind}(\sigma)-1$. This finishes the
proof of the lemma.
\end{proof}

\begin{Remark}
From this lemma and its proof, one can see
\begin{itemize}
\item If some periodic orbit is sufficiently close to a homoclinic loop associated to some singularity $\sigma$ of a star flow, then the index of the periodic orbit could only be ${\rm Ind}(\sigma)-1$ or ${\rm Ind}(\sigma)$.
\item In this lemma, we do not need to assume that the star flow is generic.
\end{itemize}
\end{Remark}

~

Let us recall some basic definitions in \cite{LGW05}. Denote by
$$
G^1=G^1(M^d)=\{L:L \textrm{ is a 1-dimensional subspace of } T_xM^d, x\in M^d \}
$$
the Grassmannian manifold of $M^d$. Given $X\in{\cal X}^1(M^d)$, the tangent flow $\Phi_t$ induces a flow
\begin{eqnarray*}
\Phi_{t}:G^1 & \to & G^1 \\
L & \mapsto & \Phi_{t}(L)
\end{eqnarray*}
on $G^1$.

Let $\beta :G^1\to M^d$ and $\xi :TM^d\to M^d$ be the corresponding bundle projections. It naturally induces a (pullback) bundle
$$
\beta^*(TM^d)=\{(L,v)\in G^1\times TM^d: \beta (L)=\xi (v)\}.
$$
Then $\beta^*(TM^d)$ is a $d$-dimensional vector bundle over $G^1$ with the bundle projection
$$\iota:\beta^*(TM^d)\to G^1$$
$$\iota(L,v)=L.$$

Then we could lift the tangent flow $\Phi_t$ to $\beta^*(TM^d)$, which is called {\it extended tangent flow}, (still) denoted by
$$\Phi_t:\beta^*(TM^d)\to\beta^*(TM^d)$$
$$\Phi_t(L,v)=(\Phi_t(L),\Phi_t(v)).$$
Let
$$\cP=\{(L,v)\in\beta^*(TM^d): v\in L\}.$$
This is a $1$-dimensional subbundle of $\beta^*(TM^d)$ over $G^1$, which is invariant under any extended tangent flow.
Similarly, we could define the normal bundle of $\cP$ as follows
$$\cN=\cP^\perp=\{(L,v)\in\beta^*(TM^d): v\perp L\}.$$
Then $\cN$ is a $(d-1)$-dimensional subbundle of $\beta^*(TM^d)$ over $G^1$. Now for every $X\in{\cal X}^1(M^d)$, we could define the {\it extended Poincar\'e flow} of $X$
$$
\psi_t=\psi^X_t: \cN\to\cN
$$
to be
$$
\psi_t(L,v)=\pi(\Phi_t(L,v)), \qquad\forall~ (L,v)\in\cN,
$$
where $\pi$ is the orthogonal projection from $\beta^*(TM^d)$ to $\cN$ along $\cP$.

For a compact invariant set $\Lambda$ of $X\in{\cal X}^1(M^d)$, we denote
$$
B(\Lambda)=\{L\in G^1:\beta(L)\in\Lambda, \exists X_n\to X, p_{n}\in{\rm Per}(X_n), {\rm Orb}(p_n,X_n)\hookrightarrow_n\Lambda,
$$
$$
\qquad\textrm{ such that }\langle X_n(p_n)\rangle\to L\}.
$$

$$
B^j(\Lambda)=\{L\in G^1:\beta(L)\in\Lambda, \exists X_n\to X, p_{n}\in{\rm Per}(X_n),{\rm Ind}(p_n)=j,
$$
$$
\qquad{\rm Orb}(p_n,X_n)\hookrightarrow_n\Lambda, \textrm{ such that }\langle X_n(p_n)\rangle\to L\}.
$$
Here ${\rm Orb}(p_n,X_n)\hookrightarrow_n\Lambda$ means that the Hausdorff upper limit of ${\rm Orb}(p_n,X_n)$ is contained in $\Lambda$.

~

\begin{Lemma}\label{Lem:Expon-Sing}
Let $X\in{\cal X}^*(M^d)$ and $\sigma\in {\Sing}(X)$. Assume that the Lyapunov exponents of $\Phi_t(\sigma)$ are
$$
\lambda_1\leq\cdots\leq\lambda_s<0<\lambda_{s+1}\leq\cdots\leq\lambda_d.
$$
If $C(\sigma)$ is nontrivial, then
\begin{enumerate}
\item either $\lambda_{s-1}\neq\lambda_s$ or $\lambda_{s+1}\neq\lambda_{s+2}$.
\item if $\lambda_{s-1}=\lambda_s$, then $\lambda_s+\lambda_{s+1}<0$.
\item if $\lambda_{s+1}=\lambda_{s+2}$, then $\lambda_s+\lambda_{s+1}>0$.
\item if $\lambda_{s-1}\neq\lambda_s$ and $\lambda_{s+1}\neq\lambda_{s+2}$, then $\lambda_s+\lambda_{s+1}\neq0$.
\end{enumerate}

\end{Lemma}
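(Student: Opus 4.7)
The plan is to reduce, via $C^1$-small perturbations preserving the germ of $X$ at $\sigma$, to the situation of Lemma \ref{Lem:Index-Estimation}, and then combine its conclusion with the dominated splitting from the extended linear Poincar\'e flow to pin down the Lyapunov spectrum of $DX(\sigma)$.

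First I would invoke the $C^1$ connecting lemma for chains (Lemma \ref{Lem:Pseudo-Connecting} together with its strong form in the following Remark) to produce, for every $C^1$ neighborhood $\mathcal{U}$ of $X$, a vector field $Y\in\mathcal{U}$ that coincides with $X$ on a small neighborhood of $\sigma$ (so $DY(\sigma)=DX(\sigma)$ and the Lyapunov exponents at $\sigma$ are unchanged) and that admits a homoclinic orbit $\Gamma$ of $\sigma$. A further arbitrarily small perturbation at the loop (via bifurcation of the homoclinic saddle-connection, or a second application of Hayashi's connecting lemma) yields star vector fields $Y_n\to X$ in $C^1$ and periodic orbits $P_n$ of $Y_n$ whose Hausdorff limit contains $\Gamma\cup\{\sigma\}$. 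Passing to a subsequence, $l:={\rm Ind}(P_n)$ may be assumed constant, and Lemma \ref{Lem:Index-Estimation}, applied with $Y$ in the role of $X$, forces $l\in\{s-1,s\}$ and produces subspaces $E,F\subset T_\sigma M$ of dimensions $l+1$ and $d-l$ that are, respectively, sectional contracting and sectional expanding under $\Phi^X_t$.

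Writing $\mu_1\le\cdots\le\mu_{l+1}$ for the Lyapunov exponents of $\Phi^X_t|_E$ and $\nu_1\le\cdots\le\nu_{d-l}$ for those of $\Phi^X_t|_F$, the min-max inequalities for restrictions give $\mu_i\ge\lambda_i$ and $\nu_i\le\lambda_{l+i}$. In case $l=s$, sectional contracting gives $\mu_s+\mu_{s+1}\le-\eta$, forcing $\sv(\sigma)=\lambda_s+\lambda_{s+1}\le-\eta<0$; in case $l=s-1$, sectional expanding gives $\nu_1+\nu_2\ge\eta$, forcing $\sv(\sigma)\ge\eta>0$. Either way $\sv(\sigma)\ne 0$, which already establishes item 4 (indeed without the gap hypothesis). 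For items 1--3 a genuine spectral gap is needed, which I would extract from the uniformly dominated hyperbolic splitting $\mathcal{N}_{P_n}=N^s_n\oplus N^u_n$ of the scaled linear Poincar\'e flow $\psi^{*,Y_n}_t$, with $\dim N^s_n=l$ (uniformity is Lemma \ref{Lem:Basic-Property}). Lifting to $G^1$ along $p\mapsto\langle Y_n(p)\rangle$ and extracting a limit direction $L=\lim_n\langle Y_n(p_n)\rangle$ at $\sigma$, the splitting descends to a dominated splitting $L^\perp=N^s_L\oplus N^u_L$ of $\psi^*_t$ over $\sigma$. The Lyapunov exponents of $\psi^*_t|_{L^\perp}$ are the shifts $\lambda_i-\alpha$ (where $\alpha$ is the Lyapunov exponent of $\Phi^X_t$ on $L$, and $L$ necessarily lies in $E^s_\sigma\cup E^u_\sigma$ depending on which branch of $\Gamma$ the $p_n$ approach $\sigma$ along), so the domination forces the $l$-th and $(l+1)$-th smallest of them to be strictly separated. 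Choosing $L\subset E^u_\sigma$ in case $l=s-1$ and $L\subset E^s_\sigma$ in case $l=s$ (by appropriately selecting the sample points $p_n$, or by replacing $X$ with $-X$), this separation unravels to $\lambda_{s-1}<\lambda_s$ (contrapositive of item 2) and $\lambda_{s+1}<\lambda_{s+2}$ (contrapositive of item 3), respectively. Item 1 is then immediate: if both $\lambda_{s-1}=\lambda_s$ and $\lambda_{s+1}=\lambda_{s+2}$ held, both admissible values of $l$ would be incompatible with the gap conditions just derived, contradicting the existence of $(P_n)$.

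The hard part will be the last step: cleanly passing the $\psi^*_t$-domination from the periodic orbits up to the singular fiber over $\sigma$, where the scaling factor $\|X\|^{-1}$ blows up, and converting the resulting domination into an honest spectral gap of $DX(\sigma)$. One must identify which lifted directions $L$ arise as limits of $\langle Y_n(p_n)\rangle$ (governed by where the Pliss-type sample points $p_n$ sit on $P_n$ relative to the homoclinic loop), handle possible Jordan blocks of $DX(\sigma)|_{L^\perp}$, and check that the index-$l$ construction is flexible enough to place $L$ in the correct half of $T_\sigma M$ so that the dominated splitting produces the required gap at the right position in the Lyapunov spectrum.
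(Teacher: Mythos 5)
Your proposal follows the same overall route as the paper: create a homoclinic loop by the connecting lemma (keeping $DX(\sigma)$ fixed), approximate it by periodic orbits, use Lemma \ref{Lem:Index-Estimation} to pin down $l\in\{s-1,s\}$ and extract the sectionally contracting/expanding subspaces, and then transfer the uniform dominated splitting of the periodic Poincar\'e flows through the extended linear Poincar\'e flow over $G^1$ to obtain a spectral gap at $\sigma$. Two remarks on how your treatment compares with the paper's. For $\sv(\sigma)\neq0$, your direct min-max argument (the exponents of vectors in the $(l+1)$-plane $E$ dominate the $l+1$ smallest exponents of $DX(\sigma)$, and the $\wedge^2$-estimate bounds the sum of the two largest by $-\eta$) is a clean shortcut: the paper instead proves the spectral gap (item 1) first and then, in a separate Claim, compares $E$ with the generalized eigenspace $E^{cs}$. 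Your ordering avoids that dependency. For items 1--3, you correctly identify that the crux is pushing the domination to the singular fiber, but you frame it in terms of the \emph{scaled} flow $\psi^*_t$ and Lyapunov-exponent shifts along a single limit line $L$; the paper's actual resolution is tidier: it works with the \emph{unscaled} extended linear Poincar\'e flow over the whole compact invariant set $\Delta^u(\sigma)\subset G^1(\sigma)$, chooses the metric so that $E^s(\sigma)\perp E^u(\sigma)$, and then observes that $E^s(\sigma)\subset\cN_L$ for every $L\in\Delta^u(\sigma)$ with $\psi_t|_{E^s(\sigma)}=\Phi_t|_{E^s(\sigma)}$. This identifies an invariant $s$-dimensional constant subbundle sitting inside the $(d-1)$-dimensional dominated bundle of index $s-1$, and the induced splitting $E^s(\sigma)=E^{ss}\oplus E^c$ yields the uniform gap $\lambda_{s-1}\le\lambda_s-2\eta$ without having to track a single limit direction $L$ or worry about the scaling factor blowing up. The piece you flagged as ``the hard part'' is precisely where this orthogonality device does the work.
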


\begin{proof}
Fix $\sigma\in{\rm Sing}(X)$ such that $C(\sigma)$ is nontrivial and
denote $s={\rm Ind}(\sigma)$. By changing the Riemannian metric, we
can assume that $E^s(\sigma)\bot E^u(\sigma)$. Since $C(\sigma)$ is
nontrivial, there  exist $x\in C(\sigma)\cap
W^u(\sigma)\setminus{\{\sigma\}}$ and $y\in C(\sigma)\cap
W^s(\sigma)\setminus{\{\sigma\}}$. For any small $C^1$ neighborhood ${\cal U}$ of $X$, according to Lemma~\ref{Lem:Pseudo-Connecting} and its remark, there exists a neighborhood $V$ of $\sigma$, and $Y\in {\cal U}$ such that $Y=X$ on $V$ and $y=\phi_t^Y(x)$ for some $t>0$. By considering $\phi_N(y)$ and $\phi_{-N}(x)$ for $N>0$ large enough, we may assume that $x, y\in V$, which implies $\sigma_Y=\sigma$
exhibits a homoclinic orbit $\Gamma={\rm Orb}(z)$. Note that $X$ and $Y$ exhibit the same Lyapunov exponents at the singularity $\sigma_Y=\sigma$.

Choose two sequences
of regular points $x_n\rightarrow x$ and $y_n\rightarrow y$,
such that $\phi^Y_{t_n}(x_n)=y_n$. Connecting $x_n$ to $x$ and
$y_n$ to $y$, we derive a sequence of vector fields
$Y_n\rightarrow Y$ and $x_n\in {\rm Per}(Y_n)$ such that ${\rm
Orb}(x_n)$ converge to $\Gamma\cup\{\sigma\}$.

Considering the compact $Y$-invariant set $\Lambda=\Gamma\cup\{\sigma\}$, from Lemma \ref{Lem:Index-Estimation} we know that
$$
s-1\leq\lim_{n\rightarrow\infty}{\rm Ind}(x_n)\leq s
$$
which also implies that either $\beta(B^{s-1}(\Lambda))=\Lambda$ or $\beta(B^{s}(\Lambda))=\Lambda$. Assume the first case holds. Then the linear Poincar\'e flows $\psi^{Y_n}_t$ of all these periodic orbits admit the uniform dominated splitting
$$
\frac{\|\psi^{Y_n}_t|_{N^s(x)}\|}{m(\psi^{Y_n}_t|_{N^u(x)})}\le {\rm e}^{-2\eta t};
$$
for some constant $\eta>0$ and $\forall x\in{\rm Orb}(x_n)$, $\forall t\ge 1$. Since the constant $\eta$ is uniform for any $n$ and the extended linear Poincar\'e flow is a continuous linear flow on a continuous bundle, by taking limits in this framework, we get a dominated splitting
$$
\cN_{\Delta}={\cal E}\oplus {\cal F}
$$
over $\Delta$ with ${\rm dim}{\cal E}=s-1$, ${\rm dim}{\cal F}=d-s$. Here $\Delta\subset G^1$ is the set of limit points of $\{\langle Y_n(x)\rangle: x\in P_n\}$ and contained in $B^{s-1}(\Lambda)$. Since $P_n$ converges to the homoclinic loop, then we can choose $p_n\in{\rm Orb}(x_n)$ such that
$$
\lim_{n\to\infty}\langle X(p_n)\rangle\subset E^u(\sigma).
$$
This implies that $\Delta^u(\sigma)= \{L\in \Delta:
L\subset E^u(\sigma)\}$, which is a nonempty and compact invariant
set under $\Phi_t^Y=\Phi_t^X$ when restricted on $G^1(\sigma)=\{L\in G^1: \beta(L)=\sigma\}$. If we restrict the extended linear Poincar\'e
flow on $\cN_{\Delta^u(\sigma)}$, it will also admit the dominated
splitting with the same constant $2\eta$. Since we have assumed
$E^s(\sigma)\bot E^u(\sigma)$, we have
$$
E^s(\sigma)\subset\cN_{\Delta^u(\sigma)}
$$
and
$$
\psi^Y_t|_{E^s(\sigma)\cap\cN_{\Delta^{u}(\sigma)}}=\Phi^Y_t|_{E^s(\sigma)}.
$$
Since ${\rm dim}N^s=s-1$ and ${\rm dim}E^s(\sigma)=s$, $E^s(\sigma)$
admits a dominated splitting w.r.t. the tangent flow $\Phi^Y_t$ with
the same constant $\eta$, i.e.,
$$
E^s(\sigma)=E^{ss}(\sigma)\oplus E^c(\sigma)
$$
is a $\Phi^Y_t$-invariant splitting, where ${\rm dim}E^{ss}(\sigma)=s-1$ and ${\rm dim}E^c(\sigma)=1$. Moreover, it satisfies
$$
\frac{\parallel\Phi^Y_t|_{E^{ss}(\sigma)}\parallel}{m(\Phi^Y_t|_{E^c(\sigma)})}\leq {\rm e}^{-2\eta t}.
$$
This implies that the Lyapunov exponents of $\sigma_Y=\sigma$ satisfy
$\lambda_{s-1}\leq\lambda_s-2\eta$. Since $Y=X$ on a small neighborhood of $\sigma$, the same inequality holds for $X$.

If we assume $\beta(B^{s}(\Lambda))=\Lambda$, then the same analysis
shows that $\lambda_{s+1}\leq\lambda_{s+2}-2\eta$. This proves the
first item of this lemma.

For the rest three items, we need
\begin{Claim}
\begin{itemize}
\item If $\beta(B^{s}(\Lambda))=\Lambda$, then $\lambda_{s}+\lambda_{s+1}\leq-\eta$.
\item If $\beta(B^{s-1}(\Lambda))=\Lambda$, then $\lambda_{s}+\lambda_{s+1}\geq\eta$.
\end{itemize}
\end{Claim}

\begin{proof}[Proof of Claim:]
We just prove the first item, then for the second one we only need to consider $-Y$.
Recall the definition of $\beta(B^s(\Lambda))=\Lambda$, which means the homoclinic loop $\Lambda$ is the Hausdorff limit of periodic orbits Orb$(x_n)$ of $Y_n$ with index $s$. Applying Lemma \ref{Lem:Index-Estimation}, we know that there exists an $(s+1)$-dimensional subspace $E\subset T_{\sigma}M$, such that
$$
\frac{1}{k}\sum_{i=0}^{k-1}\log\|\wedge^2\Phi_1^Y|_{\Phi_i^Y(E)}\|\leq-\eta,\ k=1,2,\cdots
$$

On the other hand, $\beta(B^s(\Lambda))=\Lambda$ implies that $\lambda_{s+1}<\lambda_{s+2}$.
Denote by $E^{cs}$ the direct sum of the generalized eigenspaces associated to $\lambda_i, i=1,2,\cdots,s+1$, which is an $(s+1)$-dimensional $\Phi_t^Y$-invariant subspace of $T_{\sigma}M$. Then the dominated splitting on $T_{\sigma}M$ implies $E^{cs}$ must admit the estimation above, i.e.,
$$
\frac{1}{k}\sum_{i=0}^{k-1}\log\|\wedge^2\Phi_1^Y|_{\Phi_i^Y(E^{cs})}\|\leq-\eta,\ k=1,2,\cdots
$$

However, if we assume $\lambda_s+\lambda_{s+1}>-\eta$, we can pick a pair of eigenvectors $u$ and $v$ associated to $\lambda_s$ and $\lambda_{s+1}$ respectively. So we have the following equalities
$$
\|\Phi^Y_t(u)\|~=~ {\rm e}^{\lambda_st}\| u\|~, \qquad\forall t>0~,
$$
$$
\|\Phi^Y_t(v)\|~=~ {\rm e}^{\lambda_{s+1}t}\| v\|~, \qquad\forall t>0~.
$$
Since we have assumed $E^s(\sigma)\perp E^u(\sigma)$, which implies $u\perp v$, so we have
$$
\frac{1}{k}\sum_{i=0}^{k-1}\log\|\wedge^2\Phi_1^Y|_{\Phi_i^Y(E^{cs})}\|\geq\lambda_s+\lambda_{s+1}>-\eta,\ k=1,2,\cdots
$$
This is a contradiction. So we must have $\lambda_{s} + \lambda_{s+1} \leq -\eta$. This finishes the proof of the claim.
\end{proof}

Now we prove item 2 of this lemma. If $\lambda_{s-1}=\lambda_s$,
then by the analysis above, the homoclinic loop
$\Lambda=\Gamma\cup\{\sigma\}$ could only be accumulated by periodic
orbits of index $s$. This proves $\beta(B^{s}(\Lambda))=\Lambda$.
So we can apply the first item of the claim to show that $\lambda_s+\lambda_{s+1}\leq-\eta$.

Item 3 is just item 2 of $-X$.

Item 4 could be proved in the same way. In this case, we have two possibilities. Either $\beta(B^{s}(\Lambda))=\Lambda$ or $\beta(B^{s-1}(\Lambda))=\Lambda$. Corresponding to these two cases, the claim guarantee that we have either $\lambda_{s}+\lambda_{s+1}\leq-\eta$ or $\lambda_{s}+\lambda_{s+1}\geq\eta$. This finishes the proof of this lemma.
\end{proof}

\begin{Remark}
In the proof of this lemma, you can see that $|\lambda_s+\lambda_{s+1}|\geq\eta$. Moreover, either $\lambda_{s-1}$ and $\lambda_{s}$, or $\lambda_{s+1}$ and $\lambda_{s+2}$ should admit a uniform gap which is $2\eta$.
\end{Remark}

\begin{Corollary}\label{Cor:Saddle-Value}
For any $X\in{\cal X}^*(M^d)$ and any $\sigma \in {\rm Sing}(X)$, if
$C(\sigma)$ is nontrivial, then
$$
{\rm sv}(\sigma)\neq0.
$$
\end{Corollary}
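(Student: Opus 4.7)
The plan is to derive the corollary from Lemma \ref{Lem:Expon-Sing} by a short case analysis on the spacing of the Lyapunov exponents around the gap at zero. All four items of the lemma are available, so the argument should reduce to checking that every possibility for the pair of gaps $(\lambda_{s-1}, \lambda_s)$ and $(\lambda_{s+1}, \lambda_{s+2})$ forces $\sv(\sigma) = \lambda_s + \lambda_{s+1}$ to be nonzero. Since $C(\sigma)$ is nontrivial, $\sigma$ is not a source or sink, so $1 \le s \le d-1$ and the saddle value is well-defined; boundary cases where $\lambda_{s-1}$ or $\lambda_{s+2}$ does not exist can be handled by treating the corresponding equality $\lambda_{s-1}=\lambda_s$ (resp. $\lambda_{s+1}=\lambda_{s+2}$) as vacuously false.

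The argument I propose is a proof by contradiction. Assume $\sv(\sigma) = \lambda_s + \lambda_{s+1} = 0$. I would then split into three cases according to whether equality holds at either end of the central gap. If $\lambda_{s-1} = \lambda_s$, item 2 of Lemma \ref{Lem:Expon-Sing} yields $\lambda_s+\lambda_{s+1} < 0$, contradicting the assumption. If instead $\lambda_{s+1} = \lambda_{s+2}$, item 3 gives $\lambda_s+\lambda_{s+1} > 0$, again a contradiction. In the remaining case, both $\lambda_{s-1} \neq \lambda_s$ and $\lambda_{s+1} \neq \lambda_{s+2}$, and item 4 gives $\lambda_s+\lambda_{s+1} \neq 0$ directly, which is the desired contradiction. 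Note that item 1 ensures these cases are exhaustive (and rules out the pathological situation where items 2 and 3 would give contradictory conclusions at the same time).

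There is no real obstacle: the work has already been done inside Lemma \ref{Lem:Expon-Sing}, where the dominated splitting obtained from the extended linear Poincaré flow is transferred to $T_\sigma M$ to produce the quantitative gap $|\lambda_s+\lambda_{s+1}| \ge \eta$ in the generic subcases, and the strict inequalities in the degenerate subcases. The corollary is simply the statement that, regardless of which subcase of Lemma \ref{Lem:Expon-Sing} we are in, zero is never attained by $\sv(\sigma)$. So the proof should fit comfortably in a few lines, formatted as the case split above.
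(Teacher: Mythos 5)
Your proposal is correct and matches the paper's intent exactly: the paper gives no separate proof for Corollary~\ref{Cor:Saddle-Value} precisely because it is meant to be read off from items~2, 3, and 4 of Lemma~\ref{Lem:Expon-Sing} by the case split you describe, with item~1 guaranteeing consistency. Your handling of the boundary cases (where $\lambda_{s-1}$ or $\lambda_{s+2}$ does not exist) is also the right convention.
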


\begin{Lemma}\label{Lem:Homoclinic-Loop}
Let $X\in{\cal X}^*(M^d)$ and $\sigma \in {\rm Sing}(X)$. Let
$\Gamma={\rm Orb}(x)$ be a homoclinic orbit associated to $\sigma$.
Assume there exists a sequence of vector fields $\{X_n\}$ converging
to $X$ in the $C^1$ topology and a sequence of  periodic orbits
$P_n$ of $X_n$ such that $P_n$ converges to $\Gamma\cup\{\sigma\}$
in the Hausdorff topology. Then we have
$$
\lim_{n\rightarrow\infty}{\rm Ind}(P_n)={\rm Ind}_p(\sigma),
$$
i.e., for $n$ large enough, ${\rm Ind}(P_n)={\rm Ind}_p(\sigma)$.
\end{Lemma}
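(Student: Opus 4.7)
The plan is a short argument by contradiction that combines Lemma \ref{Lem:Index-Estimation} with the quantitative sign information on $\sv(\sigma)$ established inside the proof of Lemma \ref{Lem:Expon-Sing}. Set $s={\rm Ind}(\sigma)$. By Corollary \ref{Cor:Saddle-Value}, $\sv(\sigma)\neq 0$, so ${\rm Ind}_p(\sigma)\in\{s-1,s\}$ is well defined: it equals $s-1$ when $\sv(\sigma)>0$ and $s$ when $\sv(\sigma)<0$. Lemma \ref{Lem:Index-Estimation} applied to the sequence $(X_n,P_n)$ already restricts ${\rm Ind}(P_n)$ to $\{s-1,s\}$ for all large $n$, so the content of the lemma is to show that ${\rm Ind}(P_n)$ cannot oscillate and must select the value dictated by the sign of $\sv(\sigma)$.

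Suppose, toward a contradiction, that ${\rm Ind}(P_n)\neq {\rm Ind}_p(\sigma)$ for infinitely many $n$. Passing to a subsequence, I may assume ${\rm Ind}(P_n)=l$ is constant, with $l\in\{s-1,s\}$ and $l\neq {\rm Ind}_p(\sigma)$. Write $\Lambda=\Gamma\cup\{\sigma\}$. I first claim that $\beta(B^{l}(\Lambda))=\Lambda$. For any regular $y\in\Gamma$, choosing $p_n\in P_n$ with $p_n\to y$ gives $\langle X_n(p_n)\rangle \to \langle X(y)\rangle$, which therefore lies in $B^{l}(\Lambda)$ by the very definition. For $y=\sigma$, one chooses $p_n\in P_n$ with $p_n\to\sigma$ (possible since $P_n\to\Lambda$ in Hausdorff topology) and uses compactness of the fiber $G^1(\sigma)\subset G^1$ to extract a convergent subsequence of $\langle X_n(p_n)\rangle$, whose limit belongs to $B^{l}(\Lambda)$ and projects to $\sigma$.

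Now I invoke the Claim proved inside Lemma \ref{Lem:Expon-Sing}: $\beta(B^{s}(\Lambda))=\Lambda$ forces $\sv(\sigma)\leq -\eta<0$, while $\beta(B^{s-1}(\Lambda))=\Lambda$ forces $\sv(\sigma)\geq \eta>0$. If $l=s$, then $\sv(\sigma)<0$, hence by definition ${\rm Ind}_p(\sigma)=s=l$, contradicting $l\neq {\rm Ind}_p(\sigma)$. If $l=s-1$, then $\sv(\sigma)>0$, hence ${\rm Ind}_p(\sigma)=s-1=l$, again a contradiction. Therefore no such subsequence exists, and ${\rm Ind}(P_n)={\rm Ind}_p(\sigma)$ for $n$ large.

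The one place requiring minor care is the passage from Hausdorff convergence of $P_n$ (with fixed index $l$) to the equality $\beta(B^{l}(\Lambda))=\Lambda$; this is a routine compactness argument in the Grassmannian $G^1$. All the real work was already done in Lemma \ref{Lem:Index-Estimation} and in the proof of Lemma \ref{Lem:Expon-Sing}, so here the argument amounts to sign bookkeeping between the definition of the periodic index and the dichotomy provided by that Claim.
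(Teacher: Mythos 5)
Your proof is correct, and it is a genuinely different route from the paper's, although both ultimately rest on Lemma~\ref{Lem:Index-Estimation}. The paper argues directly: taking $\sv(\sigma)>0$ without loss of generality, it uses the dominated splitting $T_\sigma M=E^{ss}(\sigma)\oplus E^c(\sigma)\oplus E^u(\sigma)$ (available from Lemma~\ref{Lem:Expon-Sing}) and the fact that $\sv(\sigma)>0$ makes $E^c(\sigma)\oplus E^u(\sigma)$ sectionally expanding. If ${\rm Ind}(P_n)=s$, Lemma~\ref{Lem:Index-Estimation} produces an $(s+1)$-dimensional sectionally contracting subspace $E\subset T_\sigma M$; since $\dim E+\dim(E^c\oplus E^u)=(s+1)+(d-s+1)>d$, their intersection contains a $2$-plane, which cannot be simultaneously sectionally contracted and expanded. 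You instead pass to a subsequence with constant index $l$, identify $\beta(B^l(\Lambda))=\Lambda$, and then invoke the inner Claim of Lemma~\ref{Lem:Expon-Sing} to pin down the sign of $\sv(\sigma)$, from which ${\rm Ind}_p(\sigma)=l$ follows by definition. Both routes work; the paper's argument is self-contained at this point modulo the dominated splitting, while yours is shorter on the page but reuses an unnamed inner Claim whose applicability to the present $\Lambda$ (which is a genuine homoclinic loop of $X$ rather than of a perturbed $Y$) needs the brief observation --- which you tacitly make --- that the Claim's proof only exploits the loop structure and the accumulating periodic orbits, not the perturbative construction. One small point of hygiene: you write that Lemma~\ref{Lem:Index-Estimation} ``already restricts ${\rm Ind}(P_n)$ to $\{s-1,s\}$'' before you have fixed a subsequence of constant index, whereas the lemma as stated requires a fixed index $l$; the fix is trivial (first pass to a subsequence of constant index, then apply the lemma, then observe finitely many possible indices), but as written the order is slightly circular.
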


\begin{proof}
We have proved that the saddle value of $\sigma$ is not equal to
zero. Without loss of generality we assume $\sv(\sigma)>0$,
otherwise we consider $-X$. Then the periodic index ${\rm
Ind}_p(\sigma)=s-1$, where $s={\rm Ind}(\sigma)$. Moreover, the
Lyapunov exponents of $\sigma$ satisfies
$\lambda_{s-1}<\lambda_{s}$, which determines a dominated
splitting of $T_{\sigma}M$:
$$
T_{\sigma}M=E^{ss}(\sigma)\oplus E^c(\sigma)\oplus E^u(\sigma).
$$
Here $E^c(\sigma)$ is the eigenspace associated to $\lambda_s$, and the saddle value $\sv(\sigma)>0$ insures that the invariant subspace $E^c(\sigma)\oplus E^u(\sigma)$ is sectional expanding.

Since Lemma \ref{Lem:Index-Estimation} has guaranteed that ${\rm
Ind}(P_n)\geq s-1$ for $n$ large enough, so we only need to show
that ${\rm Ind}(P_n)>s-1$ leads to a contradiction.
If ${\rm Ind}(P_n)>s-1$, also according to Lemma \ref{Lem:Index-Estimation},
$T_\sigma M$ contains a sectional contracting subspace $E$ of dimension $s+1$.

Then we have
$$
{\rm dim}(E\cap(E^c(\sigma)\oplus E^u(\sigma)))\geq{\rm dim}E+{\rm dim}(E^c(\sigma)\oplus E^u(\sigma))-d
$$
$$
\qquad\qquad\geq s+1+d-s+1-d=2.
$$
However, we notice that $E$ is sectional contracting and $E^c(\sigma)\oplus E^{u}(\sigma)$ is sectional expanding. This is absurd. So we have proved ${\rm Ind}(P_n)\leq s-1$ for $n$ large enough. This finishes the proof of the lemma.
\end{proof}

\begin{Remark}
This lemma asserts that when a periodic orbit is close enough to a homoclinic loop associated to some singularity, then its index has to be equal to the periodic index of the singularity. When we consider another kind of critical elements, periodic orbits, this also holds. Precisely, if the periodic orbit $Q_n$ tends to a homoclinic orbit $\Gamma=\{{\rm Orb}(x)\}$ associated to some periodic orbit $P$, then we must have ${\rm Ind}(Q_n)={\rm Ind}(P)$ for $n$ large enough. The reason here is that $\Gamma\cup P$ is a hyperbolic set since $\Gamma$ should be a transverse homoclinic loop (see \cite{GaW06}).
\end{Remark}

\begin{Lemma}\label{Lem:Homogeneous}
Let $X\in {\cal X}^*(M^d)$ be a  $C^1$ generic vector field and
$\sigma\in {\rm Sing}(X)$. Then for every critical element $c$ in
$C(\sigma)$,
$$
{\rm Ind}_p(c)={\rm Ind}_p(\sigma).
$$
\end{Lemma}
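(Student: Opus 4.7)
The strategy is to produce, for each critical element $c \in C(\sigma)$ different from $\sigma$, a single sequence of periodic orbits on $C^1$-nearby star flows whose Hausdorff limit is a heteroclinic cycle containing both $\sigma$ and $c$. The common index of these periodic orbits is then pinned down twice, once from the dynamics at $\sigma$ and once from the dynamics at $c$, forcing ${\rm Ind}_p(\sigma) = {\rm Ind}_p(c)$.

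By the generic assumption, items 1 and 2 of Lemma \ref{Lem:Generic-Property} guarantee that $\sigma$ and $c$ lie in the same chain recurrent class robustly. Chain transitivity of $C(\sigma)$ provides chains from $\sigma$ to $c$ and from $c$ to $\sigma$, and applying Lemma \ref{Lem:Pseudo-Connecting} together with the remark that permits preservation of finitely many critical elements, I obtain $Y$ arbitrarily $C^1$-close to $X$, equal to $X$ on small neighborhoods of $\sigma$ and $c$, such that $Y$ admits orbits in $W^u(\sigma_Y)\cap W^s(c_Y)$ and in $W^u(c_Y)\cap W^s(\sigma_Y)$. Their union with $\sigma_Y$ and $c_Y$ is a heteroclinic cycle $\Lambda_Y$. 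A further application of the $\lambda$-lemma at the transverse heteroclinic intersections, combined with one more use of Lemma \ref{Lem:Pseudo-Connecting}, yields a periodic orbit on a still-smaller perturbation whose orbit is Hausdorff-close to $\Lambda_Y$. A diagonal argument then produces star vector fields $X_n \to X$ and periodic orbits $Q_n$ of $X_n$ Hausdorff-converging to an invariant set $\Lambda \supset \{\sigma,c\}$ for $X$, with $\pi(Q_n) \to \infty$; passing to a subsequence, ${\rm Ind}(Q_n) \equiv l$ is constant.

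The value $l$ is now identified twice. Choose $p_n \in Q_n$ tending to a point $y_\sigma$ on the $c \to \sigma$ branch of $\Lambda$, so that $\phi_t^X(y_\sigma) \to \sigma$ as $t \to +\infty$. The Pliss-lemma argument of Lemma \ref{Lem:Index-Estimation}, applied to the invariant bundle $E^s(Q_n) \oplus \langle X_n(Q_n) \rangle$, then delivers a sectional contracting $\Phi_t^X$-invariant subspace of $T_\sigma M$ of dimension $l+1$. Combined with the Lorenz-like splitting at $\sigma$ provided by Theorem \ref{Thm:Anal-sing}, the same dimension counting against the complementary sectional expanding subbundle as in the proof of Lemma \ref{Lem:Homoclinic-Loop} forces $l = {\rm Ind}_p(\sigma)$. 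Repeating the construction with $y_c$ on the $\sigma \to c$ branch, and exploiting either the Lorenz-like structure at $\rho$ (when $c = \rho$ is a singularity) or the transverse hyperbolic homoclinic structure at $P$ supplied by the $\lambda$-lemma (when $c = P$ is a periodic orbit, so the remark following Lemma \ref{Lem:Homoclinic-Loop} applies), one likewise obtains $l = {\rm Ind}_p(c)$.

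The main technical obstacle is the construction of the single sequence $Q_n$ Hausdorff-approximating the entire cycle $\Lambda_Y$. Lemma \ref{Lem:Pseudo-Connecting} produces one orbit connection at a time, so assembling a periodic orbit that shadows the whole cycle requires iterating the $\lambda$-lemma at each transverse intersection while keeping the successive perturbations disjoint from the neighborhoods on which the vector field is frozen near $\sigma$ and $c$. One must also arrange that the resulting $Q_n$ spends enough time near each of $\sigma$ and $c$ for the Pliss argument to detect the correct Lyapunov structure at both ends — a matter of letting the shadowing parameters of the $\lambda$-lemma tend to zero in a controlled way as $n \to \infty$.
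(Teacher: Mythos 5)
Your strategy differs from the paper's, and it has a genuine gap at the key step where you identify the index $l$ of the periodic orbits $Q_n$ at \emph{both} $\sigma$ and $c$. The Pliss lemma argument of Lemma~\ref{Lem:Index-Estimation} gives you \emph{some} Pliss point on $Q_n$, whose limit $y$ lies on the heteroclinic cycle; pushing forward along the orbit of $y$ delivers a sectional contracting $(l{+}1)$-dimensional subspace at whichever singularity $\phi_t^X(y)$ approaches as $t\to+\infty$. You do not get to choose where the Pliss point falls, so you get information at one of $\sigma$, $c$ — not both. Your phrase ``choose $p_n\in Q_n$ tending to a point $y_\sigma$ on the $c\to\sigma$ branch'' is precisely the unjustified step: nothing forces a Pliss point to appear there. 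Without both identifications, transitivity does not close. Lemma~\ref{Lem:Homoclinic-Loop} is stated for homoclinic loops, not heteroclinic cycles, and does not directly pin down the index when the limit set contains two critical elements.

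The paper avoids the problem entirely by never producing periodic orbits shadowing the whole cycle. After Sublemma~\ref{Lem:Heteroclinic-Cycle} it builds a \emph{two-parameter} family $Y_{r,t}$ of star vector fields and a continuous family of periodic orbits $\Gamma_{r,t}$, so that $\Gamma_{r,t}$ degenerates to a homoclinic loop of $\sigma_Y$ as $r\to0$ and to a homoclinic loop of $c_Y$ as $t\to0$. Lemma~\ref{Lem:Homoclinic-Loop} is then applied twice, each time to a genuine homoclinic loop, to conclude ${\rm Ind}(\Gamma_{r_0,t_0})={\rm Ind}_p(c_Y)$ for $t_0$ small and ${\rm Ind}(\Gamma_{r_1,t_0})={\rm Ind}_p(\sigma_Y)$ for $r_1$ small. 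If these differ, continuity of Lyapunov exponents along the one-parameter slice $r\mapsto\Gamma_{r,t_0}$ forces a nonhyperbolic periodic orbit for some $Y_{r_2,t_0}\in{\cal U}\subset{\cal X}^*(M^d)$, a contradiction. This continuity/nonhyperbolicity mechanism is the key idea your proposal is missing: it converts ``different indices at the two homoclinic loops'' into a contradiction with the star property without ever asking a single periodic orbit to certify both indices.

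A secondary issue: you invoke the $\lambda$-lemma ``at the transverse heteroclinic intersections,'' but transversality need not hold, notably in the hard case where $c$ is a singularity with ${\rm Ind}(c)={\rm Ind}(\sigma)$. The paper's Sublemma~\ref{Lem:Heteroclinic-Cycle} treats this case separately, restricting to the Baire subspace ${\cal D}_\Gamma$ of vector fields frozen on the first connection and perturbing to a continuity point of $\overline{W^u(\sigma)}$ before invoking Wen--Xia. Your proposal does not address this degenerate situation.
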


\begin{proof}
Here we take a $C^1$ generic $X\in{\cal X}^*(M^d)$ satisfying item 2 of Lemma \ref{Lem:Generic-Property}, i.e., if $p$ and $q$ are two different critical elements of $X$ with $C(p)=C(q)$, then there exists a $C^1$ neighborhood $\cal U$ of $X$ such that for any $Y\in{\cal U}$, one has $C(p_Y,Y)=C(q_Y,Y)$. Assume that there exists a critical element $c$ contained in $C(\sigma)$ such that
$$
{\rm Ind}_p(c)\neq{\rm Ind}_p(\sigma).
$$
Fix a $C^1$ neighborhood ${\cal U}\subset{\cal X}^*(M^d)$ as above and
all our perturbations will be contained in
$\cal U$. We will show that some perturbation $Z\in{\cal U}$ has a
periodic orbit with zero Lyapunov exponent, which is a
contradiction. First, we need the following sublemma.

\begin{Sublemma}\label{Lem:Heteroclinic-Cycle}
There exists $Y\in\cal U$ arbitrarily $C^1$ close to $X$ such that there is a heteroclinic cycle associated to $\sigma_Y$ and $c_Y$, i.e., there exist two regular points $x$ and $y$ such that
\begin{itemize}
\item ${\rm Orb}(x,Y)\subseteq W^s(\sigma_Y)\cap W^u(c_Y).$
\item ${\rm Orb}(y,Y)\subseteq W^u(\sigma_Y)\cap W^s(c_Y).$
\end{itemize}
\end{Sublemma}

\begin{proof}
If $c$ is not a singularity with index $s={\rm Ind}(\sigma)$, then
either
$$
{\rm dim}W^s(\sigma)+{\rm dim}W^u(c)\geq d+1,
$$
or
$$
{\rm dim}W^u(\sigma)+{\rm dim}W^s(c)\geq d+1.
$$
Without loss of generality we assume that the first case holds. Then
we can choose $x_s\in W^s(\sigma)\cap C(\sigma)$ and $x_u\in
W^u(c)\cap C(\sigma)$ and apply the connecting lemma for
chains (Lemma \ref{Lem:Pseudo-Connecting}) to create a
heteroclinic orbit
$$x\in W^s(\sigma_{X_1})\cap W^u(c_{X_1})$$
for some $X_1\in{\cal U}$. Moreover, since ${\rm
dim}W^s(\sigma)+{\rm dim}W^u(c)\geq d+1$, one can assume this
intersection is transverse after an arbitrary small $C^1$
perturbation when necessary. Since we still have
$C(\sigma_{X_1},X_1)=C(c_{X_1},X_1)$ which is nontrivial, we could
choose
$$
y_u\in W^u(\sigma_{X_1})\cap C(\sigma_{X_1})\qquad   {\rm and}\qquad    y_s\in W^s(c_{X_1})\cap C(\sigma_{X_1}).
$$
Moreover, we may assume that $X_1$ satisfies item 4 of Lemma
\ref{Lem:Generic-Property} so that you can apply the connecting lemma of
Wen-Xia (Lemma \ref{Lem:WX-Connecting}) to get some
$$y\in W^u(\sigma_{Y})\cap W^s(c_{Y})$$
for some $Y\in\cal U$ and $Y=X$ on $M\setminus {\rm Orb}(x)$ (see
the proof Theorem C in \cite{GaW03} for details). This finishes the
proof of the claim in the case that $c$ is not a singularity with
index $s={\rm Ind}(\sigma)$.

Now we assume that $c$ is a singularity with the same index of
$\sigma$. The difficulty here is that we could not achieve a
transverse heteroclinic orbit which will allow us to ``connect
twice". So we will need more assumptions on the vector field after
the first connecting.

First, we choose $x_s\in W^s(\sigma)\cap C(\sigma)$ and $x_u\in W^u(c)\cap C(\sigma)$ and applying the connecting lemma for chains to create a heteroclinic orbit
$$
\Gamma={\rm Orb}(x)\subseteq W^s(\sigma_{X_1})\cap W^u(c_{X_1}).
$$

Then we consider $\overline{W^u(\sigma_{X_1},X_1)}$, the closure of the unstable manifold of $\sigma_{X_1}$, which is lower semi-continuous with respect to $X_1$. Denote
$$
{\cal D}_{\Gamma}=\{S\in{\cal U}: S|_{\{\sigma_{X_1}\}\cup\Gamma\cup\{c_{X_1}\}}=X_1|_{\{\sigma_{X_1}\}\cup\Gamma\cup\{c_{X_1}\}}\}
$$
the set of all vector fields that coincide with $X_1$ on ${\{\sigma_{X_1}\}\cup\Gamma\cup\{c_{X_1}\}}$. Then ${\cal D}_{\Gamma}$ is a closed subset of ${\cal X}^1(M^d)$, which is also a $Baire$ set. This fact allows us to choose $X_2\in{\cal D}_{\Gamma}$ arbitrarily $C^1$ close to $X_1$, which is a continuous point of $\overline{W^u(\sigma_{X_2},X_2)}$ in ${\cal D}_{\Gamma}$.

\begin{Claim}
$$
c_{X_2}\in\overline{W^u(\sigma_{X_2},X_2)}.
$$
\end{Claim}

\begin{proof}[Proof of Claim:]
Otherwise, there exists an open neighborhood $V$ of $\overline{W^u(\sigma_{X_2},X_2)}\cap C(\sigma_{X_2})$, such that $c_{X_2}\in M^d\setminus\overline{V}$. We choose some $y\in W^u_{loc}(\sigma_{X_2},X_2)\cap C(\sigma_{X_2})\cap V$. Then $c_{X_2}$ is chain attainable from $y$, i.e., there exists a sequence of chain $\{y^n_i, t^n_i\}^{l_n}_{i=1},\ \forall t^n_i>T,\ n=1,2,3\cdots$ (for some $T>0$) which satisfy
$$
d(\phi^{X_2}_{t^n_i}(y^n_i),y^n_{i+1})<\frac{1}{n}, \qquad y^n_1=y \qquad  {\textrm and}\qquad d(\phi^{X_2}_{t^n_{l_n}}(y^n_{l_n}),c_{X_2})<\frac{1}{n},
$$
for all $1\leq i\leq l_{n-1}$ and $n>0$. Denote by $w_n$ the point
at which, for the first time, the chain $\{y^n_i,
t^n_i\}^{l_n}_{i=1}$ does not belong to $V$. Then $\{w_n\}$ will
converge to some point $w\in\partial V\cap C(\sigma_{X_2})$, which
does not belong to $\overline{W^u(\sigma_{X_2},X_2)}$. Moreover, we
assert that $w$ does not belong to $\Gamma$, otherwise the
chain before $w_n$ will accumulate to $c_{X_2}$ first, which
contradicts the fact that $w_n$ is the first point that escapes from
$V$. For the same reason, the Hausdorff limit of these chains
from $y$ to $w_n$ is far from $\Gamma$. We will use the connecting
lemma for chains here. One has
\begin{itemize}
\item There exists chains with arbitrarily small jumps from $y$ to $w$.
\item All these chains and their Hausdorff limit do not intersects $\overline{\Gamma}$.
\end{itemize}
By Lemma \ref{Lem:Pseudo-Connecting}, there is $X_3$ which is arbitrarily $C^1$ close to $X_2$, such that
\begin{itemize}
\item $w\in W^u(\sigma_{X_3},X_3)$.
\item The perturbation region does not intersect $\overline{\Gamma}$, which implies $X_3\in {\cal D}_{\Gamma}$.
\end{itemize}

This fact shows that we could enlarge
$\overline{W^u(\sigma_{X_2},X_2)}$ to $w$ by an arbitrarily small
$C^1$ perturbation in ${\cal D}_{\Gamma}$, which contradicts that
$X_2$ is a continuous point of $\overline{W^u(\sigma_{X_2},X_2)}$ in
${\cal D}_{\Gamma}$. This finishes the proof of the claim.
\end{proof}

Thus $c_{X_2}\in\overline{W^u(\sigma_{X_2},X_2)}$, which implies
that $c_{X_2}$ could be accumulated by some regular orbits contained
in $W^u(\sigma_{X_2},X_2)$. So there exists some point $z$ such that
$$
z\in\overline{W^u(\sigma_{X_2},X_2)}\cap W^s_{loc}(c_{X_2},x_2).
$$

One assumes that every $\varepsilon$-perturbation of $X_2$ is still in ${\cal U}$ for some $\varepsilon>0$. With the help of ${\rm C}^1$-connecting lemma of Wen-Xia (Lemma \ref{Lem:WX-Connecting}), for $\varepsilon>0$, there are $L>0$ and two neighborhoods $\widetilde{W_z}\subset W_z$ of $z$ such that if one denotes $W_{L,z}=\cup_{0\leq t\leq L}\phi^{X_2}_t(W_z)$, one has
\begin{itemize}
\item $W_{L,z}$ is disjoint from $\overline{\Gamma}$.
\item The positive orbit of some $y\in W^u_{loc}(\sigma_{X_2},X_2)$ intersects $\widetilde{W_z}$.
\end{itemize}
By Lemma \ref{Lem:WX-Connecting}, there is $Y$ $\varepsilon$-close to $X_2$ such that
\begin{itemize}
\item $Y$ has a heteroclinic orbit: ${\rm Orb}(y,Y)\subseteq W^u(\sigma_Y)\cap W^s(c_Y)$.
\item $\Gamma={\rm Orb}(x,Y)\subseteq W^s(\sigma_Y)\cap W^u(c_Y)$ is still a heteroclinic orbit.
\end{itemize}
This finishes the proof of the sublemma.
\end{proof}

Now we continue to prove Lemma \ref{Lem:Homogeneous}. For
simplicity, we will assume that $Y$ is $C^1$ linearizable around
$\sigma_Y$ and $c_Y$, and exhibits the heteroclinic cycle
$$
\Gamma_{0,0}=\Gamma_Y=\{\sigma_Y\}\cup\{c_Y\}\cup{\rm Orb}(x,Y)\cup{\rm Orb}(y,Y),
$$
where ${\rm Orb}(x,Y)\subseteq W^s(\sigma_Y)\cap W^u(c_Y)$ and ${\rm Orb}(y,Y)\subseteq W^u(\sigma_Y)\cap W^s(c_Y)$.

In two disjoint linearizable neighborhoods of $\sigma_Y$ and $c_Y$, choose two pairs of points $\{x_s, y_u\}$ and  $\{x_u, y_s\}$ such that
\begin{itemize}
\item $x_s\in W^s_{loc}(\sigma_Y)\cap{\rm Orb}(x)$ and $y_u\in W^u_{loc}(\sigma_Y)\cap{\rm Orb}(y)$,
\item $x_u\in W^u_{loc}(c_Y)\cap{\rm Orb}(x)$ and $y_s\in W^s_{loc}(c_Y)\cap{\rm Orb}(y)$.
\end{itemize}
Then we can choose two pairs of continuous
segments $\{x_{s,r}, y_{u,r}\}$, $0\le r\le 1$ and  $\{x_{u,t}, y_{s,t}\}$, $0\le t\le 1$ such that
\begin{itemize}
\item $\phi^Y_{t_r}(x_{s,r})=y_{u,r}$, $x_{s,0}=x_s$ and $y_{u,0}=y_u$;
\item $\phi^Y_{\tau_t}(y_{s,t})=x_{u,t}$, $x_{u,0}=x_u$ and $y_{s,0}=y_s$.
\end{itemize}

Connecting $x_s$ to $x_{s,r}$ and $y_u$ to $y_{u,r}$; $x_u$ to $x_{u,t}$ and
$y_s$ to $y_{s,t}$ continuously, we get a continuous family of star vector fields $\{Y_{r,t}:\ 0\leq r,t\le 1\}\subset{\cal U}\subset{\cal X}^*(M^d)$ with two parameters $r$ and $t$ such that
\begin{itemize}
\item $\lim_{r,t\to 0}Y_{r,t}=Y$.
\item $Y_{0,t}$ exhibits a homoclinic orbit associated to $\sigma_Y$, denoted by $\Gamma_{0,t}$ for $0\leq t\le 1$.
\item $Y_{r,0}$ exhibits a homoclinic orbit associated to $c_Y$, denoted by $\Gamma_{r,0}$ for $0\leq r\le 1$.
\item $Y_{r,t}$ exhibits a periodic orbit $\Gamma_{r,t}$ satisfying
$$
\lim_{r\to 0}\Gamma_{r,t}=\Gamma_{0,t} \qquad {\rm and} \qquad \lim_{t\to 0}\Gamma_{r,t}=\Gamma_{r,0}.
$$
\end{itemize}
We fix some $r_0>0$ and let $t\to 0$, for $t=t_0$ small enough, Lemma \ref{Lem:Homoclinic-Loop} insures that
$${\rm Ind}(\Gamma_{r_0,t_0})={\rm Ind}_p(c_Y).$$
Then letting $\Gamma_{r,t_0}\to \Gamma_{0,t_0}$ as $r\to 0$, and
applying Lemma \ref{Lem:Homoclinic-Loop} again, we know there is some
$r_1<r_0$ such that
$$
{\rm Ind}(\Gamma_{r_1,t_0})={\rm Ind}_p(\sigma_Y)\neq{\rm Ind}(\Gamma_{r_0,t_0}).
$$
Since the family of
vector fields $\{Y_{r,t_0}: r_1\le r\le r_0\}$ is continuous on the parameters $r$ in the $C^1$
topology, the Lyapunov exponents of $\Gamma_{r,t_0}$ is also
continuous on $r$. This implies that there must be some $r_2$ with
$r_1<r_2<r_0$, such that $\Gamma_{r_2,t_0}$ is a nonhyperbolic
periodic orbit, contradicting $Y_{r_2,t_0}\in{\cal U}\subset{\cal
X}^*(M^d)$. This finishes the proof of the lemma.
\end{proof}

\begin{Lemma}\label{Lem:Strong-Connection}
Let $X\in{\cal X}^*(M^d)$ and  $\sigma$ be a singularity of $X$ such
that $C(\sigma)$ is nontrivial. Then for every singularity $\rho$ in
$C(\sigma)$, we have
\begin{itemize}
    \item if $\sv(\rho)>0$, then $W^{ss}(\rho)\cap C(\sigma)=\{\rho\}$.
    \item if $\sv(\rho)<0$, then $W^{uu}(\rho)\cap C(\sigma)=\{\rho\}$.
\end{itemize}
\end{Lemma}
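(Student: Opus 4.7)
Plan. We handle the first bullet; the second follows by applying the same argument to $-X$. Set $s=\ind(\rho)$; the hypothesis $\sv(\rho)>0$ together with the Lorenz-like structure provided by Theorem~\ref{Thm:Anal-sing} gives the splitting $T_\rho M=E^{ss}\oplus E^c\oplus E^u$ with $\dim E^{ss}=s-1$, $\dim E^c=1$, $E^c\oplus E^u$ sectionally expanding, and $\ind_p(\rho)=s-1$. Suppose for contradiction $x\in W^{ss}(\rho)\cap C(\sigma)$ with $x\neq\rho$. Using the non-triviality of $C(\sigma)=C(\rho)$ choose $z\in W^u(\rho)\cap C(\sigma)\setminus\{\rho\}$.

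\emph{Step 1 (A homoclinic loop tangent to $E^{ss}$).} Fix an arbitrary $C^1$ neighborhood $\cU\subset\cX^*(M^d)$ of $X$ and a small neighborhood $V$ of $\rho$. By the strengthened form of the connecting lemma for chains (Lemma~\ref{Lem:Pseudo-Connecting} and its Remark), we obtain $Y\in\cU$ with $Y\equiv X$ on $V$ such that the forward $Y$-orbit from $z$ reaches $x$. Because $W^{ss}_{\rm loc}(\rho,Y)=W^{ss}_{\rm loc}(\rho,X)$, the orbit $\Gamma=\operatorname{Orb}(z,Y)\cup\{\rho\}$ is a homoclinic loop of $\rho$ whose stable branch enters tangent to $E^{ss}(\rho)$.

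\emph{Step 2 (Approximating periodic orbits).} Applying the Wen--Xia connecting lemma (Lemma~\ref{Lem:WX-Connecting}) at a regular point of $\Gamma$ yields star vector fields $Y_n\to Y$ in $\cU$ and periodic orbits $Q_n$ of $Y_n$ with $Q_n\to\Gamma\cup\{\rho\}$ in the Hausdorff topology. By Lemma~\ref{Lem:Homoclinic-Loop}, $\ind(Q_n)=\ind_p(\rho)=s-1$ for all large $n$.

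\emph{Step 3 (Contradiction via the center-unstable bundle).} On each $Q_n$ the $(d-s+1)$-dimensional $\Phi^{Y_n}$-invariant bundle $E^{cu}(Q_n)=N^u(Q_n)\oplus\langle Y_n\rangle$ satisfies the uniform sectional expansion
$$
\prod_{i=0}^{\lfloor\pi(Q_n)\rfloor-1}m\bigl(\wedge^2\Phi^{Y_n}_1|_{E^{cu}(\phi^{Y_n}_i(q))}\bigr)\ge e^{\eta\pi(Q_n)}
$$
from Corollary~\ref{Cor:2-Form-Basic-Property}. A Pliss-type extraction at points $q_n\in Q_n$ on the entry segment (where $Q_n$ is closest to $\rho$ and tangent to $W^{ss}$), as in the proof of Lemma~\ref{Lem:Index-Estimation}, produces in the limit $n\to\infty$ a $\Phi^X$-invariant subspace $F\subset T_\rho M$ of dimension $d-s+1$ that is sectionally expanding and contains the limit line $L=\lim\langle Y_n(q_n)\rangle\subset E^{ss}(\rho)$. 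Since $F$ is $\Phi^X$-invariant we may decompose it as $F=L\oplus F'$ with $F'\subset E^c\oplus E^u$ of dimension $d-s$. If $F'\supset E^c$, the 2-plane $L\oplus E^c$ has Lyapunov exponent sum $\lambda_L+\lambda_s\le\lambda_{s-1}+\lambda_s<0$, directly contradicting sectional expansion. Otherwise $F'=E^u$, and the 2-plane spanned by $L$ and the weakest expanding eigendirection (rate $\lambda_{s+1}$) has exponent sum $\lambda_L+\lambda_{s+1}$; combining the gap $\lambda_s-\lambda_{s-1}\ge 2\eta$ from Lemma~\ref{Lem:Expon-Sing} with the freedom in the connecting-lemma perturbations to steer $L$ toward the most-contracting eigendirection of $E^{ss}$, one arranges this sum to be non-positive, again contradicting the sectional expansion of $F$.

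The main obstacle is the second case of Step 3. While the first case ($F'\supset E^c$) gives a clean contradiction from $\sv(\rho)>0$ alone, the case $F'=E^u$ requires leveraging the quantitative gap $2\eta$ in the dominated splitting of the extended linear Poincar\'e flow, coupled with a careful choice of the perturbations producing $\Gamma$ and $Q_n$, so that the limiting approach line $L$ lies along a sufficiently contracting sub-eigendirection of $E^{ss}$. Orchestrating the connecting-lemma perturbations to guarantee this alignment --- or, alternatively, iterating the construction to force $F$ to contain $E^c$ --- is the delicate heart of the proof.
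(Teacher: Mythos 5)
The approach diverges from the paper's and, as you yourself flag at the end, Step~3 has a genuine gap that I do not think your strategy can close.

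\textbf{Where it breaks.} You want to extract a sectionally expanding, $\Phi^X$-invariant subspace $F\subset T_\rho M$ of dimension $d-s+1$ that contains a line $L=\lim\langle Y_n(q_n)\rangle\subset E^{ss}(\rho)$, with $q_n$ on the entry segment of $Q_n$. But the Pliss points used in the proof of Lemma~\ref{Lem:Index-Estimation} to manufacture $F$ are not at your disposal: to land at $\rho$ via a \emph{backward} Pliss iteration (the $-X$ side of that lemma), the base point of the extraction must approach $\rho$ along $W^u(\rho)$, in which case the limiting flow direction inherited by $F$ lies in $E^u(\rho)$, not $E^{ss}(\rho)$. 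Choosing $q_n$ on the incoming ($W^{ss}$) side sends the backward orbit \emph{away} from $\rho$, so the Pliss extraction for expansion cannot be performed there. The hypothesis $L\subset F\cap E^{ss}(\rho)$ that your Step~3 rests on is therefore unsubstantiated. And even granting it, the alternative $F=L\oplus E^u(\rho)$ is genuinely possible and genuinely sectionally expanding whenever $\lambda_L+\lambda_{s+1}>0$, which can hold even when $L$ is the most contracting eigendirection: the gap $\lambda_s-\lambda_{s-1}\ge 2\eta$ from Lemma~\ref{Lem:Expon-Sing} bounds $\lambda_{s-1}$ away from $\lambda_s$ but gives no upper bound on $\lambda_{s-1}+\lambda_{s+1}$ (take $\lambda_{s-1}=-10$, $\lambda_s=-1$, $\lambda_{s+1}=20$). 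So the ``steering'' you propose cannot in general produce a contradiction, and I see no way to iterate the construction to force $E^c\subset F$ either.

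\textbf{What the paper does instead.} The paper's proof (which follows Lemma~4.3 of \cite{LGW05}) avoids the Lyapunov-exponent bookkeeping entirely by using the dominated splitting of the \emph{extended linear Poincar\'e flow} on $\cN$ over $B^{s-1}(C(\sigma_Y))$. One perturbs to a strong homoclinic connection $\Gamma\subset W^{ss}(\rho_Y)\cap W^u(\rho_Y)$, linearizes near $\rho_Y$, and arranges periodic orbits whose flow direction limits to a \emph{mixed} line $L=\langle v^{ss}+v^u\rangle$ with both components nonzero (precisely in $(E^{ss}\oplus E^u)\setminus(E^{ss}\cup E^u)$). Setting $w=v^{ss}-v^u\in\cN_L$ and pushing $(L,w)$ by $\psi^Y_t$, the asymptotics of $L_t$ and $\langle w_t\rangle$ swap roles between $E^{ss}$ and $E^u$ as $t\to\pm\infty$; this is incompatible with $(L,w)$ lying in either factor of the dominated splitting $E\oplus F$ over $B^{s-1}(C(\sigma_Y))\cap T_\rho M$, which gives the contradiction. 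That argument only needs the existence of the domination, with no restriction on the sizes of the Lyapunov exponents, and it works at the mixed direction $L$ without any Pliss extraction landing at $\rho$. You should replace Step~3 with this mechanism.
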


\begin{proof}
The proof of this lemma is the same with Lemma 4.3 of
\cite{LGW05}, and we just sketch it here. Assume $\sv(\rho)>0$ (if
$\sv(\rho)<0$ we consider $-X$). Then from Lemma
\ref{Lem:Expon-Sing} we know that there exists a dominated splitting
$$
T_{\rho}M=E^{ss}(\rho)\oplus E^c(\rho)\oplus E^u(\rho),
$$
which can be assumed to be mutually orthogonal.
Suppose on the contrary that $W^{ss}(\rho)\cap C(\sigma)\neq\{\rho\}$. Then applying the connecting lemma of chains, there exists some star vector field $Y$ arbitrarily $C^1$ close to $X$ exhibiting a strong homoclinic connection
$$
\Gamma\subset W^{ss}(\rho_Y,Y)\cap W^u(\rho_Y,Y).
$$
Moreover,we can assume $Y$ is linearizable around $\rho_Y$. Then using the perturbation around the singularities to generate periodic orbits accumulating the homoclinic loop, we get a sequence of vector fields $\{Y_n\}$ and $p_n\in{\rm Per}(Y_n)$ satisfying $p_n\to\rho$ and
$$
\langle Y_n(p_n)\rangle\hookrightarrow E^{ss}(\rho_Y)\oplus E^u(\rho_Y)\setminus (E^{ss}(\rho_Y)\cup E^u(\rho_Y)).
$$
Since we have ${\rm Ind}(p_n)={\rm Ind}_p(\rho_Y)={\rm
Ind}(\rho_Y)-1=s-1$, we can choose some nonzero $v$ such that
$L=\langle v\rangle\in B^{s-1}(C(\sigma_Y))$ and $v\in
E^{ss}(\rho_Y)\oplus E^u(\rho_Y)$. Let $v=v^{ss}+v^u$, where
$v^{ss}\in E^{ss}(\rho_Y)$ and $v^{u}\in E^{u}(\rho_Y)$. Without
loss of generality, we can assume that $|v^{ss}|=|v^u|$. Let
$w=v^{ss}-v^u$, then $v\perp w$. So, $(L,w)\in \cN_L$. Denote $(L_t,
w_t)=\psi^Y_t(L, w)$. Since $E^{ss}(\rho_Y)$ is contracting and
$E^u(\rho_Y)$ is expanding, we have
\begin{itemize}
\item $L_t\hookrightarrow E^u(\rho_Y)$ and $\langle w_t\rangle\hookrightarrow E^{ss}(\rho_Y)$, as $t\to +\infty$.
\item $L_t\hookrightarrow E^{ss}(\rho_Y)$ and $\langle w_t\rangle\hookrightarrow E^u(\rho_Y)$, as $t\to -\infty$.
\end{itemize}

There exists a dominated splitting $\cN_{B^{s-1}(C(\sigma_Y))\cap T_{\rho}M}=E\oplus F$ with index $s-1$, since $L$ is the limit of flow directions of periodic orbits. Now we consider two cases:

Case 1: $(L,w)\in E_L$.
 In this case, consider $t\to -\infty$. There exists $t_n\to -\infty$ such that $L_{t_n}\to L'\in E^{ss}(\rho_Y)$. According to the continuity of $E_L$, we know that $(L_{t_n},w_{t_n})\in E_{L_{t_n}}\to E_{L'}$. However we know that $\langle w_{t_n}\rangle\hookrightarrow E^u(\rho_Y)=F_{L'}$. This is a contradiction.

Case 2: $(L,w)\notin E_L$.
 In this case, consider $t\to +\infty$. There exists $t_n\to +\infty$ such that $L_{t_n}\to L'\in E^{u}(\rho_Y)$. Since $E\prec F$, we have $(L_{t_n},w_{t_n})\hookrightarrow F_{L'}$. However we know that $\langle w_{t_n}\rangle\hookrightarrow E^{ss}(\rho_Y)=E_{L'}$. This is also a contradiction.

This finishes the proof of Lemma \ref{Lem:Strong-Connection}.
\end{proof}

We end this section by summarizing these results to deduce Theorem
\ref{Thm:Anal-sing}.

\vskip 3mm \noindent{\bf Proof of Theorem \ref{Thm:Anal-sing}.~} We
take the dense $G_\delta$ subset ${\mathcal G_1}$ satisfying Lemma
\ref{Lem:Homogeneous}. Then Theorem \ref{Thm:Anal-sing} follows from
Corollary \ref{Cor:Saddle-Value}, Lemma \ref{Lem:Homogeneous} and
\ref{Lem:Strong-Connection} directly. This ends the proof of Theorem
\ref{Thm:Anal-sing}.\hfill $\Box$

\section{Singular hyperbolicity of singular chain recurrent classes}

In this section, we will give a proof of Theorem
\ref{Thm:Sing-Hyper}, which states that if all the singularities
contained in a singular chain recurrent class have the same index,
then this chain recurrent class must be
singular hyperbolic. During the proof, we will obtain a nice
description for the ergodic measures of star flows (Theorem
\ref{Thm:Erg-Measure}). The main techniques we will use are
Liao's Shadowing Lemma (Theorem \ref{Thm:Liao-Shadowing}) and his
estimation of the size of invariant manifolds (Theorem
\ref{Thm:Inv-Mfd-Size}).

First, we define quasi-hyperbolic arcs for the scaled linear
Poincar\'e flow (see Section 2 for definition).

\begin{Definition}\label{Def:Quasi-Hyp-arc}
Given $X\in{\cal X}^1(M^d)$ and $x\not\in{\rm Sing}(X)$, the orbit arc $\phi_{[0,T]}(x)$ is called $(\eta,T_0)^*$ quasi hyperbolic with respect to a direct sum splitting $\cN_x=E(x)\oplus F(x)$ and the scaled linear Poincar\'e flow $\psi^*_t$ if there exists $\eta>0$ and a partition
$$
0=t_0<t_1<\cdots<t_l=T, \textrm{ where  } t_{i+1}-t_i\in [T_0,2T_0]
$$
such that for $k=0,1,\cdots,l-1$, we have
$$\prod_{i=0}^{k-1}\parallel\psi^*_{t_{i+1}-t_i}|_{\psi_{t_i}(E(x))}\parallel\leq {\rm e}^{-\eta t_k},$$
$$\prod_{i=k}^{l-1}m(\psi^*_{t_{i+1}-t_i}|_{\psi_{t_i}(F(x))})\geq {\rm e}^{\eta(t_l-t_k)},$$
$$\frac{\parallel\psi^*_{t_{k+1}-t_k}|_{\psi_{t_k(E(x))}}\parallel}{m(\psi^*_{t_{k+1}-t_k}|_{\psi_{t_k}(F(x))})}\leq {\rm e}^{-\eta (t_{k+1}-t_k)}.$$
\end{Definition}

\begin{Remark}
This definition is similar to the usual quasi hyperbolic orbit arc
for linear Poincar\'e flow. The only difference is that we consider
the scaled linear Poincar\'e flow instead of the usual linear
Poincar\'e flow.
\end{Remark}

The proof of the next theorem could be found in \cite{Lia81} (see
\cite{GaY13} for more explanations).

\begin{Theorem}\label{Thm:Liao-Shadowing}(\cite{Lia81})
Given $X\in{\cal X}^1(M^d)$, a compact set $\Lambda\subset M^d\setminus{\rm Sing}(X)$, and $\eta>0, T_0>0$, for any $\varepsilon>0$ there exists $\delta>0$, such that for any $(\eta,T_0)^*$ quasi hyperbolic orbit arc $\phi_{[0,T]}(x)$ with respect to some direct sum splitting $\cN_x=E(x)\oplus F(x)$ and the scaled linear Poincar\'e flow $\psi^*_t$ which satisfies $x,\phi_T(x)\in\Lambda$ and $d(E(x),\psi_T(E(x)))\leq\delta$, there exists a point $p\in M^d$ and a $C^1$ strictly increasing function $\theta:[0,T]\to\RR$ such that
\begin{itemize}
\item $\theta(0)=0$ and $1-\varepsilon<\theta'(t)<1+\varepsilon$;
\item $p$ is a periodic point with $\phi_{\theta(T)}(p)=p$;
\item $d(\phi_t(x),\phi_{\theta(t)}(p))\leq\varepsilon|X(\phi_t(x))|,\  t\in[0,T].$
\end{itemize}
\end{Theorem}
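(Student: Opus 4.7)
My plan is to reduce the problem to finding a fixed point of a composed Poincar\'e return map between transverse sections of varying scale, and to apply a graph transform in the scaled coordinates. At each partition time $t_i$, I would pick a codimension-one disk $\Sigma_i\subset M^d$ through $\phi_{t_i}(x)$, orthogonal to the flow, whose radius is proportional to $\|X(\phi_{t_i}(x))\|$, and identify $\Sigma_i$ with a neighborhood of the origin in $\cN_{\phi_{t_i}(x)}$ via a scaled exponential chart. The Poincar\'e map $P_i:\Sigma_i\to\Sigma_{i+1}$ is $C^1$, and in these rescaled coordinates its derivative at the base point is precisely the scaled linear Poincar\'e map $\psi^*_{t_{i+1}-t_i}$. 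The point of the scaling is that on the sections themselves, the $\psi^*_t$-estimates built into the $(\eta,T_0)^*$ quasi-hyperbolicity hypothesis become uniform in $i$, independent of how small $\|X(\phi_{t_i}(x))\|$ may be.

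Next I would install an $E$-cone and an $F$-cone on each $\Sigma_i$ using the transported splitting $\psi_{t_i}(E(x))\oplus\psi_{t_i}(F(x))$. The three inequalities in the definition of $(\eta,T_0)^*$ quasi-hyperbolicity translate, once the scaling is absorbed, into invariance of the cones under $P_i$ and $P_i^{-1}$, uniform contraction inside the $E$-cone, and uniform expansion inside the $F$-cone. A standard Hadamard--Perron graph-transform argument in the scaled sections then produces uniformly sized local stable and unstable graphs $W^s_i, W^u_i$ through the base point of each $\Sigma_i$. Because $x,\phi_T(x)\in\Lambda$ and $\Lambda$ is a compact set of regular points where $\|X\|$ is bounded away from zero, the hypothesis $d(E(x),\psi_T(E(x)))\le\delta$ lets me identify $\Sigma_l$ with $\Sigma_0$ (for $\delta$ sufficiently small, depending on $\varepsilon$) with the subspaces $E$ and $F$ almost aligned. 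The composed return map $P:=P_{l-1}\circ\cdots\circ P_0:\Sigma_0\to\Sigma_0$ is then hyperbolic and, by standard arguments, has a unique fixed point $q\in W^s_0\cap W^u_0$. This $q$ corresponds to a periodic orbit of $\phi_t$ through a point $p$; and because the ambient distances on $\Sigma_i$ are measured in the scaled chart, they translate back to the desired bound $d(\phi_t(x),\phi_{\theta(t)}(p))\le\varepsilon|X(\phi_t(x))|$. The reparametrization $\theta$ is recovered by interpolating the consecutive crossing times of $p$ through the sections and is $C^1$-close to the identity by transversality, yielding $|\theta'-1|<\varepsilon$.

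The main obstacle, and the reason this result is more delicate than the classical shadowing lemma for the usual linear Poincar\'e flow, is that the natural transverse sections along the arc have wildly varying scale as $\|X(\phi_t(x))\|$ varies. Without the scaling, the ordinary linear Poincar\'e flow picks up the wrong time-growth rates wherever the flow is slow, the cone estimates are not uniform, and the graph transform does not close up. The device that fixes this is precisely the scaling built into $\psi^*_t$; most of the work goes into checking that in the properly scaled coordinates the nonlinear remainder of $P_i$ is controlled by constants depending only on $\eta$, $T_0$ and $\Lambda$, so that a single cone field and a single fixed-point argument suffice along the entire arc.
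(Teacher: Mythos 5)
The paper does not prove this theorem; it is quoted from Liao \cite{Lia81} with the remark that \cite{GaY13} contains further explanation, so there is no in-paper proof to compare against. Judged on its own terms, your outline identifies the right mechanism and the right skeleton: rescaled transverse sections of radius proportional to $\|X\|$, the observation that in the rescaled chart the derivative of the sectional Poincar\'e map at the base point is exactly $\psi^*_{t_{i+1}-t_i}$, cone fields adapted to $E\oplus F$ whose invariance and contraction/expansion properties come from the three $(\eta,T_0)^*$ inequalities, a Hadamard--Perron graph transform, and closing up at the two ends which lie in the compact set $\Lambda$ where $\|X\|$ is bounded below. This is how the modern expositions (e.g.\ Gan--Yang) present the argument; Liao's original route is phrased instead through his ``standard systems'' of linear differential equations along the orbit and a fixed-point argument in a function space of candidate orbits, which is functionally equivalent but organized differently.

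Two places in your sketch carry essentially all the content of the theorem and would need real work. First, the assertion that in rescaled coordinates ``the nonlinear remainder of $P_i$ is controlled by constants depending only on $\eta$, $T_0$ and $\Lambda$'' is precisely the hard part. You must show that the $C^1$ distance between the rescaled Poincar\'e map $P_i$, restricted to a ball of radius $\rho\,\|X(\phi_{t_i}(x))\|$, and its linearization $\psi^*_{t_{i+1}-t_i}$ is uniformly small, and crucially uniformly so as the arc comes arbitrarily close to $\Sing(X)$. The estimate depends on the $C^1$ modulus of continuity of $X$ (not merely on $\eta,T_0,\Lambda$), and one must control the ratio of $\|X\|$ at consecutive partition points using the bound $t_{i+1}-t_i\in[T_0,2T_0]$. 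Without this computation the cone invariance is unproved and the graph transform does not start. Second, your step ``identify $\Sigma_l$ with $\Sigma_0$'' silently uses that $\phi_T(x)$ is close to $x$; the hypothesis as stated only constrains $d(E(x),\psi_T(E(x)))$. In the fuller formulations there is also a smallness hypothesis on $d(x,\phi_T(x))$ (or the arc is assumed pseudo-periodic), and indeed the conclusion $d(x,p)\le\varepsilon|X(x)|$ together with $d(\phi_T(x),p)\le\varepsilon|X(\phi_T(x))|$ forces $x$ and $\phi_T(x)$ to be comparably close, so the hypothesis must be read that way. You should make this explicit, and note that it is exactly the requirement $x,\phi_T(x)\in\Lambda\subset M\setminus\Sing(X)$ that gives the two end-sections comparable scale, which is why the statement breaks down if the endpoints are allowed to approach a singularity.
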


\begin{Remark}
In this theorem, the compactness of $\Lambda$ guarantees the {\em
two ends} of the quasi hyperbolic string to be uniformly far from
the singularities. But we do not require the compact set $\Lambda$
to be invariant. Some part of the quasi hyperbolic string can be
very close to singularities. If the ends of the string are close to
singularity, the conclusion may not hold.
\end{Remark}

The second theorem of Liao we need is the significant estimation for
the size of invariant manifolds. Such kind of theorems are
well-known in the case of diffeomorphism and non-singular flow
(e.g., see \cite{PuS00}). If there is a singularity, however, it
would be very subtle and difficult to estimate the size of invariant
manifolds when the regular orbits approximate the singularity. As
before, we first introduce the definition of
$(\eta,T,E)^*$ contracting orbit arcs.

\begin{Definition}
Let $X\in{\cal X}^1(M^d)$, $\Lambda$  a compact invariant set of $X$, and $E\subset\cN_{\Lambda\setminus{\rm Sing}(X)}$ an invariant bundle of the linear Poincar\'e flow $\psi_t$. For $\eta>0$ and $T>0$, $x\in\Lambda\setminus{\rm Sing}(X)$ is called $(\eta,T,E)^*$ contracting if for any $n\in\NN$,
$$
\prod_{i=0}^{n-1}\parallel\psi^*_T|_{E(\phi_{iT}(x))}\parallel\leq {\rm e}^{-n\eta}.
$$
Similarly, $x\in\Lambda\setminus{\rm Sing}(X)$ is called $(\eta,T,E)^*$ expanding if it is $(\eta,T,E)^*$ contracting for $-X$.
\end{Definition}

\begin{Theorem}\label{Thm:Inv-Mfd-Size}(\cite{Lia89})
Let $X\in{\cal X}^1(M^d)$ and $\Lambda$ a compact invariant set of $X$. Given $\eta>0, T>0$, assume that $\cN_{\Lambda\setminus{\rm Sing}(X)}=E\oplus F$ is an $(\eta,T)$-dominated splitting with respect to the linear Poincar\'e flow. Then, for any $\varepsilon>0$, there is $\delta>0$ such that if $x$ is $(\eta,T,E)^*$ contracting, then there is a $C^1$ map $\kappa:E_x(\delta|X(x)|)\to\cN_x$ such that
\begin{itemize}
\item $d_{C^1}(\kappa,{\rm id})<\varepsilon$.
\item $\kappa(0)=0$.
\item $W^{cs}_{\delta|X(x)|}(x)\subset W^s({\rm Orb}(x))$, where $W^{cs}_{\delta|X(x)|}(x)={\rm exp}_x({\rm Image}(\kappa))$.
\end{itemize}
Here $E_x(r)=\{v\in E_x: |v|\le r\}.$
\end{Theorem}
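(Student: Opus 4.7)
\noindent\textbf{Proof proposal for Theorem \ref{Thm:Inv-Mfd-Size}.}
The plan is to run a graph transform / Hadamard--Perron construction in the tubular neighborhood of the orbit, but carried out with respect to the scaled linear Poincar\'e flow $\psi_t^*$ so that the proportionality with $|X(x)|$ is built in from the start. First, in a normal tubular neighborhood of the forward orbit $\phi_{[0,\infty)}(x)$, introduce coordinates identifying a neighborhood of $\phi_{nT}(x)$ with a ball in $\cN_{\phi_{nT}(x)}$ whose radius is proportional to $|X(\phi_{nT}(x))|$; the Poincar\'e return map between consecutive normal sections (at times $nT$ and $(n+1)T$, with return times approximately $T$) is, up to a higher order correction, the linear map $\psi^*_T$ after renormalization by the flow speed. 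This renormalization is exactly the factor $\|X(x)\|/\|X(\phi_t(x))\|$ appearing in the definition of $\psi^*_t$, and it is what allows the estimates to remain uniform even when $\phi_t(x)$ comes arbitrarily close to $\Sing(X)$.

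Next, use the $(\eta,T)$-dominated splitting $\cN = E\oplus F$ to write the renormalized return map at each section in the form
\[
(u,v)\longmapsto \bigl(A_n u + g_n(u,v),\; B_n v + h_n(u,v)\bigr),
\]
with $\|A_n\|\le e^{-\eta T}$ (from the $(\eta,T,E)^*$ contracting hypothesis applied at $n$) and with $\|A_n\|/m(B_n)\le e^{-\eta T}$ (from domination). The nonlinear terms $g_n,h_n$ vanish to first order at the origin, and the choice of radius $\delta|X(\phi_{nT}(x))|$ on the $n$-th section keeps their Lipschitz constants as small as desired provided $\delta$ is chosen small. A standard graph transform in the space of Lipschitz graphs $\kappa_n:E_{\phi_{nT}(x)}(\delta|X(\phi_{nT}(x))|)\to F_{\phi_{nT}(x)}$ then produces, by the usual contraction mapping argument, a unique invariant family of graphs; restricted to the initial section one obtains the map $\kappa$ of the statement, with $d_{C^1}(\kappa,\mathrm{id})<\varepsilon$ and $\kappa(0)=0$.

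Finally, to verify that $W^{cs}_{\delta|X(x)|}(x):=\exp_x(\mathrm{Image}(\kappa))\subset W^s(\mathrm{Orb}(x))$, observe that any point in this disk is carried by the Poincar\'e return map to the $n$-th section at a point whose $E$-component has length controlled by $\prod_{i=0}^{n-1}\|\psi^*_T|_{E(\phi_{iT}(x))}\|\cdot \delta|X(\phi_{nT}(x))|\le e^{-n\eta}\delta|X(\phi_{nT}(x))|$. Multiplying back by the scaling factor converts this into actual distance in $M$, which shows that the orbit of the point stays in the shrinking normal tube around $\phi_{[0,\infty)}(x)$ and in fact approaches it exponentially fast, hence lies in $W^s(\mathrm{Orb}(x))$.

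The main obstacle is precisely the behavior when $\phi_t(x)$ is very close to a singularity: there $|X(\phi_t(x))|$ is small, the tube radius $\delta|X(\phi_t(x))|$ is correspondingly small, and one must ensure that the nonlinear remainders $g_n, h_n$ (which come from comparing the true Poincar\'e map to its linear part) still have small enough Lipschitz constants on the relevant scale. This is exactly the role of working with the scaled flow $\psi^*_t$ rather than $\psi_t$: the scaling renormalizes both the domain size and the linear part uniformly, so the nonlinear error is controlled by the $C^1$ modulus of the flow on a box of fixed relative size, independent of the distance to $\Sing(X)$. This is the delicate contribution of Liao \cite{Lia89} and is what makes the conclusion valid for orbits that shadow singularities arbitrarily closely.
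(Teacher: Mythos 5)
The paper does not prove Theorem~\ref{Thm:Inv-Mfd-Size}; it is quoted verbatim from Liao~\cite{Lia89}, so there is no in-paper argument for your sketch to be measured against. Taken on its own, your outline captures the correct high-level picture --- a graph-transform construction on normal sections whose radius is proportional to $|X(\phi_{nT}(x))|$, so that the sectional return maps are governed, after renormalization, by the scaled linear Poincar\'e flow $\psi^*_t$ --- and you correctly put your finger on the key difficulty, namely that the nonlinear remainder of the return map must have small Lipschitz constant on boxes of relative size $\delta$ uniformly as the orbit approaches $\Sing(X)$. But the sketch has a genuine gap in the linear part of the estimate: you write ``$\|A_n\|\le e^{-\eta T}$ from the $(\eta,T,E)^*$ contracting hypothesis applied at $n$,'' whereas the hypothesis only gives the cumulative bound $\prod_{i=0}^{n-1}\|\psi^*_T|_{E(\phi_{iT}(x))}\|\le e^{-n\eta}$. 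Individual factors can be as large as one pleases, so a naive per-step graph transform of the form you describe does not contract. To fix this one must either pass to a Lyapunov (adapted) norm along the orbit that makes the contraction pointwise, or run the graph transform over variable-length blocks chosen by a Pliss-type selection so that the accumulated product, not the single step, drives the contraction. This is a standard but nontrivial modification and should be made explicit. Similarly, the assertion that choosing the $n$-th section radius to be $\delta|X(\phi_{nT}(x))|$ automatically controls the Lipschitz constants of $g_n,h_n$ is precisely the technical heart of Liao's theorem; you flag it as ``the delicate contribution of Liao'' but the sketch does not actually supply the Taylor-remainder estimate (comparing the local flow to its linearization on a box whose diameter is commensurate with the distance to the nearest singularity) that justifies it. As a summary of the strategy, the outline is accurate; as a proof, both the Pliss/Lyapunov-norm step and the uniform nonlinear-remainder estimate are missing.
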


\begin{Remark}
Compared with the cases of diffeomorphisms and non-singular flows,
we can see that this theorem is quite reasonable. In those two
cases, if we have a uniform contraction for the derivatives in the
future, we can achieve a uniform size of stable manifolds. But here,
because of the interference of singularities, we could only expect
the size of stable manifolds to be proportional to the flow speed. This
could also be thought as some kind uniform size of invariant
manifolds.
\end{Remark}

For the proof of Theorem \ref{Thm:Sing-Hyper}, we still need the
Ergodic Closing Lemma of Ma{\~n}{\'e}. We call a point $x\in M-{\rm
Sing}(X)$ is strongly closable for $X$, if for any $C^1$
neighborhood $\cal U$ of $X$, and any $\delta>0$, there exists
$Y\in\cal U$, $y\in M$, and $\tau>0$ such that the following
items are satisfied:
\begin{itemize}
\item $\phi^Y_{\tau}(y)=y.$
\item $d(\phi^X_t(x),\phi^Y_t(y))<\delta$, for any $0\leq t\leq\tau$.
\end{itemize}
The set of strongly closable points of $X$ will be denoted by
$\Sigma(X)$. The following flow version of the Ergodic Closing Lemma
can be found in $\cite{Wen96}$.

\begin{Theorem}\label{Thm:Erg-Clo-Lem}(\cite{Wen96})
For any $X\in{\cal X}^1(M)$, $\mu(Sing(X)\cup\Sigma(X))=1$ for every $T>0$ and every $\phi^X_T$-invariant Borel probability measure $\mu$.
\end{Theorem}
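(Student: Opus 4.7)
The plan is to adapt Ma\~n\'e's original argument for the discrete ergodic closing lemma to the flow setting. Fix $T>0$ and a $\phi^X_T$-invariant Borel probability measure $\mu$, and set $R=M\setminus(\mathrm{Sing}(X)\cup\Sigma(X))$. The goal is to show $\mu(R)=0$. As a preliminary reduction, by the Poincar\'e recurrence theorem applied to the time-$T$ map $\phi^X_T$, the set of $\phi^X_T$-recurrent points has full $\mu$-measure, so it suffices to show that every non-singular $\phi^X_T$-recurrent point in the support of $\mu$ lies in $\Sigma(X)$ up to a $\mu$-null set.

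The basic tool is the flow version of Pugh's $C^1$ closing lemma (Pugh--Robinson): for a regular point $x$ whose $\phi^X$-orbit returns to a sufficiently small neighborhood of $x$, one may $C^1$-perturb $X$ inside a prescribed small flow box $B$ to produce $Y\in\mathcal{U}$ together with a $Y$-periodic orbit passing near $x$. The delicate point is shadowing: outside $B$ the perturbation is trivial, so $\phi^Y$ tracks $\phi^X$ accurately only as long as the orbit stays away from $B$. If $\phi^X_{[0,\tau]}(x)$ re-enters $B$ many times before its eventual return to $x$, each such re-entry re-applies the perturbation and destroys the $\delta$-shadowing demanded by the definition of $\Sigma(X)$.

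The main step, and the principal technical obstacle, is therefore to upgrade generic returns to \emph{clean} returns: return times $\tau_k\to\infty$ with $\phi^X_{\tau_k}(x)\to x$ such that the orbit arc $\phi^X_{[0,\tau_k]}(x)$ meets the chosen flow box only near its endpoints. I plan to produce clean returns via a Birkhoff-ergodic-theorem and covering argument. Fix a small radius $\rho$; for any reference point $z$, the Birkhoff averages of the indicator of the $\rho$-flow-box around $z$ converge $\mu$-a.e.\ to a value bounded above by a constant multiple of the box's $\mu$-measure. Covering $M$ by finitely many such boxes and applying a pigeonhole estimate on the number of visits to one's own $\rho$-tube during a long window, one concludes that the set of points which fail to admit returns where the total time spent in the $\rho$-tube is subpolynomial in $\tau_k$ has $\mu$-measure tending to $0$ with $\rho$. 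A diagonal Borel--Cantelli argument along $\rho_n\to 0$ then shows that $\mu$-almost every non-recurrent-closable point has arbitrarily clean returns at arbitrarily small scales.

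Given a clean return at scale $\rho\ll\delta$ with return time $\tau$, the Pugh--Robinson closing lemma applied inside the $\rho$-flow-box around $x$ produces $Y\in\mathcal{U}$ and a $Y$-periodic point $y$ with period $\tau'\approx\tau$ whose orbit $\delta$-shadows $\phi^X_{[0,\tau]}(x)$, because the perturbation is active only in a region the original orbit avoids except near its endpoints. This exhibits $x$ as strongly closable, so $\mu(R)=0$ and the theorem follows.
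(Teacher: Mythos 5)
The paper does not prove this theorem; it is quoted from \cite{Wen96} without proof, so there is no in-paper argument to compare against. Your proposal must therefore be judged on its own, and it contains a genuine gap.

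The critical step is your final claim: that with a ``clean return'' at scale $\rho$, a single application of the Pugh--Robinson closing lemma in a $\rho$-flow-box $B$ around $x$ produces a $Y$-periodic orbit that $\delta$-shadows $\phi^X_{[0,\tau]}(x)$, ``because the perturbation is active only in a region the original orbit avoids except near its endpoints.'' This conflates two different things. Outside $B$ one has $Y=X$, so the periodic orbit of $Y$ is, away from the endpoints, an honest $X$-orbit of a point $y$ with $d(y,x)\lesssim\rho$. But two $X$-orbits starting $\rho$-apart can separate like $\rho\,e^{L t}$, so after time $\tau$ the distance is of order $\rho\,e^{L\tau}$. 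Since $\tau$ is a return time to a $\rho$-box, $\tau$ grows as $\rho\to 0$, and there is no reason for $\rho\,e^{L\tau}$ to stay below $\delta$. ``Clean returns'' only control how often the perturbation is \emph{re-applied}; they give no control whatsoever on the divergence of nearby orbits of the unperturbed flow over long times, which is exactly what $\delta$-shadowing over the whole period requires. This is the genuine difficulty that makes Ma\~n\'e's theorem much stronger than the plain closing lemma, and your argument silently assumes it away.

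The resolution in Ma\~n\'e's proof (and in Wen's flow adaptation) is structurally different: one does \emph{not} perturb only in a single box near $x$ and hope the free flight shadows. Instead, the perturbation used to close the orbit is a version of Pugh's ``lift'' distributed over a tube around the entire orbit segment $\phi^X_{[0,\tau]}(x)$; each small local lift re-centers the perturbed orbit onto the original one, so the shadowing error does not accumulate and stays at the scale of the tube radius, uniformly in $\tau$. The Birkhoff/selection machinery then enters not to produce ``clean returns'' but to show that $\mu$-typical points admit return times for which the orbit segment has the recurrence and separation structure needed to run this distributed perturbation without self-interference, together with the flow-specific issue of staying away from singularities (hence the $\Sing(X)$ term in the statement). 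Incidentally, your Birkhoff estimate is also off: the ergodic theorem gives that the \emph{fraction} of time in the $\rho$-tube converges to $\mu(\text{tube})$, so the total visit time is linear in $\tau_k$ (with small constant), not subpolynomial; the Borel--Cantelli step as written does not go through.
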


Now with the help of these theorems, we can give a  description for
ergodic measures of star flows. The next theorem is a detailed
version of Theorem E.

Given a $C^1$ vector field $X$, an ergodic measure $\mu$ of $X$ is called {\em  hyperbolic} if $\mu$ has at most one zero Lyapunov exponent, whose invariant subspace is spanned by $X$.

\begin{Theorem}\label{Thm:Erg-Measure}
Let $X\in{\cal X}^*(M^d)$. Then any ergodic
measure $\mu$ of $X$ is hyperbolic.
Moreover, if $\mu$ is not the atomic measure on any singularity,
then
$$
\supp(\mu)\cap H(P)\not=\emptyset,
$$
where $P$ is a periodic orbit with the index of $\mu$, i.e., the stable dimension of $P$ and $\mu$ coincide.
\end{Theorem}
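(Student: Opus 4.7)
The argument has two parts: establishing the hyperbolicity of $\mu$, and then locating $\supp(\mu)$ inside a homoclinic class of the correct index. If $\mu=\delta_\sigma$ for some singularity $\sigma$, the star condition forces $\sigma$ to be hyperbolic and the result is immediate, so I treat the non-atomic case. Here ergodicity yields $\mu({\rm Sing}(X))=0$, and Theorem~\ref{Thm:Erg-Clo-Lem} gives $\mu(\Sigma(X))=1$. Fix a $\mu$-generic Birkhoff point $x_0\in\Sigma(X)$. Strong closability provides $Y_n\to X$ in ${\cal X}^*(M^d)$ and periodic orbits $P_n$ of $Y_n$ whose orbits shadow arbitrarily long pieces of ${\rm Orb}(x_0)$; after a subsequence the indices stabilize at some $s\in\{1,\dots,d-1\}$.

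\emph{Hyperbolicity of $\mu$.} By Lemma~\ref{Lem:Basic-Property} the hyperbolic splittings $\cN_{P_n}=N^s_n\oplus N^u_n$ with respect to $\psi^{Y_n}_t$ satisfy the uniform domination estimates with constants $\eta$, $T=1$; taking Hausdorff limits yields a $\psi^X_t$-invariant continuous dominated splitting $\cN_{{\rm Orb}(x_0)}=E\oplus F$ with $\dim E=s$. Applying the Pliss Lemma to the per-period product estimates of Lemma~\ref{Lem:Basic-Property} (selecting Pliss points on $P_n$ whose forward orbits shadow $x_0$) and passing to the limit gives, for $\mu$-a.e.\ $x_0$ and some $\eta'>0$,
$$\prod_{i=0}^{k-1}\|\psi^X_1|_{E(\phi^X_i(x_0))}\|\le e^{-\eta' k},\qquad \prod_{i=0}^{k-1}m\bigl(\psi^X_1|_{F(\phi^X_i(x_0))}\bigr)\ge e^{\eta' k}\quad(k\ge 0).$$
Combined with the vanishing of the flow-direction Lyapunov exponent $\mu$-a.e.\ (valid because $\mu$ charges no singularity), Birkhoff's theorem then shows that every Lyapunov exponent of $\mu$ on $E$ is $\le-\eta'$ and every exponent on $F$ is $\ge\eta'$, so $\mu$ is hyperbolic of index $s$.

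\emph{Homoclinic class.} Transferring the multiplicative estimate to the scaled Poincaré flow $\psi^*_t$ (which agrees with the ordinary $\psi_t$ over full periods, since $\prod\|\Phi_1|_{\langle X\rangle}\|$ over one period equals $1$) and applying Pliss once more, I obtain, for $\mu$-a.e.\ $x_0$, arbitrarily long orbit arcs $\phi^X_{[0,T_k]}(x_0)$ that are $(\eta'',1)^*$-quasi-hyperbolic in the sense of Definition~\ref{Def:Quasi-Hyp-arc} with both endpoints in a compact set $\Lambda\subset M\setminus{\rm Sing}(X)$ of arbitrarily large $\mu$-measure. Liao's Shadowing Lemma (Theorem~\ref{Thm:Liao-Shadowing}) then yields periodic orbits $P_k$ of $X$ of index $s$ accumulating on $\supp(\mu)$; fix $P=P_{k_0}$ for $k_0$ large. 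The invariant manifolds $W^s(P)$ and $W^u(P)$ have size proportional to $\|X\|$ on $P$ by Theorem~\ref{Thm:Inv-Mfd-Size}. Using two distinct long quasi-hyperbolic arcs of ${\rm Orb}(x_0)$ that each return close to $P$, a shadowing argument then forces $W^s(P)$ and $W^u(P)$ to intersect transversally at a point of $\supp(\mu)$, giving $\supp(\mu)\cap H(P)\neq\varnothing$.

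\emph{Main obstacle.} The hardest step is the last: since $\mu$-generic orbits may graze arbitrarily close to singularities, locating the transverse homoclinic intersection of $W^s(P)$ and $W^u(P)$ \emph{inside} $\supp(\mu)$ (rather than merely nearby) requires combining the proportional-size manifold estimate of Theorem~\ref{Thm:Inv-Mfd-Size} with a careful selection of return times of $x_0$ to a region where $\|X\|$ is uniformly bounded below, exploiting the \emph{scaled} Poincaré flow normalization throughout.
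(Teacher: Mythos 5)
Your general roadmap (Ergodic Closing Lemma $\to$ uniform $\psi^*$-estimates via Lemma~\ref{Lem:Basic-Property} $\to$ Liao's Shadowing Lemma $\to$ Liao's manifold-size estimate) is exactly the route the paper takes, and the first half of your argument for the hyperbolicity of $\mu$ is sound in spirit, though organized somewhat circuitously. The paper extracts the integral estimates
$\int\log\|\psi^X_T|_{N^s}\|\,d\mu\le -\eta$ and $\int\log m(\psi^X_T|_{N^u})\,d\mu\ge\eta$
directly by pushing the per-period estimates of Lemma~\ref{Lem:Basic-Property} through the weak-$*$ convergence $\mu_n\to\mu$ and the continuity of the extended linear Poincar\'e flow, and then applies Birkhoff plus Pliss afterward. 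Your ``select Pliss points on $P_n$ whose forward orbits shadow $x_0$'' is imprecise: a Pliss point on $P_n$ passes in the limit only to \emph{some} point of $\supp(\mu)$, not to the chosen Birkhoff-generic point $x_0$; to get the estimate $\mu$-a.e.\ you still need the integral version. Better to keep the integral argument as the primary one and derive your multiplicative estimates at Pliss times of a typical orbit from it.

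The genuine gap is in your final step. You assert that two long quasi-hyperbolic arcs of ${\rm Orb}(x_0)$ force $W^s(P)$ and $W^u(P)$ to ``intersect transversally at a point of $\supp(\mu)$.'' That claim is too strong and, as stated, not what is needed: a transverse homoclinic point of $P$ is typically \emph{not} in $\supp(\mu)$ (its orbit is wandering relative to the recurrence encoded in $\mu$), so you cannot hope to locate the homoclinic intersection inside $\supp(\mu)$. What the paper proves instead is that the periodic orbits $p_n$ produced by Liao's Shadowing Lemma (which are periodic orbits of $X$ itself, not of perturbations) converge to a point $z\in\supp(\mu)$; the Pliss/quasi-hyperbolicity transfers to a point $q_n\in{\rm Orb}(p_n)$ that is $(\eta/2,T,N^s)^*$-contracting and $(\eta/2,T,N^u)^*$-expanding, so Theorem~\ref{Thm:Inv-Mfd-Size} gives local stable and unstable manifolds of $q_n$ of size proportional to $|X(q_n)|$. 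Because the $q_n$ accumulate at $z\notin{\rm Sing}(X)$, these sizes are uniformly bounded below, hence for large $n,m$ the orbits ${\rm Orb}(p_n)$ and ${\rm Orb}(p_m)$ are mutually homoclinically related, so they all lie in a single homoclinic class $H(P)$. Since $H(P)$ is closed and $p_n\to z$, one concludes $z\in H(P)$, hence $\supp(\mu)\cap H(P)\neq\varnothing$. Replacing your last paragraph with this ``mutually homoclinically related $\Rightarrow$ common class $\Rightarrow$ pass to the limit'' argument closes the gap; your ``Main obstacle'' worry is then resolved automatically, because you never need the homoclinic point itself to lie in $\supp(\mu)$, only the accumulation point of the shadowing periodic orbits.
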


\begin{proof}
Since $\mu$ is ergodic for the time-$t$ map $\phi_t$ except at most countable many $t$ (\cite{PS71}), we can choose $T$ large enough so that
\begin{itemize}
\item $\mu$ is ergodic for the time-$T$ map $\phi_T$,
\item  $\exists\eta>0$ such that the constants $T$ and $\eta$ satisfy the conclusion of Lemma \ref{Lem:Basic-Property}.
\end{itemize}

If $\mu$ supports on some critical element, then from the definition of star flows, it should be hyperbolic. So for the rest of the proof, we will assume that $\mu$ does not support on any critical element. We will first use the ergodic closing lemma to show $\mu$ is hyperbolic; then apply the argument of Katok and Liao's shadowing lemma (Theorem $\ref{Thm:Liao-Shadowing}$) to prove the existence of the accumulation of periodic orbits; and finally, the estimation of the size of stable and unstable manifolds (Theorem $\ref{Thm:Inv-Mfd-Size}$) will guarantee these periodic orbits are homoclinic related.

Applying Theorem $\ref{Thm:Erg-Clo-Lem}$, there exists some point
$x\in B(\mu)\cap\supp(\mu)\cap\Sigma(X)$ and $X_n\in{\cal
X}^1(M^d)$, $x_n\in M^d$, $\tau_n>0$ such that
\begin{itemize}
\item $\phi^{X_n}_{\tau_n}(x_n)=x_n$, where $\tau_n$ is the minimal period of $x_n$;
\item $d(\phi^{X}_t(x),\phi^{X_n}_t(x_n))<1/n$, for any $0<t<\tau_n$;
\item $\parallel X_n-X\parallel_{C^1}<1/n$.
\end{itemize}
Here $B(\mu)$ is the set of generic points of $\mu$. Recall that $x$ is a generic point of $\mu$ if for any continuous function $\xi: M^d\to\mathbb{R}$,
$$
\lim_{n\to+\infty}\frac 1n\sum_{i=0}^{n-1}\xi(\phi_{iT}(x))=\int\xi(y){\rm d}\mu(y).
$$
Since $\mu$ does not support on any critical element, we know that $\tau_n\rightarrow\infty$ as $n\rightarrow\infty$, and the ergodic measure $\mu_n$ supported on the periodic orbit of $x_n$ will converge to $\mu$ in the sense of weak topology. From Lemma $\ref{Lem:Basic-Property}$, we know that for any $x\in {\rm Orb}(x_n)$, $m\in\mathbb{N}$,
$$
\prod_{i=0}^{[{m\tau_n}/T]-1}\|\psi^{X_n}_T|_{N^s(\phi^{X_n}_{iT}(x))}\|\le {\rm e}^{-m\eta\tau_n},
$$
$$
\prod_{i=0}^{[{m\tau_n}/T]-1}m(\psi^{X_n}_T|_{N^u(\phi^{X_n}_{iT}(x))})\ge {\rm e}^{m\eta\tau_n}.
$$
These inequalities imply
$$
\int \log\parallel\psi^{X_n}_T\mid_{N^s(x)}\parallel d\mu_n(x)\leq -\eta,
$$
$$
\int \log m(\psi^{X_n}_{T}\mid_{N^u(x)}) d\mu_n(x)\ge \eta.
$$
We may assume that the index of ${\rm Orb}(x_n)$ is the same, then item 1 of Lemma \ref{Lem:Basic-Property} gives a dominated splitting on the limit: $N^s\oplus N^u$.
By considering the extended linear Poincar\'e flow $\psi_T^X(L,v)$, since $\psi$ is continuous in $T, X, L, v$ (see Lemma 3.1 in \cite{LGW05}),  we get that
$$
\int \log\parallel\psi^{X}_T\mid_{N^s(x)}\parallel d\mu(x)\leq -\eta,
$$
$$
\int \log m(\psi^{X}_{T}\mid_{N^u(x)}) d\mu(x)\ge \eta.
$$
This proves that $\mu$ is hyperbolic for $X$.

Since $\mu$ does not support on any critical element,
$$
\int \log\|\Phi_T|_{\langle X(x)\rangle}\| {\rm d}\mu(x)=0.
$$
We get that
$$
\int \log\parallel\psi^{*}_T\mid_{N^s(x)}\parallel d\mu(x)\leq -\eta,
$$
$$
\int \log m(\psi^{*}_{T}\mid_{N^u(x)}) d\mu(x)\ge \eta,
$$
equivalently,
$$
\int \log \|\psi^{*}_{-T}\mid_{N^u(x)}\| d\mu(x)\le -\eta.
$$
By Birkhoff Ergodic Theorem, we know that for $\mu-$almost every $z\in M$, we have
$$
\int \log\parallel\psi^*_T\mid_{N^s(x)}\parallel d\mu(x)=\lim_{k\rightarrow\infty}\frac{1}{k}\sum^{k-1}_{i=0}\log\parallel\psi^*_T\mid_{N^s(\phi^{X}_{iT}(z))}\parallel\leq -\eta,
$$
$$
\int \log \|\psi^*_{-T}\mid_{N^u(x)}\| d\mu(x)=\lim_{k\rightarrow\infty}\frac{1}{k}\sum^{k-1}_{i=0}\log \|\psi^*_{-T}\mid_{N^u(\phi^{X}_{-iT}(z))}\|\le -\eta.
$$

Following Katok's argument \cite{Kat80}, for every $K>0$, let $\Lambda_K$ be the set of points $x\in\supp(\mu)\cap B(\mu)$ such that for each $k>0$ one has
$$
\prod_{i=0}^{k-1}\|\psi^*_T|_{N^s(\phi^X_{iT}(x))}\|\le K{\rm e}^{-k\eta}, \qquad \qquad
\prod_{i=0}^{k-1}\|\psi^*_{-T}|_{N^u(\phi^X_{-iT}(x))}\|\le K{\rm e}^{-k\eta}.
$$
Then $\mu(\Lambda_K)\to 1$ as $K\to \infty$. So, for $K$ large enough, $\mu(\Lambda_K)>0$.
Since $\mu$ could not support on any critical element and is ergodic, we have $\mu(\Sing(X))=0$. So for some $\delta>0$,  $\Delta_K=\Lambda_K\setminus B(\Sing(X),\delta)$ has positive measure, where $B(\Sing(X),\delta)$ is the $\delta$-neighborhood of $\Sing(X)$ in $M$. Note that $\Delta_K$ is a closed set. According to Poincar\'e recurrence theorem, this implies that for every $z\in \supp{\mu|_{\Delta_K}}$, one can find orbit arcs $\phi_{[0, m_nT]}(x_n)$ such that  $x_n, \phi_{m_nT}(x_n)$ belong to $\Delta_K$, the distances $d (x_n, z), d(z, \phi_{m_nT}(x_n))$ are arbitrarily small and the non-invariant atomic measure
$$
\mu_n=\frac 1{m_n} \sum_{i=0}^{m_n-1}\delta_{\phi_{iT}(x_n)}
$$ is arbitrarily close to $\mu$. In particular, for each $0\leq k\leq m_n$ we have
$$
\prod_{i=0}^{k-1}\|\psi^*_T|_{N^s(\phi^X_{iT}(x_n))}\|\le K{\rm e}^{-k\eta}, \qquad and \qquad
\prod_{i=0}^{k-1}\|\psi^*_{-T}|_{N^u(\phi^X_{(n-i)T}(x_n))}\|\le K{\rm e}^{-k\eta}.
$$
Since here the end points of the quasi-hyperbolic orbit arc are uniformly $\delta-$away from the singularities of $X$, we can apply the shadowing lemma of Liao (Theorem $\ref{Thm:Liao-Shadowing}$): there exists a sequence of periodic points $p_n$ converge to $z$, such that the atomic measure supported on ${\rm Orb}(p_n, X)$ converge to $\mu$. Moreover, the property of shadowing original $(\eta,T)^*$ quasi hyperbolic orbit arcs guarantees that some $q_n\in{\rm Orb}(p_n)$ is $(\eta/2,T,N^s)^*$ contracting and $(\eta/2,T,N^u)^*$ expanding: for $n$ large enough (to eliminate the constant K) and every $k\in\NN$,
$$
\prod_{i=0}^{k-1}\|\psi^*_{T}|_{N^s(\phi^X_{iT}(q_n))}\|\le {\rm e}^{-k\eta/2},
$$
$$
\prod_{i=0}^{k-1}\|\psi^*_{-T}|_{N^u(\phi^X_{-iT}(q_n))}\|\le {\rm e}^{-k\eta/2}.
$$
Then Theorem $\ref{Thm:Inv-Mfd-Size}$ shows that $q_n$ will have a uniform size of local stable and unstable manifolds, which guarantees that for $n$ large enough, periodic orbits ${\rm Orb}(p_n)={\rm Orb}(q_n)$ are mutually homoclinic related, and hence $z\in H(p_n)$.
This finishes the proof of the theorem.
\end{proof}
\begin{Remark}
\begin{enumerate}
\item Theorem E is the first conclusion of this theorem.
\item From the proof, we can see that points in $\Lambda_K$ close to $z$ also belong to $H(P)$. So, $\mu(H(P))>0$. Since $\mu$ is ergodic, $\mu(H(P))=1$, i.e., $\mu$ is supported on $H(P)$. Especially,
$$
\supp(\mu)\subseteq \overline{{\rm Per}(X)}.
$$
\item According to Theorem \ref{Thm:Erg-Measure}, if $\mu$ is a nontrivial ergodic measure of a star vector field, then the measurable entropy of $\mu$ is positive.
\end{enumerate}
\end{Remark}
Applying the description of invariant measures of star flows, we
prove the following  {\bf homogeneous property} for generic star
flows.

\begin{Theorem}\label{Thm:Loc-Homoge}
For a $C^1$ generic star vector field $X$ and any chain recurrent
class $C$ of $X$, there exists a neighborhood $U$ of $C$ such that
all the critical elements contained in $U$ have the same periodic
index with the critical elements contained in $C$.
\end{Theorem}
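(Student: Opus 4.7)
The plan is a proof by contradiction that reduces to Lemma~\ref{Lem:Homogeneous} via a $C^1$-small perturbation. Let $\mathcal{G}\subset \mathcal{X}^*(M^d)$ be the dense $G_\delta$ set obtained by intersecting the residual set $\mathcal{G}_1$ from Theorem~\ref{Thm:Anal-sing} (on which Lemma~\ref{Lem:Homogeneous} holds) with the generic set of Lemma~\ref{Lem:Generic-Property}, and fix $X\in \mathcal{G}$ and a chain recurrent class $C$. If $C\cap \Sing(X)=\emptyset$, the nonsingular star result of \cite{GaW06} says $C$ is a hyperbolic basic set, so by local product structure and shadowing every periodic orbit in a small enough neighborhood of $C$ lies in $C$, and the conclusion follows. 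If $C$ is a single hyperbolic critical element, hyperbolic persistence furnishes the neighborhood immediately. Otherwise $C$ is nontrivial and contains a singularity $\sigma$, and by Lemma~\ref{Lem:Homogeneous} every critical element of $C$ has the common periodic index $s={\rm Ind}_p(\sigma)$.

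Suppose for contradiction that the statement fails: there is a sequence of critical elements $c_n\subset B(C,1/n)$ with ${\rm Ind}_p(c_n)\neq s$. A singularity of $X$ lying in every $B(C,1/n)$ must lie in $C$ and hence have periodic index $s$, so one may assume each $c_n$ is a hyperbolic periodic orbit. The heart of the proof is to construct, for each large $n$, a $C^1$-small perturbation $Y_n\in\mathcal{G}$ such that $\sigma_{Y_n}$ and $c_n(Y_n)$ belong to the same chain recurrent class of $Y_n$. Since $C$ is chain transitive and $\varepsilon_n:=d_H(c_n,C)\to 0$, pick $y_n\in c_n$ and a regular non-periodic point $z_n\in C$ with $d(y_n,z_n)<\varepsilon_n$; inside $C$ there exist chains from $\sigma$ to $z_n$ and from $z_n$ to $\sigma$ with arbitrarily small jumps. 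Combining the Wen--Xia connecting lemma (Lemma~\ref{Lem:WX-Connecting}) applied near $z_n$ with the connecting lemma for chains (Lemma~\ref{Lem:Pseudo-Connecting}), realize both chains as genuine heteroclinic orbits connecting $\sigma$ and $c_n$ after a $C^1$-perturbation of size comparable to $\varepsilon_n$. A further arbitrarily small perturbation places $Y_n\in\mathcal{G}$ without destroying the two connections.

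Then $\sigma_{Y_n}$ and $c_n(Y_n)$ are distinct critical elements of the generic star vector field $Y_n$ in a common (therefore nontrivial) chain recurrent class, so Lemma~\ref{Lem:Homogeneous} applied to $Y_n$ gives ${\rm Ind}_p(c_n(Y_n))={\rm Ind}_p(\sigma_{Y_n})$. Since hyperbolic continuations preserve stable dimensions and Corollary~\ref{Cor:Saddle-Value} guarantees ${\rm sv}(\sigma)\neq 0$ so that ${\rm Ind}_p$ is preserved under $C^1$-small perturbations of $X$, this forces ${\rm Ind}_p(c_n)=s$, a contradiction. The main obstacle is the construction of $Y_n$: the $\varepsilon_n$-gap between $c_n$ and $C$ must be closed by a $C^1$-perturbation of comparable size that produces both heteroclinic connections simultaneously while preserving $\sigma$, the hyperbolicity of $c_n$, and enough of $C$ to locate $z_n$. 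This is handled by localizing the two perturbations in disjoint regions away from a fixed small neighborhood of $\sigma$ and of $c_n$, so the relevant continuations and their periodic indices remain unchanged.
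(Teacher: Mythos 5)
Your proposal takes a genuinely different route from the paper, and it has two gaps that I do not see how to close as written.

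The paper's proof is measure-theoretic and never perturbs $X$. From the hypothetical bad sequence $P_n$ accumulating on $C=C(\sigma)$ with $\operatorname{Ind}_p(P_n)=k\neq\operatorname{Ind}_p(\sigma)$, it takes the weak-$*$ limit of the periodic measures $\mu_n$, uses the continuous function $\xi$ built from Corollary~\ref{Cor:2-Form-Basic-Property} to pass the sectional-contraction estimate to the limit, extracts an ergodic component $\mu$ supported in $C$, and then invokes Theorem~\ref{Thm:Erg-Measure} to produce a periodic orbit $q$ with $\supp(\mu)\cap H(q)\neq\emptyset$ and $\operatorname{Ind}(q)\ge k>\operatorname{Ind}_p(\sigma)$. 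Since $q$ is then a periodic orbit of $X$ itself lying in $C$, this contradicts Lemma~\ref{Lem:Homogeneous} directly, with no connecting lemma and no need to make a perturbed field generic.

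Your perturbative strategy faces a real obstruction at the very first step. You want to realize a heteroclinic cycle between $\sigma$ and $c_n$ by combining Lemma~\ref{Lem:WX-Connecting} near $z_n$ with the connecting lemma for chains. But $c_n\not\subset C$, so a point $y_n\in c_n$ is \emph{not} chain attainable from $\sigma$ for $X$: the chain from $C$ to $y_n$ requires a jump of size on the order of the fixed gap $\varepsilon_n>0$, not arbitrarily small, so Lemma~\ref{Lem:Pseudo-Connecting} does not apply to the pair $(\sigma,y_n)$. Chain transitivity of $C$ only gives you chains \emph{inside} $C$; it does not let you exit $C$ to reach $c_n$. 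The Wen--Xia lemma near $z_n$ would require the positive orbit of a point of $W^u(\sigma)$ to actually enter a small ball around $z_n$ (and the negative orbit of $y_n$ to enter the same ball), and there is no reason for the first of these to hold in general. Contrast this with Sublemma~\ref{Lem:Heteroclinic-Cycle} in the paper, which starts from two critical elements already in the same nontrivial chain class and leans on item~2 of Lemma~\ref{Lem:Generic-Property} (robust coincidence of chain classes) to make the double connection; none of that structure is available to you here precisely because $c_n\notin C$.

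Even granting the cycle, the second step also has a gap: you invoke Lemma~\ref{Lem:Homogeneous} for the perturbed field $Y_n$, but that lemma needs $Y_n$ to lie in the residual set where item~2 of Lemma~\ref{Lem:Generic-Property} holds. The cycle you constructed is, in general, a non-transverse heteroclinic cycle (one of the two connections must be non-transverse by a dimension count), hence it is fragile; pushing $Y_n$ into $\mathcal{G}$ by a further small perturbation can break the connections, and once the connections are broken you no longer know that $\sigma_{Y_n}$ and $c_n(Y_n)$ share a chain class. The correct repair, if you wanted to pursue this route, would be to not invoke Lemma~\ref{Lem:Homogeneous} as a black box but instead to run the two-parameter perturbation $Y_{r,t}$ of its proof directly from the (possibly non-generic) field $Y_n$ carrying the cycle, since that final step uses only the existence of the cycle and the star property. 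As it stands, though, both the construction of the cycle and the appeal to genericity are unjustified.
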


\begin{proof}
For the case where $C$ does not contain any singularities, we refer to $\cite{GaW06}$ which showed that the homogeneous property holds for the nonsingular chain recurrent class of any star vector fields. So we will focus on the case where $C=C(\sigma)$ is nontrivial and the vector field $X$ satisfies the generic properties which will guarantee the conclusion of Lemma $\ref{Lem:Homogeneous}$.

Now we assume that there exists a sequence of periodic orbits $\{P_n\}$ whose Hausdorff limit is contained in $C(\sigma)$, and
$$
{\rm Ind}_p(P_n)={\rm Ind}(P_n)=k\neq{\rm Ind}_p(\sigma).
$$
Without loss of generality, we may assume that $k>{\rm Ind}_p(\sigma)$.

The invariant probability measure $\mu_n$ supported on $P_n$ will converge to an invariant measure $\widetilde{\mu}$ whose support is contained in $C(\sigma)$. Denote by
$$
\xi(x)=\inf_{E\subset T_x M, \dim E=k+1}~\sup_{L\subset E, \dim L=2}\log |\det \left(\Phi_T|_L\right)|.
$$
It is easily seen that $\xi: M\to \mathbb{R}$ is continuous. Since
$$
\int \xi(x){\rm d}\mu_n\le-\eta,
$$
we have
$$
\int \xi(x){\rm d}\widetilde{\mu}\le-\eta.
$$
Then, the Ergodic Decomposition Theorem allows us to find an ergodic invariant measure $\mu$ supported on $C(\sigma)$ which also satisfies the the above estimation
$$
\int \xi(x){\rm d}\mu\le-\eta.
$$
Obviously, $\mu$ could not support on any singularity in $C(\sigma)$.
Theorem $\ref{Thm:Erg-Measure}$ tells us that $\mu$ is hyperbolic with index $\ge k$ and
$\supp(\mu)\cap H(q)\not=\emptyset$ for some periodic point $q$ with index $\ge k$. By the definition of chain recurrent class and homoclinic class, we know that $q\in C(\sigma)$. However, this is impossible because ${\rm Ind}_p(q)\ge k>{\rm Ind}_p(\sigma)$ which contradicts to the conclusion of Lemma $\ref{Lem:Homogeneous}$. This finishes the proof of the theorem.
\end{proof}

Now we can finish the proof of Theorem $\ref{Thm:Sing-Hyper}$ with the help of the description of ergodic measures and the homogeneous property of star vector fields.

\vskip 3mm
\noindent{\bf Proof of Theorem \ref{Thm:Sing-Hyper}.~} We take the dense $G_\delta$ subset ${\mathcal G_2}\subseteq{\mathcal G_1}\subseteq{\cal X}^*(M^d)$ whose elements also satisfy the generic properties stated in Theorem $\ref{Thm:Loc-Homoge}$ and the fourth item of Lemma $\ref{Lem:Generic-Property}$. For any $X\in{\mathcal G_2}$ and a nontrivial chain recurrent class $C(\sigma)$ where $\sigma\in{\rm Sing}(X)$, from Lemma $\ref{Lem:Generic-Property}$ we know that there exists a sequence of periodic orbits $\{Q_n\}$ converge to $C(\sigma)$ in the Hausdorff topology. Without loss of generality, we may assume that ${\rm sv}(\sigma)>0$. By Theorem \ref{Thm:Anal-sing} and the conclusion of Lemma \ref{Lem:Homogeneous}, for any $\rho\in{\Sing}(X)\cap C(\sigma)$, ${\rm sv}(\rho)>0$. Moreover, the homogeneous property and  $W^{ss}(\rho)\cap C(\sigma)=\{\rho\}$ (from Theorem \ref{Thm:Anal-sing}) for any $\rho\in C(\sigma)\cap{\rm Sing}(X)$ guarantees that
$$
{\rm Ind}(Q_n)={\rm Ind}_p(\rho)={\rm dim}E^{ss}(\rho), \qquad \forall \rho\in C(\sigma)\cap{\rm Sing}(X).
$$
This implies $\beta(B^k(C(\sigma)))=C(\sigma)$ (where $k={\rm dim}E^{ss}(\rho)$) and it has a continuous splitting of the extended tangent flow over the compactification of $C(\sigma)$:
$$
\beta^*(T_{C(\sigma)}M^d)\mid_{B^k(C(\sigma))}=N^s\oplus\cP\oplus N^u.
$$
Recall that $\cP$ is the limit of flow line, which is $\Phi_t$-invariant. $N^{s/u}$ are contained in the normal bundle, which is invariant by the extended linear Poincar\'e flow $\psi_t$, and $E^{cs/cu}= N^{s/u}\oplus\cP$ is $\Phi_t$-invariant. Changing the metric if necessary, we can assume that $E^{ss}(\rho)\perp E^{cu}(\rho)$ for any singularity $\rho$. Since $W^{ss}(\rho)\cap C(\sigma)=\{\rho\}$, we know that $\cP\mid_{B^k(\{\rho\})}\subseteq B^k(\{\rho\})\times E^{cu}(\rho)$. Consequently, the domination of the extended linear Poincar\'e flow $N^s\prec N^u$ ensures that
$$
N^s\mid_{B^k(\{\rho\})}=B^k(\{\rho\})\times E^{ss}(\rho), \qquad \forall \rho\in C(\sigma)\cap{\rm Sing}(X).
$$

\begin{Claim}
There exists a mixed dominated splitting $(N^s,\psi_t)\prec(\cP,\Phi_t)$ on $B^k(C(\sigma))$, i.e., there exists $T>0$ such that
$$
\frac{\|\psi_T|_{N^s}\|}{m(\Phi_T|_{\cP})}\le \frac 12.
$$
\end{Claim}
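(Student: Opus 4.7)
My approach is to reformulate the claim as uniform exponential contraction of a single linear cocycle. Since $\cP$ is one-dimensional one has $m(\Phi_T|_\cP)=\|\Phi_T|_\cP\|$, and because every singularity $\rho$ appearing in $B^k(C(\sigma))$ is hyperbolic, the scalar cocycle $|\Phi_t|_\cP|$ is continuous and nowhere vanishing on $B^k(C(\sigma))$. Hence $\widetilde\psi_t := \psi_t/|\Phi_t|_\cP|$ is a well-defined continuous linear cocycle on the $k$-dimensional bundle $N^s$ over the compact $\Phi_t$-invariant set $B^k(C(\sigma))$; at regular points it agrees with the scaled linear Poincar\'e flow $\psi^*_t$. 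The desired estimate is exactly $\|\widetilde\psi_T|_{N^s}\|\le 1/2$ for some $T>0$.

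To obtain this I apply the standard criterion that a continuous linear cocycle on a compact invariant set is uniformly exponentially contracting provided its top Lyapunov exponent is strictly negative for every ergodic invariant probability measure; this follows from weak-$*$ compactness of the space of invariant probabilities and upper semi-continuity of the top Lyapunov exponent. Thus it suffices to check $\chi^+(\widetilde\psi|_{N^s},\mu)<0$ for every $\Phi_t$-ergodic probability $\mu$ on $B^k(C(\sigma))$. I split into two cases according to the push-forward $\bar\mu := \beta_*\mu$ on $C(\sigma)$.

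Suppose first $\bar\mu$ is concentrated at a singularity $\rho\in C(\sigma)$. Then $\mu$ lives on the $\Phi_t$-invariant subset of $G^1(\rho)$ consisting of directions $L\subset E^{cu}(\rho)$, and any such ergodic $\mu$ is supported on a single real $DX(\rho)$-invariant subspace of $E^{cu}(\rho)$, with associated exponent $\lambda\ge\lambda_s(\rho)$. Using the already-established identities $N^s(L)=E^{ss}(\rho)$ and $E^{ss}(\rho)\perp E^{cu}(\rho)$, one computes $\widetilde\psi_t|_{N^s(L)} = \Phi_t|_{E^{ss}(\rho)}/\|\Phi_t|_L\|$, whence $\chi^+(\widetilde\psi|_{N^s},\mu)\le \lambda_{s-1}(\rho)-\lambda_s(\rho)<0$ by Theorem \ref{Thm:Anal-sing}. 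If instead $\bar\mu$ gives no mass to $\Sing(X)$, Theorem \ref{Thm:Erg-Measure} gives that $\bar\mu$ is hyperbolic and $\supp(\bar\mu)\cap H(P)\neq\emptyset$ for some periodic $P$ with ${\rm Ind}(P)=\ind(\bar\mu)$. Chain transitivity of $H(P)$ forces $P\subset C(\sigma)$, and Theorem \ref{Thm:Loc-Homoge} then forces ${\rm Ind}(P)=k$, i.e.\ $\bar\mu$ has exactly $k$ negative Lyapunov exponents. Since the flow-direction exponent is zero $\bar\mu$-a.e.\ and the $\psi_t$-dominated splitting $N^s\oplus N^u$ has index $k$, uniqueness of dominated splittings of prescribed index forces $N^s$ to agree $\bar\mu$-a.e.\ with the Oseledets subspace of the $k$ most-contracting directions of the linear Poincar\'e flow; hence $\chi^+(\widetilde\psi|_{N^s},\mu)$ equals the largest of those $k$ negative exponents, which is strictly negative.

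The main obstacle is precisely this last identification in the non-singular case: one must show that the continuous bundle $N^s$, produced as a Hausdorff limit of stable bundles of periodic orbits of index $k$ accumulating on $C(\sigma)$, realizes the Oseledets $k$ most-contracting directions for every ergodic $\bar\mu$ supported in $C(\sigma)$. Without the homogeneous property (Theorem \ref{Thm:Loc-Homoge}) there might exist an ergodic $\bar\mu$ whose hyperbolic index differs from $k$, so that $N^s$ would mix stable with neutral or unstable Oseledets blocks and the top exponent estimate would break down; combining Theorem \ref{Thm:Erg-Measure} with Theorem \ref{Thm:Loc-Homoge} is exactly what rules this out.
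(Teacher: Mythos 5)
Your proposal is correct and takes essentially the same route as the paper: both reduce the uniform contraction of $\psi^*_t|_{N^s}$ to an ergodic-measure criterion over the compact set $B^k(C(\sigma))$, handle measures concentrated at a singularity $\rho$ via the identities $N^s|_{B^k(\{\rho\})}=E^{ss}(\rho)$ and $\cP|_{B^k(\{\rho\})}\subset E^{cu}(\rho)$ (giving $\chi^+\le\lambda_{s-1}(\rho)-\lambda_s(\rho)<0$), and in the non-atomic case combine Theorem \ref{Thm:Erg-Measure} with the homogeneity of periodic index (Lemma \ref{Lem:Homogeneous} / Theorem \ref{Thm:Loc-Homoge}) to rule out a nonnegative top exponent on $N^s$. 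The paper phrases the ergodic-measure step as a proof by contradiction while you verify negativity directly for every ergodic measure, but the ingredients and structure are identical.
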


\begin{proof}[Proof of Claim:]
The claim is equivalent to say that the scaled linear Poincar\'e flow $\psi^*_t$ restricted on $N^s$ is uniformly contracting. If it is not uniformly contracting, then there exists an ergodic invariant measure $\mu$ whose support is contained in $C(\sigma)$ such that
$$
\int \log\parallel\psi^*_T\mid_{N^s(x)}\parallel {\rm d}{\mu}(x)\geq0.
$$
It is easy to see that the push-forward measure $\beta_*(\mu)$ on $M$ can not to be the atomic measure at singularity since $\cP\mid_{B^k(\{\rho\})}\subseteq B^k(\{\rho\})\times E^{cu}(\rho)$ and $N^s\mid_{B^k(\{\rho\})}=B^k(\{\rho\})\times E^{ss}(\rho)$. So, the above inequality is also satisfied for the measure $\beta_*(\mu)$ on $M$. Moreover, the inequality implies that the dimension of invariant subspace associated to negative Lyapunov exponents of (the hyperbolic measure) $\beta_*(\mu)$ is less than $k$. Theorem \ref{Thm:Erg-Measure} tells us that $\supp(\beta_*(\mu))\cap H(P)\not=\emptyset$ for some periodic orbit $P$ with index less than $k$. This contradicts to the homogeneous property stated in Lemma $\ref{Lem:Homogeneous}$. This finishes the proof of the claim.
\end{proof}

Since $\cP\mid_{B^k(\{\rho\})}\subseteq B^k(\{\rho\})\times E^{cu}(\rho)$, a similar proof as the above claim shows that $N^s$ is uniformly contracting with respect to $\psi_t$.

The rest part of the proof is to show that $\Phi_t$ admits a partially hyperbolic splitting over $T_{C(\sigma)}M$. This is almost exactly the same as the proof Theorem A in $\cite{LGW05}$, and we just sketch the proof for the convenience of reader. By Lemma $\ref{Lem:Basic-Property}$ and the claim we have
$$
(N^s,\psi_t)\prec(N^u,\psi_t)\qquad {\rm and} \qquad (N^s,\psi_t)\prec(\cP,\Phi_t).
$$
According to Lemma 5.5 of $\cite{LGW05}$ (see also $\cite{BDP03}$, Lemma 4.4) the above dominations imply that  we have the mixing dominated splitting $(N^s, \psi_t)\prec_{T_0}(E^{cu},\Phi_t)$ for some $T_0>0$. So the linear bundle map
$$
\Phi_{T_0} : \beta^*(TM)\mid_{B^k(C(\sigma))}\rightarrow\beta^*(TM)\mid_{B^k(C(\sigma))},
$$
can be expressed as
$$
\Phi_{T_0}=\left(
            \begin{array}{cc}
              A & 0 \\
              C & D \\
            \end{array}
          \right): N^s\oplus E^{cu}\rightarrow N^s\oplus E^{cu},
$$
where $A=\psi_{T_0}\mid_{N^s}$, $D=\Phi_{T_0}\mid_{E^{cu}}$. Moreover the mixed domination $(N^s, \psi_t)\prec_{T_0}(E^{cu},\Phi_t)$ implies that
$$
\frac{\|A\|}{m(D)}\le \frac 12.
$$
Then the calculation in $\cite[Lemma \ 5.6]{LGW05}$ tells us there exists a $\Phi_{T_0}$-invariant subbundle, denoted by $E^{ss}$. This give a continuous $\Phi_{T_0}$-invariant splitting
$$
\beta^*(TM)\mid_{B^k(C(\sigma))}=E^{ss}\oplus E^{cu}.
$$
The compactness of $B^k(C(\sigma))$ and continuity of this invariant splitting guarantees there exists some constant $C>0$, such that
$$
\parallel\Phi_{T_0}\mid_{E^{ss}}\parallel\leq C\parallel\psi_{T_0}\mid_{N^s}\parallel.
$$
Finally, for any $t>0$, $E^{ss}\oplus E^{cu}$ is a $\Phi_t$-invariant dominated splitting by the uniqueness of dominated splitting. And this splitting induces a $\Phi_t$-invariant dominated splitting $E^{ss}\oplus E^{cu}$ on $T_{C(\sigma)}M$, and $E^{ss}$ is uniformly contracting with respect to $\Phi_t$ since $N^s$ is uniformly contracting with respect to $\psi_t$.

Now we have proved that
$$
T_{C(\sigma)}M=E^{ss}\oplus E^{cu}
$$
is a $\Phi_t$-invariant partially hyperbolic splitting. For the
singular hyperbolicity, we only need to show that
$\Phi_t\mid_{E^{cu}}$ is sectional expanding. This is exactly the
same as the proof of the claim. If it is not, then we can find an
ergodic measure on $C(\sigma)$ such that its dimension of stable
bundle is larger than $k={\rm Ind}_p(\sigma)$. The fact that the
saddle values of all the singularities contained in $C(\sigma)$ are
larger than 0 excludes the possibility that this measure is an
atomic measure at any singularity. Then Theorem
$\ref{Thm:Erg-Measure}$ allows us to find a periodic orbit contained
in $C(\sigma)$ whose index is larger than $k$. This contradicts the
homogeneous property of $X$, and finishes the proof of this theorem.
\hfill $\Box$

\bigskip
\noindent
Yi Shi $<$polarbearsy@gmail.com$>$: School of Mathematical Sciences, Peking University, Beijing 100871, China {\em $\&$}
\\
Institut de Math\'ematiques de Bourgogne, Universit\'e de Bourgogne, Dijon 21000, France\\

\noindent
Shaobo Gan $<$gansb@pku.edu.cn$>$: School of Mathematical Sciences, Peking University, Beijing 100871, China\\

\noindent
Lan Wen $<$lwen@math.pku.edu.cn$>$: School of Mathematical Sciences, Peking University, Beijing 100871, China\\

\end{document}